\DeclareMathOperator{\mesh}{mesh}
\DeclareMathOperator{\diam}{diam}
\DeclareMathOperator{\Comp}{Comp}
\newcommand{\R}{\mathbb{R}}
\newcommand{\Z}{\mathbb{Z}}
\newcommand{\D}{\mathbb{D}}
\newcommand{\N}{\mathbb{N}}
\newtheorem{theorem}{Theorem}[section]
\newtheorem{definition}[theorem]{Definition}
\newtheorem{lemma}[theorem]{Lemma}
\newtheorem{proposition}[theorem]{Proposition}
\newtheorem{corollary}[theorem]{Corollary}
\newtheorem{remark}{Remark}
\title{Thermodynamic formalism for coarse expanding dynamical systems}
\author[T. Das]{Tushar Das}
\address{Department of Mathematics and Statistics,  University of Wisconsin - La Crosse, 1725 State Street,
La Crosse, WI 54601, USA}
\email{tdas@uwlax.edu}
\author[F. Przytycki]{Feliks Przytycki}
\address{Institute of Mathematics, Polish Academy of Sciences, ul. \'Sniadeckich 8, 00-656 Warszawa, Poland}
\email{feliksp@impan.pl}
\author[G. Tiozzo]{Giulio Tiozzo}
\address{Department of Mathematics, University of Toronto,  40 St George St, Toronto, ON M5S 2E4, Canada}
\email{tiozzo@math.utoronto.ca}
\author[M. Urba\'nski]{Mariusz Urba\'nski}
\address{Department of Mathematics, University of North Texas,  1155
  Union Circle \#311430, Denton, TX 76203-5017, USA}
  \email{urbanski@unt.edu}
\author[A. Zdunik]{Anna Zdunik}
\address{Institute of Mathematics, University of Warsaw, Banacha 2, 02-097 Warszawa, Poland}
\email{a.zdunik@mimuw.edu.pl}
\date{\today}
\begin{document}

\begin{abstract}
We consider a class of dynamical systems, which we call \emph{weakly coarse expanding}, which is a generalization 
to the postcritically infinite case of \emph{expanding Thurston} maps as discussed by Bonk--Meyer and
is closely related to \emph{coarse expanding conformal} systems as defined by Ha\"issinsky--Pilgrim.
We prove existence and uniqueness of equilibrium states for a wide class of potentials,
as well as statistical laws such as a central limit theorem, law of iterated logarithm,
exponential decay of correlations and a large deviation principle.
Further, if the system is defined on the $2$-sphere, we prove all such results even in presence of periodic (repelling) branch points.
\end{abstract}

\maketitle

\section{Introduction}

The study of dynamical systems through thermodynamic formalism goes back to Sinai \cite{Sinai}, Bowen \cite{bowen}, and Ruelle \cite{Ruelle}.
The paradigmatic example in this theory is the one-sided shift map on the space of sequences on a finite alphabet. This space is endowed with a natural
metric under which the shift is uniformly expanding. This feature is essential in most applications: indeed, various systems which possess a certain degree of expansion can be encoded by a shift of finite type (see \cite{PU}), and this technique has allowed to establish statistical laws for a large class of dynamical systems.

For instance, several authors (see \cite{Przytycki-ICM2018} and references therein) have addressed the case of rational maps, i.e. holomorphic endomorphisms of the $2$-sphere, and established statistical laws for them.

A related class of examples of continuous self-maps of the $2$-sphere is represented by \emph{expanding Thurston maps}.
Such maps are postcritically finite, and can be described by a finite set of ``cut and fold" rules \cite{CFP}; note that they need not be
Thurston equivalent to holomorphic maps. These maps are discussed by Bonk--Meyer \cite{BM}, who among other things construct the measure of maximal
entropy. In a series of papers (\cite{Li1}, \cite{Li2}, \cite{Li3}, \cite{li-book}), Li works out the thermodynamic formalism for expanding Thurston maps, with respect to the visual metric.

In a similar vein, Ha\"issinsky--Pilgrim \cite{HP} define more generally the concept of \emph{coarse expanding conformal} (cxc) system, developing
in an axiomatic way the notion of expansion for maps of general metric spaces. In particular, they prove for cxc systems existence and uniqueness of the measure of maximal entropy.

The goal of this paper is to develop the thermodynamic formalism (in particular, prove existence and uniqueness of equilibrium states and statistical laws) for a general class of dynamical systems, which we call \emph{weakly coarse expanding}.
These systems are continuous finite branched coverings of locally connected topological spaces which are expanding in a weak metric sense, and generalize most cases discussed by \cite{BM} and \cite{HP}. In particular, they need not be postcritically finite, and the metric we consider need not be the visual metric, but it could be more generally an exponentially contracting metric (see Section \ref{S:def} for the definitions). Let us stress that neither conformality, nor holomorphy or smoothness is assumed in this paper. In particular, periodic branch points may be repelling despite the local degree at them being bigger than $1$.

Our main results are contained in the following two theorems.

\begin{theorem} \label{T:main}
Let $f : W_1 \to W_0$ be a weakly coarse expanding dynamical system without periodic critical points, let $X$ be its repellor, $\rho$ an exponentially contracting metric on $X$ compatible with the topology, and let
$\varphi : (X, \rho) \to \mathbb{R}$ be a H\"older continuous function. Then:
\begin{enumerate}
\item there exists a unique equilibrium state $\mu_\varphi$ for $\varphi$ on $X$.
\item[] Let $\psi : (X, \rho) \to \mathbb{R}$ be a H\"older continuous observable, and denote
$$S_n \psi(x) := \sum_{k = 0}^{n-1} \psi(f^k(x)).$$
Then there exists the finite limit
$$\sigma^2 := \lim_{n \to \infty} \frac{1}{n} \int_X \left( S_n \psi(x) - n \int \psi \ d \mu_\varphi \right)^2 d \mu_\varphi \geq 0$$
such that the following statistical laws hold:
\item (Central Limit Theorem, CLT)
If $\sigma > 0$, we have for any $a < b$
$$
\mu_\varphi\left(\left\{ x \in X \ : \ \frac{S_n \psi(x)  - n \int_X \psi \ d \mu_\varphi}{\sqrt{n}} \in [a, b]\right\} \right) \longrightarrow \frac{1}{\sqrt{2 \pi \sigma^2}} \int_a^b e^{-t^2/2\sigma^2 } \ dt$$
as $n \to \infty$.
If $\sigma = 0$, one has convergence in probability to the Dirac $\delta$-mass at $0$.

\item (Law of Iterated Logarithm, LIL)
For $\mu_\varphi$-a.e. $x \in X$,
$$\limsup_{n \to \infty} \frac{S_n \psi(x) - n \int_X \psi \ d \mu_\varphi}{\sqrt{n \log \log n}} = \sqrt{2 \sigma^2}.$$

\item (Exponential Decay of Correlations, EDC)
There exist constants $\alpha > 0$ and $C \geq 0$ such that for any $\mu_\varphi$--integrable function $\chi : X \to \mathbb{R}$,
for any $\beta$-H\"older function $\psi :  X \to \mathbb{R}$, and for any $n \geq 0$,
$$\left| \int_X \psi \cdot (\chi \circ f^n) \ d\mu_\varphi  - \int_X \psi \ d\mu_\varphi \cdot \int_X \chi \ d\mu_\varphi \right| \leq C e^{-n \alpha} \Vert \underline{\chi}  \Vert_1 \cdot \Vert \underline{\psi} \Vert_\beta,$$
where $\underline{\chi} := \chi - \int_X \chi \ d\mu_\varphi$, and $\Vert \cdot \Vert_\beta$ is the $\beta$-H\"older norm.

\item (Large Deviations, LD) For every $t \in\R$, we have that
$$
\begin{aligned}
\lim_{n\to\infty}\frac1n \log\mu_\varphi\bigg(\Big\{x\in X: {\rm sgn}(t)S_n \psi(x)&\ge {\rm sgn}(t) n \int_X\psi\, d\mu_{\varphi+t\psi}\Big\}\bigg) \\
&= -t\int_X\psi\, d\mu_{\varphi+t\psi}+P_{top}(\varphi+t\psi)-P_{top}(\varphi).
\end{aligned}
$$
\item \label{itm:sigma0}
Moreover,
$\sigma = 0$ if and only if there exists a
continuous $u : X \to \mathbb{R}$ such that
$$\psi - \int_X \psi \ d \mu_\varphi = u \circ f - u.$$

\item \label{itm:cohom}
Finally, $\mu_{\varphi_1} = \mu_{\varphi_2}$ if and only if there exist $K \in \mathbb{R}$ and a continuous $u : X \to \mathbb{R}$
such that
$$\varphi_1 - \varphi_2 = u \circ f - u + K.$$
In \eqref{itm:sigma0} and \eqref{itm:cohom}, the function $u$ is H\"older continuous with respect to a visual metric.
\end{enumerate}

\end{theorem}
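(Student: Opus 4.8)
The plan is to establish Theorem~\ref{T:main} by constructing a symbolic model: a subshift of finite type (or a Markov tower) that encodes the dynamics of $f$ on its repellor $X$, and then transporting the classical thermodynamic formalism of H\"older potentials on subshifts back to $X$. Concretely, I would first exploit the weak coarse expansion hypothesis to build a sequence of finite open covers $\mathcal{U}_n$ of $X$ by ``tiles'' (preimages under $f^n$ of the elements of a fixed cover $\mathcal{U}_0$), whose mesh shrinks exponentially in the exponentially contracting metric $\rho$; the absence of periodic critical points is what guarantees that the combinatorics of how tiles of level $n+1$ sit inside tiles of level $n$ stabilizes, so that one obtains a well-behaved coding map $\pi : \Sigma \to X$ from a one-sided topologically mixing subshift of finite type $\Sigma$, with $\pi \circ \sigma = f \circ \pi$ and $\pi$ finite-to-one (injective off a set of measure zero). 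The key geometric estimates are: (i) bounded geometry / bounded overlap of the tiles, and (ii) a Koebe-type distortion or H\"older-to-H\"older transfer statement ensuring that $\varphi \circ \pi$ is H\"older on $\Sigma$ with the natural metric.

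Once the symbolic model is in place, parts (1)--(5) follow from the standard Ruelle--Perron--Frobenius machinery on subshifts of finite type: for a H\"older potential $\widetilde{\varphi} = \varphi \circ \pi$ there is a unique equilibrium state $\widetilde{\mu}$, a spectral gap for the transfer operator on the space of H\"older functions, hence the CLT, the LIL, exponential decay of correlations, and the large deviation principle (Lanford--Ruelle, Bowen, the work of Parry--Pollicott, and for the LD part e.g. the G\"artner--Ellis theorem applied to the pressure function $t \mapsto P_{top}(\varphi + t\psi)$, whose analyticity comes from the spectral gap). I would then push these down via $\pi$: setting $\mu_\varphi = \pi_* \widetilde{\mu}$, uniqueness on $X$ follows because any equilibrium state on $X$ lifts to $\Sigma$ (using finiteness of $\pi$ and the variational principle, so that entropy and integrals are preserved), and the statistical laws for $\psi$ on $X$ are literally the statistical laws for $\psi \circ \pi$ on $\Sigma$ since $\pi$ is a measure-theoretic isomorphism for $\widetilde{\mu}$. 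The variance $\sigma^2$ is the usual Green--Kubo sum $\sum_{k \in \mathbb{Z}} \int (\psi - \int\psi)\,(\psi\circ f^{|k|} - \int\psi)\, d\mu_\varphi$, finite by EDC, and its Cesàro representation is the classical identity.

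For the coboundary characterizations~\eqref{itm:sigma0} and~\eqref{itm:cohom}, I would argue on $\Sigma$ first: $\sigma = 0$ iff $\psi \circ \pi - \int \psi\, d\widetilde{\mu}$ is a H\"older coboundary for $\sigma$, by the Livšic-type theory (a H\"older function with zero variance is cohomologous to a constant, with H\"older transfer function given by a convergent series along backward orbits); similarly $\mu_{\varphi_1} = \mu_{\varphi_2}$ iff $\varphi_1 \circ \pi - \varphi_2 \circ \pi$ is cohomologous to a constant, since two H\"older potentials have the same equilibrium state iff they differ by a coboundary plus a constant. The remaining point is to descend the transfer function $\widetilde{u}$ on $\Sigma$ to a genuine function $u$ on $X$: this requires that $\widetilde{u}$ be constant on the fibers of $\pi$, which one checks using the cohomological equation together with the density of (pre)periodic orbits and the fact that the cohomology relation is forced on the identifications made by $\pi$; the resulting $u$ is then H\"older with respect to the visual metric because on each tile the coding is bi-H\"older. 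The main obstacle I anticipate is precisely the construction and control of the Markov partition / coding map in the coarse, non-conformal setting --- establishing bounded overlap, exponential mesh decay, and the H\"older transfer of potentials without any smoothness or conformality, and handling the fact that tile boundaries may be topologically complicated --- together with the verification that $\pi$ is essentially injective so that entropy and pressure are genuinely preserved when passing between $X$ and $\Sigma$; everything downstream is then a reasonably routine, if lengthy, application of known subshift results.
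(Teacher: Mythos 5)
There is a genuine gap, and it sits exactly at the point you yourself flag as ``the main obstacle.'' Your whole plan hinges on producing a Markov partition / subshift of finite type coding $\pi:\Sigma\to X$ that is finite-to-one and injective off a set of $\widetilde\mu$-measure zero, so that entropy and pressure are automatically preserved and the transfer function can be descended fiberwise. In the weakly coarse expanding setting (no conformality, no smoothness, postcritically infinite, tile boundaries of unknown regularity) you give no mechanism for constructing such a partition, for verifying the Markov property, or for showing the boundary set is null for an arbitrary equilibrium state; none of this is routine, and it is precisely what the paper avoids. The paper instead uses a geometric coding tree (in the sense of Przytycki) to build a H\"older semiconjugacy from the \emph{full} shift on $d=\deg f$ symbols; this $\pi$ is surjective but is \emph{not} finite-to-one in general (the paper explicitly notes that $\pi^{-1}(p)$ can be infinite). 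The substitute for essential injectivity is the ``no entropy drop'' lemma: one defines $\epsilon$-singular times along orbits and shows, using the hypothesis that no critical point is periodic, that their number along an orbit segment of length $n$ is at most $\zeta n+p$ for arbitrarily small $\zeta$; this bounds the number of depth-$n$ cylinders meeting any fiber subexponentially, hence $h_\mu(\sigma)=h_{\pi_\star\mu}(f)$ for every invariant $\mu$ on $\Sigma$. So your assertion that the absence of periodic critical points ``guarantees that the combinatorics of tiles stabilizes so that $\pi$ is finite-to-one'' misattributes the role of that hypothesis, and the uniqueness/pressure-preservation step in your argument has no support once finite-to-oneness is dropped. (Lifting equilibrium states from $X$ to $\Sigma$ also needs an argument --- the paper uses a Riesz extension/averaging construction --- rather than ``finiteness of $\pi$''.)

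The second gap is in (6)--(7). Descending the symbolic transfer function $\widetilde u$ to $X$ by ``checking that $\widetilde u$ is constant on fibers of $\pi$'' has no proof sketch, and with infinite fibers and a coding that is not essentially injective it is not clear how you would carry it out. The paper takes a different route: from equality of the lifted equilibrium states it gets uniform boundedness of Birkhoff sums $S_n\eta$ on all of $X$ (via surjectivity of $\pi$ and the Gibbs property on $\Sigma$), deduces $S_k\eta(p)=0$ at every periodic point, and then constructs $u$ directly on $X$ along a dense forward orbit, proving H\"older continuity in the \emph{visual} metric by a closing-type lemma that uses the uniform expansion of visual balls to shadow a pseudo-orbit by a genuine periodic orbit. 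Your spectral-gap/Liv\v{s}ic outline on $\Sigma$ is fine as far as it goes, but without either essential injectivity or this direct construction on $X$, the passage from a coboundary on $\Sigma$ to a continuous $u$ on $X$ is unproven. The parts of your proposal that do match the paper --- pushing statistical laws (CLT, LIL, EDC, LD) from the shift down through the semiconjugacy --- are correct in spirit and essentially identical to what the paper does once the coding and the no-entropy-drop lemma are in place.
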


Just as in classical holomorphic dynamics, a special role is played by \emph{critical points}, i.e. points where the map is not locally injective, also
called \emph{branch points}.
A major source of difficulty in the study of weakly coarse expanding systems is the presence of periodic critical points in the repellor
(indeed, those systems do not satisfy the [Degree] condition, hence they are not coarse expanding conformal in the sense of \cite{HP}).

Our second result addresses this issue in case the underlying space is an open subset of the $2$-sphere
(which is the case considered by \cite{BM} and \cite{li-book}); there, our results also hold in the presence of periodic critical points as follows.

\begin{theorem} \label{T:main-sphere}
If $f : W_1 \to W_0$ is a weakly coarse expanding system and $W_0 \subseteq S^2$ 
 is an open subset of the $2$-sphere, with the Euclidean topology, then all claims (1)-(2)-(3)-(4)-(5)-(6)-(7) of Theorem \ref{T:main} hold even if there are periodic critical points.
\end{theorem}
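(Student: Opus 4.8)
The plan is to bootstrap from Theorem \ref{T:main} by reducing the case with periodic critical points to the case without them. The key observation is that on the $2$-sphere the branch locus of a finite branched covering is discrete, hence the set of periodic critical points is finite; moreover, since the system is expanding, each such periodic critical cycle is repelling and isolated in the repellor $X$. I would first show that one can remove a small neighborhood of each periodic critical point and pass to an induced (first-return) system on the complement, using the fact that the expansion forces almost every orbit to spend a controlled, summable amount of time near these finitely many bad points. Concretely, I would construct a ``puncturing'' or ``blow-up'' of $X$ along the periodic critical cycles: replace each periodic critical point by a suitable local model (e.g.\ pass to a branched cover, or to an inverse limit / natural extension) on which the map becomes a local homeomorphism, check that the resulting system is genuinely weakly coarse expanding \emph{without} periodic critical points and that the induced metric is still exponentially contracting, and then quote Theorem \ref{T:main} verbatim there.

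The second step is to transfer the conclusions back down along the semiconjugacy $\pi$ from the modified system to the original one. Since the modification only affects a finite set and its preimages, $\pi$ is a homeomorphism off a countable set, so it induces a bijection on non-atomic invariant measures and preserves entropy and the integral $\int \varphi\,d\mu$; hence equilibrium states and the topological pressure $P_{top}(\varphi)$ match, giving claims (1) and (5). For the statistical laws (2)--(4), I would check that a H\"older potential or observable $\psi$ on $X$ (with respect to the visual metric) pulls back to a H\"older observable upstairs — here one must be careful because the local model near a critical point of local degree $d$ distorts the metric like $z\mapsto z^{d}$, so H\"older regularity is preserved only after reparametrizing the metric, which is exactly why the statement restricts the cohomological functions $u$ in (6)--(7) to be H\"older with respect to the visual metric. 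The CLT, LIL and EDC then follow because $\sigma^2$, being defined by an $L^2$ limit, is insensitive to the countable exceptional set, and EDC for $\mu_\varphi$-integrable $\chi$ transfers directly. Claims (6) and (7) are cohomological statements about continuous coboundaries, which again descend because a continuous coboundary upstairs pushes to a continuous coboundary downstairs and vice versa.

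An alternative and perhaps cleaner route avoids blow-ups entirely: one works directly on $X$, and handles the finitely many periodic critical points by a Schauder/tightness argument for the transfer operator $\mathcal{L}_\varphi$, showing that although the local degree is $>1$ at these points, their contribution to $\mathcal{L}_\varphi \mathbf{1}$ is a bounded perturbation because they are isolated and repelling; the spectral gap of $\mathcal{L}_\varphi$ on the H\"older space (which drives all of (1)--(7) in the proof of Theorem \ref{T:main}) then survives. In this approach the main point is a \emph{localization estimate}: near a periodic critical point $c$ of period $p$ and local degree $d$, the $p$-th iterate looks like $z\mapsto \lambda z^{d}$ with $|\lambda|>1$, and one shows the associated local transfer operator still contracts H\"older norms because the $d$ inverse branches, while merging at $c$, are uniformly contracting away from it and the weight $e^{\varphi}$ is bounded. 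I would pick whichever of these two is shorter given the machinery already set up for Theorem \ref{T:main}.

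The hard part will be the regularity bookkeeping at the periodic critical points: one must verify that the visual (exponentially contracting) metric, the H\"older class of potentials, and the distortion estimates for inverse branches all remain compatible when inverse branches coalesce at a branch point of local degree $d>1$. In the coarse expanding conformal setting of \cite{HP} this is precisely what the [Degree] axiom is used to control and exclude; here I would need a substitute — a quantitative bound on how H\"older norms of $\mathcal{L}_\varphi$-iterates grow (or rather fail to grow) near these finitely many points — and this is where the $2$-sphere hypothesis is genuinely used, since it guarantees the branch locus is discrete and the local model is the standard power map, pinning down the distortion exactly. Everything else — transferring measures, entropy, pressure, and the $L^2$-based statistical laws across a countable exceptional set — is routine once that localization estimate is in hand.
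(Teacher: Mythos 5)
Your main route --- blow up the grand orbit of the periodic critical points to circles, obtain a weakly coarse expanding system with no periodic critical points, apply Theorem \ref{T:main} there, and transfer back along the blowdown --- is indeed the paper's strategy (Section \ref{S:periodic-crit}), but the two steps you treat as routine verification are precisely where the content lies, and as written they have gaps. There is no ``induced metric'' on the blown-up space: the pullback of $\rho$ under the blowdown $\pi$ is only a pseudometric, constant on each circle $S_q$, so an exponentially contracting metric on the new repellor $Y$ must be built from scratch; the paper does this by augmenting the lifted cover with two overlapping arcs $V_0,V_1$ around $S_p$ and verifying the hypotheses of Frink's lemma (Lemma \ref{L:Frink-lemma}) via an angular-oscillation estimate for pullbacks passing near the cycle (or, alternatively, by the explicit metric of Appendix A). Relatedly, your assertion that the periodic critical cycles are ``isolated in the repellor'' is false in general --- $X$ is perfect, and the paper even shows that $S_p$ lies in the closure of the lift of $X\setminus\{p\}$ --- and your fix for the H\"older-regularity problem (``reparametrize the metric'') is not an argument; the paper's fix is the class of topologically H\"older functions (Section \ref{S:top-holder}), which makes $\varphi\circ\pi\circ\Pi$ H\"older on $\Sigma$ regardless of how the metric is distorted at the blow-up.

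The measure transfer is also more than ``a bijection on non-atomic invariant measures'': non-atomic invariant measures upstairs can charge the circles (Lebesgue measure on $S_p$ is invariant under $\theta\mapsto d\theta$) and push down to $\delta_p$, while a priori an equilibrium state downstairs could sit on the grand orbit of the cycle. The paper needs two separate arguments (Proposition \ref{P:unique-blowup}): the unique equilibrium state on $\Sigma$ is Gibbs, hence positive on open sets, hence by ergodicity gives no mass to the preimages of the circles, so entropy is preserved by $\Pi$ (Lemma \ref{L:no-drop}, available only because the blow-up removed periodic criticality) and by $\pi$ (a measurable isomorphism off the circles); and, conversely, every equilibrium state on $X$ lifts to an invariant measure on $\Sigma$ with no loss of entropy or integral, forcing it to be the projection of that Gibbs state. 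Neither half follows from the off-a-countable-set homeomorphism alone. Finally, your alternative ``spectral gap directly on $X$'' route is exactly what the blow-up is designed to circumvent: with a periodic branch point the inverse-branch/distortion estimates behind the gap degenerate, and you offer no substitute localization estimate, so it should not be presented as an interchangeable option.
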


Examples of weakly coarse expanding systems are given, other than in \cite{BM} and \cite{HP}, in \cite{HP-ex};
in particular, iterated function systems \cite{ERS}, skew products induced by some finitely generated semigroups of rational functions, in particular those given by Examples 17.1 and 17.2 in \cite{ASU}, maps on Sierpi\'nski carpets and gaskets, as well as expanding \emph{polymodials}
(\cite{BCM1}, \cite{BCM2}).

For holomorphic examples, note there is an abundance of non-hyperbolic, non-postcritically finite rational maps $f:\widehat{\mathbb{C}} \to \widehat{\mathbb{C}}$ which have recurrent critical points in the Julia set $X=J(f)$ and satisfy backward exponential contraction (as in Theorem \ref{T:exp-contr}) for the Riemann metric $\rho$. This contraction condition is equivalent to the so-called
\emph{Topological Collet-Eckmann (TCE)} condition  and is satisfied e.g. for a Lebesgue-positive measure set of non-hyperbolic recurrent real parameters $c$ for the family $f_c(z)=z^2+c$ (see \cite{CE}, \cite{Tsujii} and \cite{Przytycki-ICM2018}).
These maps are weakly coarse expanding but not topologically cxc, since the [Degree] condition fails because of recurrence of the critical point $z = 0$.

For rational maps $f:\widehat{\mathbb{C}} \to \widehat{\mathbb{C}}$ satisfying TCE, all the assertions (1)-(7) of Theorem \ref{T:main}
hold for all H\"older potentials $\varphi: J(f) \to \mathbb{R}$, see \cite[Corollary 1.2]{InoRiv} and \cite{ComRiv}.
In fact the claims (1)-(7) hold for all (not only TCE) rational maps $f$ provided $\varphi$ satisfies $P(f^n, S_n(\varphi))>n \sup \varphi$ for some $n\in\mathbb{N}$. See \cite[Section 3]{Przytycki-ICM2018} for references.

\smallskip
Note that all results in Theorems~\ref{T:main} and \ref{T:main-sphere} work for $\varphi$ and $\psi$ lying in the slightly more general class of \emph{topologically H\"older} functions, as defined in Section \ref{S:top-holder}.
Let us also remark that our proofs follow a different, and in a way simpler, route than \cite{li-book}: indeed, there the author directly applies the analytic methods
of \cite{PU} on the metric space $X$. In the present paper, on the other hand, we first encode the dynamics of the system by a geometric coding tree,
in the footsteps of \cite{Feliks}, obtaining a semiconjugacy with the shift space; then, our statistical laws follow easily from the corresponding ones
for the shift space.
As for the second part (proof of Theorem \ref{T:main-sphere}), our technique consists in blowing up periodic critical points (and their grand orbit)
to circles. The new space obtained is a Sierpi\'nski carpet, and the dynamics becomes truly coarse expanding (without periodic critical points) on it,
hence we can apply the same techniques of Theorem \ref{T:main} to this new space, obtaining the desired results.

We have been informed by Peter Ha\"issinsky that he independently obtained a statement similar to Theorem \ref{T:main}.

\smallskip
The paper is organized as follows. The first part deals with the case without periodic critical points.
In Section 2, we introduce the definition of weakly coarse expanding systems and construct the geometric coding trees.
In Section 3, we prove that entropy does not drop under the projection from the symbolic space and in Section 4
we prove existence and uniqueness of equilibrium states as well as the statistical laws, proving Theorem \ref{T:main} (1)-(5).

In the second part we deal with periodic critical points; in particular, in Section 5 we define a new space by blowing up the sphere
along the grand orbit of every periodic critical (branch) point and produce, using Frink's lemma, an exponentially contracting metric, which we use to prove Theorem \ref{T:main-sphere}
(1)-(5). In Section 6 we deal with the cohomological equation, proving (6) and (7) of Theorem \ref{T:main} and \ref{T:main-sphere}.

Finally, in Appendix A we provide an alternative proof by constructing explicitly an exponentially contracting metric on
the blown up space, and in Appendix B we prove that the blown up space embeds into the sphere.

\subsection*{Acknowledgements}

We thank Peter Ha\"issinsky, Zhiqiang Li, Daniel Meyer, and Kevin Pilgrim for useful conversations, as well as the anonymous referees for their feedback. G. T. was partially supported by NSERC and the Alfred P. Sloan Foundation.
A. Z. was supported by the National Science Centre, Poland, grant no. 2018/31/B/ST1/02495.

\section{Definitions} \label{S:def}

Let us start by introducing the fundamental definitions of the objects we are dealing with.
We follow \cite{HP} in several places, though with some notable changes.

\subsection{Finite branched coverings}

Let $f : Y \to Z$ be a continuous map between locally compact Hausdorff topological spaces.
Note that such spaces satisfy the 
$T_3$ separation axiom (see e.g. \cite{Engelking}, Chapter 3).

The \emph{degree} of $f$ is defined as
$$\textup{deg}(f) := \sup \{ \# f^{-1}(z) \ : \ z \in Z \}.$$
Given a point $y \in Y$, the \emph{local degree} of $f$ at $y$ is
$$\textup{deg}(f; y) := \inf_U \sup \#\{ f^{-1}(z) \cap U \ : \ z \in f(U)\}$$
where $U$ ranges over all open neighborhoods of $y$. A point $y$ is \emph{critical} if $\textup{deg}(f; y)  > 1$.

\begin{definition}
The map $f : Y \to Z$ is a \emph{finite branched cover} of degree $d$ if $\textup{deg}(f) = d < \infty$ and the
two following conditions hold:
\begin{enumerate}
\item
for any $z \in Z$,
$$\sum_{y \in f^{-1}(z)} \textup{deg}(f; y) = \textup{deg}(f)$$
\item
for any $y_0 \in Y$ there are compact neighborhoods $U, V$ of $y_0$ and $f(y_0)$ such that
$$\sum_{y \in U, f(y) = z} \textup{deg}(f; y) = \textup{deg}(f; y_0)$$
for all $z \in V$.
\end{enumerate}
\end{definition}

We define the \emph{branch set} as $B_f := \{ y \in Y \ : \ \textup{deg}(f; y) > 1\}$, and the set of \emph{branch values} as $V_f = f(B_f)$.
Note that $Z \setminus V_f$ is the set of \emph{principal values}, i.e. the values $z \in Z$ so that $\#f^{-1}(z) = d$.

\begin{lemma}[\cite{HP}, Lemma 2.1.2]
A finite branched cover is open, closed, onto and proper. Furthermore, $B_f$ and $V_f$ are closed and nowhere dense.
\end{lemma}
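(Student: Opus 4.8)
The plan is to bootstrap everything from the two defining conditions, in the order: surjectivity and finiteness of fibers, a local structure lemma, properness, closedness, openness, and finally the statements about $B_f$ and $V_f$.

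First, since $\sum_{y\in f^{-1}(z)}\deg(f;y)=d\ge 1$ and every local degree is $\ge 1$, each fiber $f^{-1}(z)$ is nonempty with at most $d$ points; in particular $f$ is onto with finite fibers. The key local observation is that the compact neighborhood $U$ produced by condition (2) at a point $y_0$ automatically satisfies $U\cap f^{-1}(f(y_0))=\{y_0\}$: putting $z=f(y_0)$ into the equation of (2) gives $\sum_{y\in U\cap f^{-1}(f(y_0))}\deg(f;y)=\deg(f;y_0)$, and since $y_0$ already lies in this set and all summands are positive there can be no further point. I would then prove a "no escaping preimages" lemma. Given $y_0$, write $f^{-1}(f(y_0))=\{y_1,\dots,y_k\}$ with $y_1=y_0$; apply (2) at each $y_i$ to get compact neighborhoods $U_i\ni y_i$ and $V_i\ni f(y_0)$. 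Using Hausdorffness together with the property just noted, shrink each $U_i$ so that the $U_i$ become pairwise disjoint and simultaneously shrink $V_i$: the discarded part $K_i:=U_i^{\mathrm{old}}\setminus\operatorname{int}(U_i)$ is compact and, by that property, disjoint from $f^{-1}(f(y_0))$, so $f(K_i)$ is compact (hence closed) and misses $f(y_0)$, and deleting it from $V_i$ preserves the equation of (2). Now set $V^*:=\bigcap_i\operatorname{int}(V_i)$. For $z\in V^*$ one has $f^{-1}(z)\subseteq\bigcup_i U_i$: an extra point $w\in f^{-1}(z)\setminus\bigcup_i U_i$ would force $\sum_{y\in f^{-1}(z)}\deg(f;y)\ge \deg(f;w)+\sum_i\deg(f;y_i)\ge 1+d$, contradicting condition (1).

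With this local structure the topological properties follow quickly. For properness, cover a compact $K\subseteq Z$ by finitely many sets $V^*$ as above; since $f^{-1}(V^*)$ sits inside the compact set $\bigcup_i U_i$ and $f^{-1}(K)$ is closed, $f^{-1}(K)$ is compact. A proper continuous map to a locally compact Hausdorff space is closed, which gives closedness of $f$ (alternatively, argue directly: for closed $A$ and $z\in\overline{f(A)}$ pick a compact neighborhood $V$ of $z$; then $f(A\cap f^{-1}(V))$ is compact, hence closed, and contains $f(A)\cap\operatorname{int}(V)$, so $z\in f(A)$). For openness, given open $O\ni y_0$, apply the same shrinking as in the lemma to a neighborhood $U$ of $y_0$ from (2) to assume $U\subseteq O$; this yields a compact $V'\ni f(y_0)$ with $\sum_{y\in U,\,f(y)=z}\deg(f;y)=\deg(f;y_0)\ge 1$ for all $z\in V'$, whence $V'\subseteq f(U)\subseteq f(O)$, so $f(O)$ is a neighborhood of each of its points.

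It remains to handle $B_f$ and $V_f$. The set $\{y:\deg(f;y)=1\}$ is open, because if $\deg(f;y_0)=1$ then condition (2) gives $(U,V)$ with $\sum_{y\in U,\,f(y)=z}\deg(f;y)=1$ on $V$, so every $y$ in the open set $\operatorname{int}(U)\cap f^{-1}(\operatorname{int}(V))$ has $\deg(f;y)=1$; thus $B_f$ is closed, and $V_f=f(B_f)$ is closed since $f$ is closed. For nowhere density, note first that $z\mapsto\#f^{-1}(z)$ is lower semicontinuous: with the notation above, $\#f^{-1}(z)=\sum_i\#(f^{-1}(z)\cap U_i)\ge k$ for $z\in V^*$. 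If $V_f$ had nonempty interior, lower semicontinuity would produce a nonempty open $O'\subseteq Z$ on which $\#f^{-1}\equiv k$ with $k\le d-1$; taking $z_0\in O'$ with preimages $y_1,\dots,y_k$ and good neighborhoods $U_i$ with $f^{-1}(V^*)\subseteq\bigcup_i U_i$, the identity $\#f^{-1}(z)=\sum_i\#(f^{-1}(z)\cap U_i)=k$ with each summand $\ge 1$ forces $\#(f^{-1}(z)\cap U_i)=1$ for all $z$ near $z_0$ and all $i$, so $f$ restricts to a homeomorphism from a neighborhood of $y_i$ onto an open set and therefore $\deg(f;y_i)=1$ for each $i$; then $\sum_i\deg(f;y_i)=k<d$ contradicts condition (1). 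Hence $V_f$ is nowhere dense, and since $f$ is open with $f(B_f)=V_f$, a nonempty open subset of $B_f$ would map onto a nonempty open subset of $V_f$; so $B_f$ is nowhere dense as well.

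The main obstacle I expect is the local structure lemma, specifically verifying $f^{-1}(V^*)\subseteq\bigcup_i U_i$ after the Hausdorff shrinking: this is the step that upgrades the purely local axiom (2) to simultaneous control of all preimages, and everything else (properness, closedness, openness, lower semicontinuity, nowhere density) is a fairly mechanical consequence. The one genuinely slick point is extracting "$U\cap f^{-1}(f(y_0))=\{y_0\}$" directly from the axioms, since this is exactly what legitimizes the shrinking argument.
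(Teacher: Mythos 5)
The paper does not actually prove this lemma: it is imported verbatim from Ha\"issinsky--Pilgrim (\cite{HP}, Lemma 2.1.2) and used as a black box, so there is no in-paper argument to compare against. Judged on its own, your proof is correct and self-contained from the two axioms of the definition. The decisive step is exactly the one you flag: the observation $U\cap f^{-1}(f(y_0))=\{y_0\}$ extracted from axiom (2), which legitimizes the shrinking, and the resulting ``no escaping preimages'' statement $f^{-1}(V^*)\subseteq\bigcup_i U_i$ (the extra-preimage count $\ge 1+d$ contradicting axiom (1) is right, since the shrunk $U_i$ are pairwise disjoint and each still contributes exactly $\deg(f;y_i)$, which sums to $d$ over the fiber of $z_0$). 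From there your deductions of properness, closedness, openness, closedness of $B_f$ via openness of $\{\deg(f;\cdot)=1\}$, and the nowhere density of $V_f$ via lower semicontinuity of $z\mapsto\#f^{-1}(z)$ all go through; this is essentially the standard argument one would find behind the cited lemma. Two points are stated more tersely than they deserve but are routine to complete: (i) to get a nonempty open set on which $\#f^{-1}$ is constant you should say you pick $z_0$ maximizing $\#f^{-1}$ over $\operatorname{int}(V_f)$ (possible since the count is integer-valued and at most $d-1$ there), after which lower semicontinuity plus maximality forces local constancy; and (ii) in the openness argument the shrunken $V'=V\setminus f(K)$ is a neighborhood of $f(y_0)$ rather than a compact one, which is all you need. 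Neither affects correctness.
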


A simple corollary of the lemma is that $f$ is a local homeomorphism away from critical points; i.e.,
for any $y \notin B_f$ there exists an open set $U$ which contains $y$ and such that $f : U \to f(U)$ is a homeomorphism.

\begin{definition}
A topological space $\mathcal{X}$ is \emph{strongly path connected} if for any countable subset $S \subset \mathcal{X}$, the space $\mathcal{X} \setminus S$ is path connected.
\end{definition}

Note that we need such an assumption: for instance, a tree is path connected but it becomes disconnected if you remove any point.

\subsection{Coarse expanding dynamical systems}

Let $W_1, W_0$ be two locally compact, locally connected, strongly path connected, Hausdorff topological spaces, 
and suppose that $W_1$ is an open subset of $W_0$ and the closure of $W_1$ is compact.

We define a \emph{system} as a triple $(f,W_0,W_1)$, where $W_0$, $W_1$ satisfy the hypotheses above, and $f : W_1 \to W_0$ is a finite branched cover of degree $d \geq 2$.

\begin{definition}
We define the \emph{repellor} $X$ of the system $(f, W_0, W_1)$ as
$$X := \bigcap_{n = 0}^\infty f^{-n}(W_1).$$
\end{definition}

By definition, $X = f^{-1}(X)$. Note that $X$ is compact, by compactness of the closure of $W_1$.
Moreover, we define the \emph{post-branch set} as
$$P_f := X \cap \bigcup_{n > 0} V_{f^n}.$$
Note that we do \emph{not} take the closure of the post-branch set (differently from \cite{HP}).

\medskip
Ha\"issinsky and Pilgrim \cite[Section 2.2]{HP} give the following topological definition of expansion.
Let $\mathcal{U}_0$ be a finite cover of $X$ by connected, open subsets of $W_1$ whose intersection with $X$ is not empty.
For each $n$, we define $\mathcal{U}_n$ as the open cover whose elements are the connected components of $f^{-n}(U)$ where
$U$ belongs to $\mathcal{U}_0$. We shall call \emph{pullbacks} of $U$ the connected components of the preimages $f^{-n}(U)$.

\begin{definition} \label{D:expansion}
A system $(f, W_0, W_1)$ satisfies the \textup{[Expansion]} axiom if there exists a finite cover $\mathcal{U}_0$ of $X$ such that 
the following holds:
for any open cover $\mathcal{Y}$ of $X$ by open subsets of $W_0$, there exists
$N$ such that for all $n \geq N$ each element of $\mathcal{U}_n$ is contained in some element of $\mathcal{Y}$.
We say that $\mathcal{U}_n$ is \textup{subordinated} to $\mathcal{Y}$.
\end{definition}

The following is useful. We denote as $\Comp_p U$ the connected component of $U$ containing $p$.

\begin{lemma}\label{L:repel}
Consider a system $(f, W_0, W_1)$ satisfying the \textup{[Expansion]} axiom with respect to a cover $\mathcal{U}$. 
Let $p\in X$ be a fixed point for $f$, where we allow $p$ to be critical. 
Then the following holds:
\begin{enumerate}
\item
There exist an open set $U \in \mathcal{U}$ containing $p$ and $N\in\N$ such that
$\Comp_p f^{-n}(U)$ lies in $U$ for all $n\ge N$.
\item 
Denoting $U_{n,p} := \Comp_p f^{-n}(U)$, we have
$$\bigcap_n U_{n,p} =\{p\}.$$
\item
(stronger than (1)) 
There exists $N$ such that for every open neighborhood $V$ of $p$ there exists $n_V \in \mathbb{N}$ such that 
the open set $U = U_{n_V, p}$ satisfies $U \subseteq V$ and 
$$\Comp_p f^{-n}(U) \subset U \;\;\;\; \hbox{for all}\;\; n\ge N.$$
\item
Moreover, for $U=U_{n_V+N,p}$ we obtain
$\Comp_p f^{-n}(U) \subset U_{n_V,p}\subset V$ for all $n\ge 0$. 
\end{enumerate}
If $p$ is periodic for $f$ of period $k$, all claims still hold after replacing $f$ by $f^k$.
\end{lemma}

\noindent In other words, $(2)$ says that each periodic orbit in $X$ is repelling (maybe not exponentially). 

\smallskip

If $W_0$ is equipped with a metric, \textup{[Expansion]} implies that the diameter of the cover $\mathcal{U}_n$ tends uniformly to zero as $n \to \infty$. The proof of Lemma \ref{L:repel} becomes immediate, since
 in Definition \ref{D:expansion} of \textup{[Expansion]} we can
take covers by discs of arbitrarily small diameters. In a purely topological situation the proof is less obvious and illustrates the adequacy of the \textup{[Expansion]} axiom.

First we provide the following.

\begin{lemma}\label{L:T3}
Let $(f, W_0, W_1)$ be a system satisfying the \textup{[Expansion]} axiom, with a finite cover $\mathcal{Y}$ of $X$ by open sets in $W_0$, with pullbacks $\mathcal{Y}_n, n=0,1,...$.
Then, given $p\in X$ and an element $Y(p)$ of $\mathcal{Y}_0$ containing $p$, we can modify $\mathcal{Y}$ to a new cover, 
still satisfying [Expansion], which contains $Y(p)$ and for which $Y(p)$ is the unique element containing $p$.
As a consequence, in the modified cover for every $n$ there is a unique element $Y_{n,p}\in\mathcal{Y}_n$ containing $p$.
\end{lemma}

\begin{proof} Choose an arbitrary $Y\in \mathcal{Y}_0$ containing $p$ and
subtract from all other $Y'\ni p$ a compact $K\subset Y$ containing $p$ in its interior, which exists by the $T_3$ property of our topology on $W_0$.
\end{proof}

\begin{proof}[Proof of Lemma \ref{L:repel}]
Let us apply Lemma \ref{L:T3} by taking as $\mathcal{Y}$ the cover $\mathcal{U}$ given by the \textup[Expansion] axiom.
Correct it to $\mathcal{U}'$ so that it has an element $U(p)$ which is unchanged and contains $p$.  
Then, there exists $N\in\N$ such that for all $n\ge N$ the old $\mathcal{U}_n$ is subordinated to $\mathcal{U}'$, 
with the pullback $U_{n,p}$ of $U(p)$ contained in a unique $U'(p)\in \mathcal{U}'$, hence in $U(p)\in \mathcal{U}$.
This proves (1). 

\smallskip

To prove (2), consider a basis $\mathcal{B}_p$ of our topology on $W_0$ at $p$. For each $Y\in \mathcal{B}_p$, take an arbitrary cover $\mathcal{Y}_Y$ of $X$ which includes $Y$, corrected by Lemma \ref{L:T3}.
Next, by [Expansion], for each $Y\in \mathcal{B}_p$ there is $n=n_Y$ so that
$\mathcal{U}_n$ is subordinated to $\mathcal{Y}_Y$ according to Definition \ref{D:expansion}. By uniqueness,
$U_{n,p} \subset Y$.
So the family $\{U_{n_Y,p}: Y\in \mathcal{B}_p\}$  also constitutes a basis at $p$,
hence (2) holds since our topology is Hausdorff. 

\smallskip

(3) Fixed $N$ as in (1), for an arbitrary $V$ choose $n_V$ as above.  We find $n_V$ such that $U_V:=U_{n_V,p}\subset V$. Then  taking adequate pullbacks of the sets in (1) we obtain
$\Comp_p f^{-n}(U_V) \subset U_V$ for all $n\ge N$.

\smallskip

Finally, writing $n=N+k$ for $n \ge N$ we can write
$\Comp_p f^{-k}(U_{n_V+N,p}) \subset U_{n_V,p}$ for all $k\ge 0$, yielding (4).
\end{proof}

By taking in the above proof an arbitrary $p\in X$ we prove the following fact, useful e.g. in the Proof of Theorem \ref{T:exp-contr}. 

\begin{lemma}\label{L:basis}
If a system $(f, W_0, W_1)$ satisfies the \textup{[Expansion]} axiom with a finite cover $\mathcal{U}$,
then the family $\mathcal{U}_n, n=0,1,2,...$ constitutes a basis of the topology on $W_0$ at all points of $X$.
Moreover, a basis is constituted by the family $\mathcal{U}_{Mn}$ for every integer $M\ge 1$.
\end{lemma}

\begin{proof}
As above, for every $p\in X$ and open $V\ni p$ there are $U_{n,p}\in \mathcal{U}_n$ in $V$ for all $n\ge n_V$, in particular for $n$ multiple of $M$.
\end{proof}

Note that the above Lemma \ref{L:basis} remains true if we replace $Mn$ by any sequence $n_i \to \infty$.

\begin{definition}
A system $(f, W_0, W_1)$ is said to satisfy the \textup{[Irreducibility]} axiom, see \cite{HP}, if $f$ restricted
to its repellor $X$ satisfies the \emph{locally eventually onto (leo)} property, namely for any $x \in X$ and any neighborhood $W$ of $x$ there is some $n$ with $f^n(W) = X$.
\end{definition}



\subsection{Weakly coarse expanding systems}

Let us now give a definition of weakly coarse expanding system.

\begin{definition}
A system $(f, W_0, W_1)$ is \emph{weakly coarse expanding} if:
\begin{enumerate}
\item it satisfies the \textup{[Expansion]} axiom;
\item it satisfies the \textup{[Irreducibility]} axiom;
\item the branch set $B_f$ is finite;
\item the repellor $X$ is not a single point.
\end{enumerate}
\end{definition}

Note that if $W_0 \subseteq S^2$ is an open subset of the $2$-sphere, then the set $B_f$ is always finite (see \cite{Why}).
Let us note that the last assumption implies that $X$ is uncountable and the topological entropy of $f$ on $X$ is positive.

We shall also use the following lemma (\cite[Proposition 2.4.1]{HP}), 
which can be proven using the fact that disjoint compact sets can be separated by disjoint open ones, 
a consequence of the compactness of $\overline{W_1}$ and the Hausdorff property. 

\begin{lemma}\label{L:chain}
Let $(f, W_0, W_1)$ be a weakly coarse expanding system with respect to the cover $\mathcal{U}$.
Then, there exists $N$ such that for any $U_1, U_2 \in \mathcal{U}_N$ with $U_1 \cap U_2 \neq \emptyset$, there exists
$U \in \mathcal{U}$ such that $U_1 \cup U_2 \subseteq U$.
\end{lemma}

\subsection{Exponentially contracting metrics}

A very important property of coarse expanding systems is that we can find a metric so that preimages shrink exponentially fast.

\begin{theorem} \label{T:exp-contr}
Suppose $f : W_1 \to W_0$ is a finite branched cover and axiom \textup{[Expansion]} holds.
Then there exist a metric $\rho$ on $X$ compatible with the topology and constants $C > 0$, $\theta < 1$ such that
for all $n \geq 0$
$$\sup_{U \in \mathcal{U}_n} \textup{ diam}_\rho (U) \leq C \theta^n.$$
\end{theorem}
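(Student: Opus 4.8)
The plan is the classical ``visual metric'' construction: distil from \textup{[Expansion]} enough combinatorial information about the nested covers $\mathcal{U}_n$ to build a symmetric gauge $q(x,y)=\theta^{m(x,y)}$, where $m(x,y)$ records the deepest level at which $x$ and $y$ are combinatorially linked, and then promote $q$ to a genuine metric by Frink's lemma. I would begin with three preliminary observations. Since $\deg(f)=d<\infty$, each $\mathcal{U}_n$ has at most $d^n$ elements; combined with \textup{[Expansion]}, the traces $\{U \cap X : U \in \mathcal{U}_n,\ n \ge 0\}$ then form a countable basis of $X$, so $X$ is second countable, and it is regular Hausdorff as a subspace of the locally compact Hausdorff space $W_0$; hence $X$ is metrizable, and (recall that $X$, being a closed subset of $\overline{W_1}$, is compact) it is in particular normal. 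Next, \textup{[Expansion]} applied to $\mathcal{Y}=\mathcal{U}_0$ gives $N_0$ with every element of $\mathcal{U}_{N_0}$ contained in an element of $\mathcal{U}_0$; the pull-back observation that $f^n$ carries a connected component of $f^{-(n+m)}(U_0)$ into a \emph{single} connected component of $f^{-m}(U_0)$ (its image being connected) then upgrades this to: every element of $\mathcal{U}_{n+N_0}$ lies in an element of $\mathcal{U}_n$, for all $n\ge 0$.

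The main obstacle --- the only genuinely substantive point --- is that \textup{[Expansion]} constrains only the \emph{individual} elements of $\mathcal{U}_n$, whereas the triangle inequality for the gauge will require controlling \emph{chains} of overlapping elements. So the heart of the proof is the following chain refinement: for every $j\ge 1$ and every finite open cover $\mathcal{Y}$ of $X$ by open subsets of $W_1$, there is $N$ such that for all $n\ge N$ the union of any chain $U_1,\dots,U_j\in\mathcal{U}_n$ with $U_i\cap U_{i+1}\cap X\ne\emptyset$ lies inside a single element of $\mathcal{Y}$. I would prove it by applying \textup{[Expansion]} not to $\mathcal{Y}$ but to a $\lceil\log_2 j\rceil$-fold iterated \emph{open star refinement} of $\mathcal{Y}$; such a refinement --- a finite cover of $X$ by open subsets of $W_1$ --- is obtained by repeatedly invoking normality of the compact Hausdorff space $\overline{W_1}$ (shrink the given cover away from the closed set $X$, star-refine the resulting finite cover of $\overline{W_1}$, and intersect back with $W_1$). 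A $j$-chain in $\mathcal{U}_n$ is then carried to a $j$-chain in the refinement, whose union lies in a single element of $\mathcal{Y}$ by the defining property of iterated star refinements. Feeding $\mathcal{Y}=\mathcal{U}_0$ into this and pulling back as in the previous paragraph produces, for every $j$, a constant $M_j$ such that any $j$-chain in $\mathcal{U}_{n+M_j}$ has union inside a single element of $\mathcal{U}_n$, uniformly in $n$; with this, the remaining work is routine bookkeeping.

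Now, for distinct $x,y\in X$, let $m(x,y)$ be the largest level $n$ at which $x$ and $y$ are joined by a $2$-chain in $\mathcal{U}_n$ --- finite, by the chain refinement with $j=2$ applied to the two-element cover $\{W_1\setminus\{x\},\,W_1\setminus\{y\}\}$, neither element of which contains both points --- and set $m(x,x):=+\infty$. Splicing a $2$-chain joining $x,y$ to one joining $y,z$ at the shared point $y$ yields a $4$-chain joining $x,z$; aligning the two levels with the nesting from the first paragraph and then applying the chain refinement with $j=4$ gives a constant $c$ with $m(x,z)\ge\min\{m(x,y),m(y,z)\}-c$. Hence $q(x,y):=\theta^{m(x,y)}$ with $\theta:=2^{-1/c}\in(0,1)$ is symmetric, positive off the diagonal and zero on it, and satisfies $q(x,z)\le 2\max\{q(x,y),q(y,z)\}$; Frink's lemma then yields a genuine metric $\rho$ on $X$ with $\tfrac14 q\le\rho\le q$ (when $c=0$, $q$ is already an ultrametric and $\rho=q$). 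Finally, for $U\in\mathcal{U}_n$ any two points of $U\cap X$ are joined by the degenerate $2$-chain $(U,U)$ at level $n$, so their $m$-value is at least $n$; thus $\rho(x,y)\le q(x,y)=\theta^{m(x,y)}\le\theta^n$ and $\diam_\rho(U)=\diam_\rho(U\cap X)\le\theta^n$, which is the assertion with $C=1$. The same estimate shows that each basic set $U\cap X$ sits inside a small $\rho$-ball, while the lower bound $\rho\ge\tfrac14 q$ together with the $j=2$ chain refinement (for large $n$ a $2$-chain through $x$ stays inside any prescribed neighbourhood of $x$, so points outside it have bounded $m(\cdot,x)$) shows that every neighbourhood of $x$ contains a $\rho$-ball; hence $\rho$ is compatible with the topology of $X$.
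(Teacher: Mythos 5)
Your proposal is correct in outline but takes a genuinely different route from the paper. The paper does not argue purely topologically: it reduces Theorem \ref{T:exp-contr} to Proposition \ref{P:Frink-exp}, which presupposes some compatible metric on $X$ in which $\mesh(\mathcal{U}_n)\to 0$, takes the Lebesgue number $\eta$ of $\mathcal{U}_0$, chooses $M$ so that all levels beyond $M$ have mesh $<\eta/3$, blocks the levels in groups of $M$ to define neighbourhoods $\Omega_n$ of the diagonal, checks Frink's triple-composition condition at the top level via the Lebesgue number (three chained sets of diameter $<\eta/3$ lie in a single element of $\mathcal{U}_0$), transports it to all levels by taking $f^{M(n-1)}$-preimages, and then invokes Lemma \ref{L:Frink-lemma}, getting $\theta=2^{-1/M}$. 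You instead build a visual-type gauge $\theta^{m(x,y)}$ directly from the combinatorics of the covers, replacing the Lebesgue number by iterated star refinements (normality of $\overline{W_1}$) and supplying the metrizability of $X$ that the paper's reduction leaves implicit; this is essentially the Ha\"issinsky--Pilgrim visual-metric construction that the authors explicitly chose to bypass. Your chain-refinement lemma, its pull-back to constants $M_j$ uniform in $n$, the diameter bound, and the compatibility with the topology are all argued correctly; what your route buys is self-containedness (no auxiliary metric) at the cost of the star-refinement and chain bookkeeping.

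Three points need tightening, one of them substantive. (i) The Frink step: you verify the two-term inequality $q(x,z)\le 2\max\{q(x,y),q(y,z)\}$, but for the sublevel sets this only yields the \emph{double} composition $\Omega_{n+1}\circ\Omega_{n+1}\subseteq\Omega_n$, whereas Frink's lemma in the form you need (and in the form of Lemma \ref{L:Frink-lemma}) requires the \emph{triple} composition; equivalently, the chain-sum comparison $\rho\ge\tfrac14 q$ is available under the three-term inequality $q(x,w)\le 2\max\{q(x,y),q(y,z),q(z,w)\}$, or the two-term one with constant at most $\sqrt2$, and the standard chain induction does not close at constant $2$. Without the lower comparison, positivity of $\rho$ off the diagonal (and your compatibility argument) is not justified. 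The fix lies entirely within your machinery: splice \emph{three} $2$-chains, through $y$ and $z$, into a $6$-chain and apply your chain refinement with $j=6$, obtaining $m(x,w)\ge\min\{m(x,y),m(y,z),m(z,w)\}-c'$ and hence the three-term inequality with $\theta=2^{-1/c'}$; alternatively replace $q$ by $q^{1/2}$. (ii) Your nesting statement, ``every element of $\mathcal{U}_{n+N_0}$ lies in an element of $\mathcal{U}_n$,'' only lets you descend in steps that are multiples of $N_0$, which does not align two chains at levels $a>b$ at a \emph{common} level; you need, and the same [Expansion]-plus-pullback argument gives (since the $N$ furnished by [Expansion] works for all deeper levels simultaneously), that every element of $\mathcal{U}_{n+k}$ lies in an element of $\mathcal{U}_n$ for \emph{every} $k\ge N_0$, after which both chains descend to level $\min\{a,b\}-N_0$. (iii) For pairs not joined by any $2$-chain at level $0$ your $m(x,y)$ is $-\infty$; truncate $m$ at $0$ (equivalently cap $q$ at $1$), noting the quasi-inequality is automatic when its right-hand side is $\ge 1/2$. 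With these adjustments your proof is complete, with an explicit (possibly different) $\theta$.
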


We call a metric $\rho$ which satisfies the above property an \emph{exponentially contracting} metric. 
Observe that, as the metric $\rho$ is only defined on $X$, here and later $\textup{ diam}_\rho (U)$ means the diameter of $U \cap X$ (as in \cite{HP}).

An important example (with additional properties) are the \emph{visual metrics} constructed in \cite[Chapter 3]{HP} and \cite[Chapter 8]{BM}. Our notion of exponentially contracting metric is more general than the notion of visual metric, as we do not require any lower bound on the diameters of the elements of $\mathcal{U}_n$.
Note that by compactness of $X$ any metric $\rho$ on $X$ which induces this topology is complete.

The above theorem is essentially [\cite{HP}, Theorem 3.2.5]. We will, however, give a complete proof below, using a different method. We use Frink's metrization lemma \cite{frink}, in the following form.

\begin{lemma}[Frink's metrization lemma, \cite{PU}, Lemma 4.6.2]\label{L:Frink-lemma}
Let $X$ be a topological space, and let $(\Omega_n)_{n \geq 0}$ be a sequence of open neighborhoods of the diagonal $\Delta \subseteq X \times X$, such that
\begin{itemize}
\item[(a)]
$$\Omega_0 = X \times X$$
\item[(b)]
$$\bigcap_{n = 0}^\infty \Omega_n = \Delta$$
where $\Delta$ is the diagonal in $X \times X$.
\item[(c)]
For any $n \geq 1$,
$$\Omega_n \circ \Omega_n \circ \Omega_n \subseteq \Omega_{n-1}$$
where $\circ$ is the composition in the sense of relations: i.e., $R \circ S = \{ (x, y) \in X \times X \ : \ \exists z \in X  \textup{ s.t. } (x,z) \in R \textup{ and }(z,y) \in S \}$.
\end{itemize}
Then there exists a metric $\rho$ on $X$, compatible with the topology, such that
$$\Omega_n \subseteq \{ (x,y) \in X \times X \ : \ \rho(x,y) < 2^{-n} \} \subseteq \Omega_{n-1}$$
for any $n \geq 1$.
\end{lemma}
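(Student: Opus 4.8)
The plan is to realize Frink's lemma as a "chaining" metrization argument, in which the triple composition in hypothesis (c) is precisely what produces the sharp powers of $2$. First I would record two preliminary observations: hypothesis (c) together with $\Delta\subseteq\Omega_n$ forces $(\Omega_n)$ to be decreasing, since $\Omega_n\subseteq\Omega_n\circ\Omega_n\circ\Omega_n\subseteq\Omega_{n-1}$; and, replacing each $\Omega_n$ by $\Omega_n\cap\Omega_n^{-1}$ if necessary (which preserves (a), (b), (c), and is automatic in the intended applications), I may assume each $\Omega_n$ is symmetric. Then I would define the \emph{gauge} $g\colon X\times X\to[0,1]$ by $g(x,y):=\inf\{2^{-n}:(x,y)\in\Omega_n\}$; this is well defined and $\le 1$ by (a), symmetric by symmetry of the $\Omega_n$, and vanishes exactly on the diagonal by (b). Since the set $\{n:(x,y)\in\Omega_n\}$ is a down-set containing $0$, one gets the bookkeeping facts $g(x,y)\le 2^{-n}\Rightarrow(x,y)\in\Omega_n$ and $g(x,y)<2^{-n}\Rightarrow(x,y)\in\Omega_{n+1}$.

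The heart of the proof is the following \emph{chain lemma}: for every $n\ge 1$ and every finite chain $x=x_0,x_1,\dots,x_k=y$ with $\sum_{i=0}^{k-1}g(x_i,x_{i+1})<2^{-n}$ one has $(x,y)\in\Omega_n$. I would prove this by induction on $k$, the case $k=1$ being the bookkeeping fact above. For the inductive step, write $S$ for the total weight; we may assume $n$ is the largest integer with $S<2^{-n}$, so $S\ge 2^{-(n+1)}$, and we choose the largest index $p$ for which the prefix $x_0,\dots,x_p$ has weight $<2^{-(n+1)}$. One checks that $p<k$, that the suffix $x_{p+1},\dots,x_k$ then has weight $\le S-2^{-(n+1)}<2^{-(n+1)}$, and that the middle edge satisfies $g(x_p,x_{p+1})\le S<2^{-n}$, hence $(x_p,x_{p+1})\in\Omega_{n+1}$. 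Applying the inductive hypothesis to the strictly shorter prefix and suffix (trivially if either degenerates to a single point) gives $(x_0,x_p),(x_{p+1},x_k)\in\Omega_{n+1}$, and therefore $(x_0,x_k)\in\Omega_{n+1}\circ\Omega_{n+1}\circ\Omega_{n+1}\subseteq\Omega_n$ by (c). I expect this three-way split — prefix, single edge, suffix — to be the main point, and the reason hypothesis (c) is stated with a \emph{triple} composition rather than a double one.

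With the chain lemma in hand, the rest is routine. I would set $\rho(x,y):=\inf\sum_{i=0}^{k-1}g(x_i,x_{i+1})$, the infimum over all finite chains from $x$ to $y$; this is symmetric, satisfies the triangle inequality (concatenate chains), vanishes on the diagonal, and is bounded above by $g$. If $\rho(x,y)=0$ then for every $n$ there is a chain of weight $<2^{-n}$, so by the chain lemma $(x,y)\in\bigcap_n\Omega_n=\Delta$, i.e.\ $x=y$; thus $\rho$ is a genuine metric. The sandwiching follows: if $\rho(x,y)<2^{-n}$ then some chain has weight $<2^{-n}$, whence $(x,y)\in\Omega_n\subseteq\Omega_{n-1}$; conversely $(x,y)\in\Omega_n$ gives $g(x,y)\le 2^{-n}$ and hence $\rho(x,y)\le 2^{-n}$, which yields $\Omega_n\subseteq\{\rho<2^{-n}\}$ after a harmless shift of indices (or after replacing $\Omega_n$ by the cofinal subsequence $\Omega_{n+1}$). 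Finally I would check that $\rho$ induces the original topology: each ball $B_\rho(x,2^{-n})$ is contained in the open section $\Omega_{n-1}[x]$, so $\rho$-balls are open; and conversely every neighbourhood of $x$ contains some section $\Omega_n[x]$, which in turn contains $B_\rho(x,2^{-(n+1)})$ — the only mildly delicate point being that the sections $\Omega_n[x]$ form a neighbourhood basis at $x$, which holds in the intended application since the $\Omega_n$ are built from covers that refine every open cover of $X$.
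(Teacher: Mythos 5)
The paper does not prove this lemma at all — it quotes it from \cite{PU} (Lemma 4.6.2) and \cite{frink} — and your chaining argument is precisely the classical proof given there: gauge $g$ from the $\Omega_n$, infimum over chains, and an induction on chain length in which a chain is split into a prefix, a single edge, and a suffix so that the triple composition in (c) can be applied. Your induction is carried out correctly (the choice of the maximal prefix of weight $<2^{-(n+1)}$, the bounds on the three pieces, and the degenerate cases are all handled), so the proposal is correct and follows essentially the same route as the cited source.

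One substantive remark: the two caveats you flag are not mere prudence — the lemma as literally stated fails without them, so your hedges are exactly where the extra hypotheses must be supplied. For symmetry: the conclusion $\Omega_n\subseteq\{\rho<2^{-n}\}\subseteq\Omega_{n-1}$ forces $\Omega_n^{-1}\subseteq\Omega_{n-1}$, which does not follow from (a)--(c); e.g.\ on a two-point discrete space $X=\{a,b\}$ take $\Omega_0=X\times X$, $\Omega_1=\Omega_2=\Delta\cup\{(a,b)\}$ and $\Omega_n=\Delta$ for $n\ge3$: all hypotheses hold, but symmetry of any metric makes the sandwich at $n=2$ impossible. Your symmetrization $\Omega_n\cap\Omega_n^{-1}$ is the right fix, and it is harmless in this paper since every $\Omega_n$ used here is a union of sets of the form $(U\cap X)\times(U\cap X)$, hence symmetric. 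For compatibility with the topology: the sandwich only yields that the $\rho$-topology is generated by the sections $\Omega_n[x]$, which are open, so the $\rho$-topology is automatically no finer than the given one; equality requires exactly your condition that $\{\Omega_n[x]\}_n$ be a neighbourhood basis at each $x$ (take $[0,1]$ with the \emph{discrete} topology and $\Omega_n=\{(x,y):|x-y|<3^{-n}\}$ to see that (a)--(c) alone do not suffice). That condition does hold in both places the paper invokes the lemma — in Proposition \ref{P:Frink-exp} because $\mathrm{diam}_\rho$ of the elements of $\mathcal U_n$ tends to $0$, and in Section \ref{S:periodic-crit} by construction of the cover $\widetilde{\mathcal W}_n$ — so your proof covers everything the paper needs. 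Finally, your ``harmless shift'' at the end is most cleanly realized by replacing $\rho$ with $\rho/2$: then $\Omega_n\subseteq\{\rho\le 2^{-n}\}$ gives $\Omega_n\subseteq\{\rho/2<2^{-n}\}$, while $\{\rho/2<2^{-n}\}=\{\rho<2^{-(n-1)}\}\subseteq\Omega_{n-1}$.
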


Before we prove Theorem \ref{T:exp-contr}, we prove its instructive 
metric version, assuming 
there is an adequate metric on $X$; 
this holds if 
the topology on $X$ has a countable basis 
\cite[Theorem 4.2.8]{Engelking}.

\begin{proposition} \label{P:Frink-exp}
Let $f : W_1 \to W_0$ be a finite branched cover with repellor $X$, let $\mathcal{U}$ be a finite cover of $X$ by open subsets of $W_1$ and let $\rho$ be a metric on $X$ such that
$$\lim_{n \to \infty} \sup\{ \textup{diam}_\rho (U) \ : \ U \in \mathcal{U}_n \} = 0.$$
Then there exist a metric $\rho'$ on $X$, which induces the same topology as $\rho$, and constants $C > 0$, $\theta < 1$ such that
$$\textup{diam}_{\rho'}(U) \leq C \theta^n$$
for any $n \geq 0$ and any $U \in \mathcal{U}_n$.
\end{proposition}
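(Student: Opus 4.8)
The plan is to produce $\rho'$ from Frink's metrization lemma (Lemma~\ref{L:Frink-lemma}), applied to a sequence of neighbourhoods of the diagonal $\Delta\subseteq X\times X$ manufactured from the covers $\mathcal U_n$. Write $a_n:=\sup\{\diam_\rho U:U\in\mathcal U_n\}$, so that $a_n\to 0$ by hypothesis, and set $b_n:=\sup_{k\ge n}a_k$, a non-increasing null sequence. For $n\ge 0$ introduce the symmetric relations
\[
\Gamma_n:=\bigcup_{U\in\mathcal U_n}(U\cap X)\times(U\cap X),\qquad \widehat\Gamma_n:=\bigcup_{k\ge n}\Gamma_k;
\]
each is open in $X\times X$ (a union of products of relatively open sets) and contains $\Delta$, since every $\mathcal U_k$ covers $X$. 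Note that the $\widehat\Gamma_n$ are nested, $\widehat\Gamma_{n+1}\subseteq\widehat\Gamma_n$ (this is the reason for passing from $\Gamma_n$ to $\widehat\Gamma_n$: the $\Gamma_n$ need not be nested, as $\mathcal U_{n+1}$ need not refine $\mathcal U_n$), that $\widehat\Gamma_n\subseteq\{(x,y):\rho(x,y)\le b_n\}$, whence $\bigcap_n\widehat\Gamma_n=\Delta$, and — by the triangle inequality — that $\widehat\Gamma_n\circ\widehat\Gamma_n\circ\widehat\Gamma_n\subseteq\{(x,y):\rho(x,y)\le 3b_n\}$. Finally, since $X$ is compact and $\widehat\Gamma_n$ is an open neighbourhood of $\Delta$, there is for each $n$ a number $\beta_n>0$ (a Lebesgue number for the diagonal) with $\{(x,y):\rho(x,y)<\beta_n\}\subseteq\widehat\Gamma_n$, so that the metric balls about $\Delta$ and the cover-relations interleave.

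The core of the argument is then to assemble, from the $\widehat\Gamma_n$ together with auxiliary $\rho$-balls, a decreasing sequence of open neighbourhoods $X\times X=\Omega_0\supseteq\Omega_1\supseteq\cdots$ of $\Delta$ satisfying $\bigcap_j\Omega_j=\Delta$ and Frink's condition $\Omega_j\circ\Omega_j\circ\Omega_j\subseteq\Omega_{j-1}$ for every $j\ge 1$, and arranged so that for each $n$ the relation $\Gamma_n$ is contained in $\Omega_{m(n)}$ for some index $m(n)$ that grows at least linearly in $n$. Granting this, Frink's lemma yields a metric $\rho'$, compatible with the topology of $X$, with $\Omega_j\subseteq\{(x,y):\rho'(x,y)<2^{-j}\}$, so for $U\in\mathcal U_n$ one gets $(U\cap X)\times(U\cap X)\subseteq\Gamma_n\subseteq\Omega_{m(n)}\subseteq\{(x,y):\rho'(x,y)<2^{-m(n)}\}$, that is $\diam_{\rho'}(U)\le 2^{-m(n)}\le C\theta^n$ for a suitable $\theta<1$ and a constant $C$ chosen large enough to absorb the finitely many small $n$ (and, if $\mathcal U_0$ is disconnected, the pairs of points lying in no common element of $\mathcal U_0$).

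The step I expect to be the main obstacle is the construction of the sequence $(\Omega_j)$, i.e.\ reconciling Frink's triple-composition axiom with the requirement that the cover-relations be ``captured early''. On the one hand $\Omega_j\circ\Omega_j\circ\Omega_j\subseteq\Omega_{j-1}$ forces any stretch of the sequence consisting of $\rho$-balls to have its radii shrink by a factor of at least three at each step; on the other hand, to re-insert a relation $\widehat\Gamma_m$ into the sequence one needs $\Omega_{j-1}$ to contain $\widehat\Gamma_m\circ\widehat\Gamma_m\circ\widehat\Gamma_m$, hence a ball of radius comparable to $b_m$, while to then continue toward finer neighbourhoods one must immediately drop to a ball of radius no larger than $\beta_m$, and nothing in the bare hypothesis $a_n\to 0$ controls the ``thickness'' $\beta_m$ of $\widehat\Gamma_m$ from below relative to its diameter $b_m$. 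To make the bookkeeping close one must exploit the dynamical structure that has so far gone unused: $f$ carries each element of $\mathcal U_{n+1}$ onto an element of $\mathcal U_n$ and preserves incidences, so that a chain of elements at level $n+1$ pushes forward to a chain at level $n$, and chains in the fixed finite cover $\mathcal U_0$ can be analysed outright. Making these comparisons, and then organising the interleaving of cover-relations and metric balls so that both of Frink's one-sided inclusions hold while the indices $m(n)$ remain linear in $n$, is where the real work of the proof lies; the reduction dealing with the components of $W_0$ is a routine addition.
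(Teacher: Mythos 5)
Your frame is right---build relations out of the covers $\mathcal{U}_n$ and feed them to Frink's lemma---and your diagnosis is also right: the hypothesis $a_n\to 0$ alone cannot yield the triple-composition axiom, and the dynamics (chains of level-$n$ elements push forward to low-level chains, which can be tested against the fixed finite cover $\mathcal{U}_0$) is the missing input. But the proposal stops exactly there: the construction of the sequence $(\Omega_j)$, which is the entire content of the proposition, is announced as ``where the real work of the proof lies'' and never carried out, so this is a genuine gap rather than a proof. Moreover, the scaffolding you do set up (the nested $\widehat\Gamma_n$, the auxiliary $\rho$-balls to be interleaved, the lower ``thickness'' bounds $\beta_m$) is not needed and points away from the solution: the composition axiom is verified combinatorially through the dynamics, not by comparing cover-relations with metric balls, so the problem of controlling $\beta_m$ against $b_m$ never arises.

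Here is the missing construction, which is short. Let $\eta>0$ be a Lebesgue number of $\mathcal{U}_0$ on $(X,\rho)$ and choose $M\ge 1$ with $\sup\{\diam_\rho(U)\ :\ U\in\mathcal{U}_l\}<\eta/3$ for all $l\ge M$. With $V_n$ equal to your $\Gamma_n$, set $\Omega_0:=X\times X$ and $\Omega_n:=\bigcup_{k=0}^{M-1}V_{Mn+k}$ for $n\ge 1$. If $\Gamma_1,\Gamma_2,\Gamma_3$ with $\Gamma_i\in\mathcal{U}_{M+k_i}$, $0\le k_i<M$, satisfy $\Gamma_i\cap\Gamma_{i+1}\ne\emptyset$ for $i=1,2$, then $\diam_\rho(\Gamma_1\cup\Gamma_2\cup\Gamma_3)<\eta$, so the three sets lie in a single element of $\mathcal{U}_0$; for a chain at levels $Mn+k_i$ apply $f^{M(n-1)}$ to reduce to this case (images of elements of $\mathcal{U}_{Mn+k_i}$ lie in elements of $\mathcal{U}_{M+k_i}$ and the intersections persist), and pull the containing element of $\mathcal{U}_0$ back: the original chain lies in one component of its $f^{-M(n-1)}$-preimage, i.e.\ in a single element of $\mathcal{U}_{M(n-1)}$. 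Hence $\Omega_n\circ\Omega_n\circ\Omega_n\subseteq V_{M(n-1)}\subseteq\Omega_{n-1}$, while conditions (a) and (b) of Frink's lemma are immediate (the latter because $\sup_{l\ge Mn}a_l\to 0$). Frink's lemma then gives $\rho'$, compatible with the topology, with $\Omega_n\subseteq\{(x,y)\ :\ \rho'(x,y)<2^{-n}\}$, so every $U\in\mathcal{U}_n$ satisfies $\diam_{\rho'}(U)\le 2^{-\lfloor n/M\rfloor}$, i.e.\ the claim with $C=2$ and $\theta=2^{-1/M}$; in particular the index $m(n)=\lfloor n/M\rfloor$ is linear in $n$, as you wanted.
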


\begin{proof}
The proof follows from Lemma \ref{L:Frink-lemma}.
Let us define $V_n$ as the set of pairs $(x, y) \in X \times X$ such that there exists an element $U$ of $\mathcal{U}_n$
which contains both $x$ and $y$.
Let $\eta > 0$ be the Lebesgue number of $\mathcal{U}_0$ with respect to $\rho$.
By hypothesis there exists an integer $M \geq 1$ such that
$$\sup \{ \textup{diam}_\rho(U)  \  : \   U \in \mathcal{U}_l \} < \eta/3$$
for any $l \geq M$.
Now, let us define $\Omega_0 := X \times X$ and $\Omega_n := \bigcup_{k = 0}^{M-1} V_{Mn + k}$ for $n \geq 1$.
We need to check the hypotheses of Lemma \ref{L:Frink-lemma}.
Equation (a) is trivially true, as is the inclusion $\Delta \subseteq \bigcap_{n = 0}^\infty \Omega_n$ in (b). To prove the other inclusion, suppose that $(x, y) \in \Omega_n$ for any $n \geq 0$. Then for any $n$ there exists a connected set $\Gamma_n \in \mathcal{U}_{M n + k}$, 
with $0 \leq k \leq M-1$, which contains $x, y$.
Hence
$$\rho(x, y) \leq \sup \{ \textup{diam}_\rho(U)  \ : \ U \in \mathcal{U}_{l}, l \geq Mn \} \to 0$$
as $n \to \infty$. Thus $x = y$, as claimed. 

\smallskip

(Another way is to define $\Omega_n :=  V_{Mn}$ and refer to Lemma \ref{L:basis}.)

\smallskip

To prove (c), suppose to have sets $\Gamma_i \in \mathcal{U}_{M+k_i}$ for $i = 1, 2, 3$ with $0 \leq k_i < M$
and
$\Gamma_i \cap \Gamma_{i+1} \neq \emptyset$ for $i = 1, 2$.
Then $\textup{diam}_{\rho}(\Gamma_1 \cup \Gamma_2 \cup \Gamma_3) < \eta$, hence by definition of
Lebesgue number for the cover $\mathcal{U}_0\cap X$ of $X$,
there exists $\Gamma \in \mathcal{U}_0$ such that
$(\Gamma_1 \cup \Gamma_2 \cup \Gamma_3)\cap X \subseteq \Gamma$.
Hence
$$\Omega_1 \circ \Omega_1 \circ \Omega_1 \subseteq \Omega_0$$
and by taking $f^{M(n-1)}$-preimages,
$$\Omega_{n} \circ \Omega_{n} \circ \Omega_{n} \subseteq \Omega_{n-1}$$
for any $n \geq 1$, proving (c).

Thus, we can apply Lemma \ref{L:Frink-lemma}, obtaining that there exists a metric $\rho'$ on $X$,
which induces the same topology as $\rho$, such that
\begin{equation} \label{E:frink-inclusion}
\Omega_n \subseteq \{ (x,y) \in X \times X \ : \ \rho'(x,y) < 2^{-n} \} \subseteq \Omega_{n-1}
\end{equation}
for any $n \geq 1$.
Thus for any $U \in \mathcal{U}_{n}$ we have
$$\textup{diam}_{\rho'} (U) < 2^{-\lfloor \frac{n}{M} \rfloor}$$
for any $n \geq 0$, as claimed.
Indeed, we have proved that for any $U \in \mathcal{U}_{n}$ and for any $n \geq 0$ we have $\textup{diam}_{\rho'}(U) \leq C \theta^n$, with $C=2>0$ and $\theta = 2^{-1/M}<1$.
\end{proof}

Now let us prove Theorem \ref{T:exp-contr} using a similar procedure.

\begin{proof}[Proof of Theorem \ref{T:exp-contr}]
As in the proof of Proposition \ref{P:Frink-exp}, it is sufficient to check the hypotheses of Frink's lemma.

By Lemma \ref{L:chain}, the following holds:
there exists $N$ such that for all $U_1',U_2' \subset
\mathcal{U}_{N}$, if $U_1'\cap U_2'\not=\emptyset$ then there exists $U\in \mathcal{U}_0$ with
$U_1'\cup U_2'\subset U$.

Next,  given three sets $U_1', U_2', U_3'\in
\mathcal{U}_{2N}$ with $U_i'\cap U_{i+1}'\not=\emptyset$ for $i=1,2$,
we find
$U''_1\in \mathcal{U}_N$ containing $U'_1\cup U'_2$ and
$U''_2\in \mathcal{U}_N$ containing $U'_2\cup U'_3$ (using the previous fact for two sets and applying
$f^{-N}$). Finally we find $U\in\mathcal{U}_0$ with $U''_1 \cup U''_2 \subseteq U$.
Thus we obtain condition (c) of Frink's lemma for $\Omega_0 := X \times X$ and
$\Omega_n := \{ (x, y) \in X \times X \ : \ \exists U \in \mathcal{U}_{2 n N} \textup{ s.t. }x, y \in U \}$ for $n \geq 1$.
Condition (b) is a consequence of Lemma \ref{L:basis}.

The rest of the argument follows as in the proof of Proposition  \ref{P:Frink-exp}.
\end{proof}

\subsection{Equilibrium states}

Let us now recall some basic notions from ergodic theory. For more details, see e.g. \cite{PU}.

Let $f : X \to X$ be a continuous map of a compact metric space. A probability measure $\mu$ on $X$ is $f$\emph{-invariant} if $f_\star \mu = \mu$,
and we let $M(f)$ be the set of $f$-invariant probability measures on $X$.
We denote as $h_\mu(f)$ the metric entropy of $f$ with respect to $\mu$.

Now, consider a continuous function $\varphi : X \to \mathbb{R}$, which we call a \emph{potential}.
The \emph{topological pressure} of $f$ with potential $\varphi$ is defined as
$$P_{top}(\varphi) := \sup_{\mu \in M(f)}  \left\{ h_\mu(f) + \int_X \varphi \ d\mu  \right\}.$$
Note that $P_{top}(\varphi)$ may also be defined topologically (\cite{PU}, Section 3.2).
An $f$-invariant probability measure $\mu$ on $X$ is an \emph{equilibrium state} for $\varphi$ if it realizes the supremum, namely if
$$P_{top}(\varphi) = h_\mu(f) + \int_X \varphi \ d\mu.$$

In the following, we will consider a weakly expanding system $f : W_1 \to W_0$, and study the equilibrium states on its repellor $X$.

\subsection{The geometric coding tree}

Let us now construct a symbolic coding for a coarse expanding dynamical system. We start by proving the path lifting property.

\begin{lemma} \label{L:lift}
Let $f : Y \to Z$ be a finite branched cover, and let $\gamma$ be a continuous arc in $Z$ which is disjoint from the set of branch values $V_f$.
Let $x = \gamma(0)$, and $\widetilde{x}$ such that $f(\widetilde{x}) = x$. Then there exists a continuous arc $\widetilde{\gamma}$ in $Y$ such that $f(\widetilde{\gamma}) = \gamma$ and $\widetilde{\gamma}(0) = \widetilde{x}$.
\end{lemma}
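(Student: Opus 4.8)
The plan is to prove the path lifting property for finite branched covers by first reducing to the case of lifting short subarcs through a single ``good'' neighborhood, and then gluing these local lifts together by a standard connectedness/maximality argument. Since $\gamma$ avoids $V_f$, every point of $\gamma$ lies in the set of principal values, so along the whole arc the fiber has exactly $d = \deg(f)$ points, and near each such fiber point $f$ is a local homeomorphism (by the corollary to Lemma 2.1.2 quoted above, every $y \notin B_f$ has a neighborhood on which $f$ is a homeomorphism onto its image).

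First I would set up the local lifting. Fix $t_0 \in [0,1]$ and let $z_0 = \gamma(t_0)$. Since $z_0 \notin V_f$, the fiber $f^{-1}(z_0) = \{y_1,\dots,y_d\}$ consists of $d$ non-critical points; choose pairwise disjoint open neighborhoods $U_i \ni y_i$ on each of which $f$ restricts to a homeomorphism $U_i \to f(U_i)$. Using that $f$ is open, closed and proper (Lemma 2.1.2), I would find an open neighborhood $V$ of $z_0$ with $V \subseteq \bigcap_i f(U_i)$ and $f^{-1}(V) \subseteq \bigcup_i U_i$: indeed, $Y \setminus \bigcup_i U_i$ is closed, hence $f(Y \setminus \bigcup_i U_i)$ is closed and misses $z_0$, and intersecting its complement with the open sets $f(U_i)$ gives such a $V$. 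Then over $V$ the map $f$ is a ``trivial cover'' of $d$ sheets, each sheet a homeomorphism; so for any point $\widetilde{w}$ over $\gamma(t_0)$, and any subarc of $\gamma$ lying inside $V$ and passing through $\gamma(t_0)$, the lift starting at $\widetilde{w}$ is uniquely obtained by composing with the inverse of the appropriate sheet homeomorphism. By compactness of $[0,1]$ and a Lebesgue-number argument for the open cover $\{\gamma^{-1}(V) : V \text{ as above}\}$, there is a partition $0 = t_0 < t_1 < \dots < t_m = 1$ such that each $\gamma([t_{j-1}, t_j])$ lies in one such $V$.

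Next I would build the global lift inductively on the partition: having defined $\widetilde{\gamma}$ on $[0, t_{j-1}]$ with $\widetilde{\gamma}(0) = \widetilde{x}$, the point $\widetilde{\gamma}(t_{j-1})$ lies over $\gamma(t_{j-1})$, which lies in some trivializing neighborhood $V$ for the block $[t_{j-1}, t_j]$; I extend $\widetilde{\gamma}$ over $[t_{j-1}, t_j]$ by following the unique sheet through $\widetilde{\gamma}(t_{j-1})$. Continuity at each $t_j$ is automatic since the two local definitions agree at the shared endpoint and each is continuous on its closed block. After $m$ steps this yields a continuous arc $\widetilde{\gamma}$ in $Y$ with $f \circ \widetilde{\gamma} = \gamma$ and $\widetilde{\gamma}(0) = \widetilde{x}$.

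The main obstacle is the construction of the trivializing neighborhood $V$ — verifying carefully, from the finite-branched-cover axioms and Lemma 2.1.2, that one really can choose $V$ small enough that $f^{-1}(V)$ splits into exactly $d$ disjoint homeomorphic sheets above it. This uses properness/closedness of $f$ (so that the image of the closed complement of the $U_i$'s is closed and avoids $z_0$) together with the local-homeomorphism property away from $B_f$ and condition (1) in the definition of finite branched cover (which guarantees $\#f^{-1}(z_0) = d$ exactly since $z_0 \notin V_f$). Everything else — the Lebesgue-number decomposition of $[0,1]$ and the inductive gluing — is routine. I would also remark that the lift is in fact unique given $\widetilde{x}$, though the statement only asserts existence.
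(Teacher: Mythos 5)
Your proof is correct and follows essentially the same route as the paper's: since $\gamma$ avoids $V_f$, every preimage point is non-critical, so $f$ is a local homeomorphism there, each point of $\gamma$ has an evenly covered neighborhood, and compactness of the arc lets one assemble the local lifts. Your write-up is in fact more detailed than the paper's (the explicit stack-of-records construction of the trivializing neighborhood via closedness of $f$, the Lebesgue-number partition, and the inductive gluing are all left implicit in the paper's ``since $\gamma$ is compact'' remark), but the underlying argument is the same.
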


\begin{proof}
Let $p \in \gamma$, and $\widetilde{p}$ a preimage of $p$. Then since $\gamma$ is disjoint from $V_f$, the local degree of $f$ at $\widetilde{p}$ is $1$.
Hence, there exists a neighborhood $U$ of $\widetilde{p}$ such that $f : U \to f(U)$ is injective, open, and closed, hence a homeomorphism; moreover, $f(U)$ is open.
Hence, one can lift $\gamma \cap f(U)$ to an arc $f^{-1}(\gamma) \cap U$ which contains $\widetilde{p}$. Thus, any point $p$ in $\gamma$ has a neighboorhood
$U_p$ over which $\gamma$ can be lifted, and moreover such that $f^{-1}(U_p)$ is the union of $d$ disjoint open sets which are homeomorphic to $U_p$; since $\gamma$ is compact, this implies that  the entire $\gamma$ can be lifted.
\end{proof}

The key point in our approach is that one constructs a semiconjugacy of a weakly coarse expanding system of degree $d$ to the shift map on $d$ symbols.

Let $\Sigma := \{1, \dots, d\}^\mathbb{N}$ be the space of infinite sequences of $d$ symbols, and $\sigma : \Sigma \to \Sigma$ the left shift.
If $\eta := (\eta_1, \dots, \eta_n) \in \{1, \dots, d\}^{n}$ is a finite sequence, the \emph{cylinder} associated to $\eta$ is the set $C(\eta) := \{ (\epsilon_i) \in \Sigma \ : \ \epsilon_i = \eta_i \textup{ for all }1 \leq i \leq n \}$. The integer $n$ is called the \emph{depth} of the cylinder $C(\eta)$. Note that for any $n \geq 1$, the set $\Sigma$ is the disjoint union of $d^n$ cylinders of depth $n$, and all cylinders are both open and closed.
Finally, let us equip $\Sigma$ with the metric $\rho((\epsilon_i), (\epsilon'_i)) := 2^{- \inf \{ k \geq 0 \ : \ \epsilon_k \neq \epsilon'_k \}}$,
which we call the \emph{standard metric} (with the convention $2^{- \infty} = 0$).

\begin{proposition} \label{P:semiconjugacy}
Let $f : W_1 \to W_0$ be a weakly coarse expanding system of degree $d$. 
Then there exists a H\"older continuous semiconjugacy $\pi : \Sigma \to X$
such that $\pi \circ \sigma = f \circ \pi$.
Moreover, if $f$ is locally eventually onto, then $\pi$ is surjective.
\end{proposition}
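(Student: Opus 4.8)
The construction is a geometric coding tree in the spirit of Przytycki \cite{Feliks}. First I fix a basepoint $z_* \in X \setminus V_{f^\infty}$ (i.e.\ avoiding all branch values $V_{f^n}$, $n\ge 1$, which is possible since each $V_{f^n}$ is nowhere dense and $X$ is not a single point, so by strong path connectedness $X$ still has points of a fixed open set avoiding a countable union of nowhere dense sets; more carefully, one picks $z_*$ so that its full backward orbit avoids $B_f$). Since $\deg(f) = d$ and $z_*$ is a principal value, it has exactly $d$ preimages $z_1,\dots,z_d$ under $f$. For each $j$ I choose an arc $\gamma_j$ in $W_0$ from $z_*$ to $z_j$ disjoint from $V_f$ — this is where strong path connectedness enters, since I must dodge the countable set $\bigcup_n V_{f^n}$ together with the countable set of already-chosen vertices. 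Then I define the coding tree recursively: a vertex at level $n$ is labelled by a word $\eta = (\eta_1,\dots,\eta_n)$, and given a vertex $z_\eta$ at level $n$ together with the connecting edge (an arc from $z_{\sigma\eta}$ to $z_\eta$ where $\sigma\eta$ drops the last symbol), I lift each $\gamma_j$ by Lemma \ref{L:lift} starting at $z_\eta$ to get $d$ new arcs and $d$ new vertices $z_{\eta j}$. The lift exists and is unique because the relevant arcs avoid $V_f$.

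Next I show the tree stays inside the repellor and that branches converge. By construction every vertex $z_\eta$ at level $n$ satisfies $f^n(z_\eta) = z_*$, and since $z_* \in X = \bigcap_k f^{-k}(W_1)$ one checks $z_\eta \in W_1$ for all smaller levels too, so in fact all vertices lie in $X$ (using $X = f^{-1}(X)\cap W_1$ inductively). For a fixed infinite sequence $\epsilon = (\epsilon_i) \in \Sigma$, consider the vertices $z_{\epsilon|_n}$ at level $n$. The edge joining $z_{\epsilon|_{n-1}}$ to $z_{\epsilon|_n}$ is the lift of some $\gamma_j$ under $f^{n-1}$; since $\gamma_j$ is a fixed compact arc contained in finitely many elements of the cover $\mathcal{U}_0$, its $f^{n-1}$-lift is contained in an element of $\mathcal{U}_{n-1}$ (or a bounded-by-$\deg$ chain of such), whose $\rho$-diameter is $\le C\theta^{n-1}$ by Theorem \ref{T:exp-contr}. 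Hence $\rho(z_{\epsilon|_{n-1}}, z_{\epsilon|_n}) \le C'\theta^n$, so $(z_{\epsilon|_n})$ is Cauchy and I set $\pi(\epsilon) := \lim_n z_{\epsilon|_n} \in X$. The geometric decay immediately gives that $\pi$ is H\"older: if $\epsilon,\epsilon'$ agree on the first $n$ symbols, $\rho(\pi(\epsilon),\pi(\epsilon')) \le \sum_{k>n} C'\theta^k = C''\theta^n$, and $\theta^n$ is a power of the standard metric $d_\Sigma(\epsilon,\epsilon') \asymp \lambda^{-n}$ on $\Sigma$. The semiconjugacy relation $\pi\circ\sigma = f\circ\pi$ follows because applying $f$ to the branch $(z_{\epsilon|_n})_n$ yields (after reindexing) the branch $(z_{(\sigma\epsilon)|_{n-1}})_n$: the defining lifts were set up so that $f(z_{\eta j})$ traces $\gamma_{j}$ back from $z_{\sigma\eta}$ landing appropriately — more precisely $f(z_{\eta})= z_{\eta'}$ where $\eta'$ is $\eta$ with its \emph{first} symbol removed, once one checks the indexing convention; continuity of $f$ then passes to the limit.

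Finally, surjectivity under the locally eventually onto hypothesis: the image $\pi(\Sigma)$ is compact (continuous image of a compact set) and nonempty, hence closed in $X$; it is $f$-invariant and contains the vertex set, which accumulates everywhere the tree reaches. To see $\pi(\Sigma) = X$, note $\pi(\Sigma)$ is a nonempty closed $f$-invariant subset; pick any $x \in X$ and any small neighborhood $W$ of $x$ — by the locally eventually onto property $f^n(W) = X$ for some $n$, and since $\pi(\Sigma)$ meets $W$ (one shows the vertex set is dense: the vertices at level $n$ are the full $f^{-n}(z_*)$, which by [Expansion] plus local-eventually-onto become dense as $n\to\infty$), applying $f^n$ shows $f^n(\pi(\Sigma)\cap W) \subseteq \pi(\Sigma)$ hits a full neighborhood; iterating and using compactness forces $\pi(\Sigma) = X$. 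I expect the main obstacle to be the careful bookkeeping in the recursive vertex-labelling — ensuring simultaneously that (i) all connecting arcs can be chosen disjoint from $\bigcup_n V_{f^n}$ using strong path connectedness at the base step only, then propagating disjointness from $V_f$ down the tree by the lifting lemma, and (ii) the indexing is consistent so that the shift really corresponds to $f$, i.e.\ the first-symbol-vs-last-symbol convention is handled uniformly. The density-of-vertices argument needed for surjectivity is the other point requiring care, since it is where [Expansion] and [Irreducibility] genuinely combine.
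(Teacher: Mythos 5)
Your construction is the same geometric coding tree as the paper's proof, and the parts concerning the tree, convergence of branches, H\"older continuity and the semiconjugacy relation are essentially correct. Two small imprecisions there: (i) at level $n\ge 2$ you cannot apply Lemma \ref{L:lift} to $\gamma_j$ ``starting at $z_\eta$'', because $z_\eta$ is an $f^n$-preimage of the basepoint, not an $f$-preimage; the edge must be obtained by lifting $\gamma_j$ under $f^n$ (equivalently, by lifting once more the edge emanating from $f(z_\eta)$), which is exactly why the arcs must avoid $\bigcup_n V_{f^n}$ — you impose the right hypothesis, so this is only a matter of phrasing. (ii) The uniform bound on the number of elements of $\mathcal{U}_n$ needed to cover a lifted edge is not ``bounded by $\deg$''; it is obtained (as in the paper) by pulling the cover $\mathcal{U}_0$ back to $[0,1]$ along the parametrization of $\gamma_j$, extracting a finite subcover by $k$ intervals, and noting each interval lifts into a single element of $\mathcal{U}_n$; the constant is this $k$, depending on $\gamma_j$ and $\mathcal{U}_0$ only.

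The genuine gap is in the surjectivity argument. You assert that $\pi(\Sigma)$ ``contains the vertex set'', but before surjectivity is known there is no reason a vertex $z_\eta$ should be the limit of any infinite branch, so this is unjustified; and the closing step — ``$f^n(\pi(\Sigma)\cap W)\subseteq\pi(\Sigma)$ hits a full neighborhood; iterating and using compactness forces $\pi(\Sigma)=X$'' — is not a valid deduction: from $f^n(W)=X$ you would need $W\subseteq\pi(\Sigma)$ (i.e. nonempty interior), not merely $\pi(\Sigma)\cap W\neq\emptyset$, to conclude $X\subseteq\pi(\Sigma)$. The correct assembly, which is the paper's and uses only ingredients you already have, is: the level-$n$ vertices are exactly $f^{-n}(z_*)$, and by your tail estimate every level-$n$ vertex lies within $C''\theta^{\,n}$ of $\pi(\Sigma)$ (extend its label to any infinite word). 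Given $x\in X$, the locally eventually onto property yields, for every $k$, some $n_k$ (which may be taken $\to\infty$ since $f(X)=X$) and a level-$n_k$ vertex $w_k=z_{n_k}(\alpha_k)$ with $\rho(x,w_k)<1/k$; then either conclude directly that $x\in\overline{\pi(\Sigma)}=\pi(\Sigma)$, or, as the paper does, pass to a subsequence so that the finite words $\alpha_k$ converge to an infinite word $\alpha$ and use the exponential contraction to get $\pi(\alpha)=\lim_k z_{n_k}(\alpha_k)=x$. So the surjectivity claim is true and close at hand, but your written argument for it does not prove it.
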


\begin{proof}
The construction is based on the idea of ``geometric coding tree" as in \cite{Feliks}.
Namely, pick $w \in X \setminus P_f$, which exists since $X$ in uncountable, and let $w_1, \dots, w_d$ be all its preimages. 
They are in $X$ by backward invariance of $X$.
For each $i = 1, \dots, d$, choose a continuous path $\gamma_i$ in $W_0$ connecting $w$ and $w_i$ and avoiding $P_f$.
This exists since the space is strongly path connected and the set $P_f$ is countable.

For each sequence $\alpha = (i_1, i_2, \dots) \in \Sigma$, define $z_n(\alpha)$ by letting $z_0(\alpha) := w_{i_1}$ and $z_n(\alpha)$ for $n \geq 1$ inductively as follows.
Let $\gamma_n(\alpha)$ be a curve which is the branch of $f^{-(n-1)}(\gamma_{i_n})$ such that one of its ends is $z_{n-1}(\alpha)$.
Such lifts exist by Lemma \ref{L:lift}, since we chose the curves $\gamma_i$ to be disjoint from $P_f$. Then define $z_n(\alpha)$ as the other end of $\gamma_n(\alpha)$. 


Now, since $X$ is the intersection of the nested compact sets $K_n := f^{-n}(\overline{W}_1)$, there exists $n_0$ such that every 
curve $\gamma_{n_0}(\alpha)$ for $\alpha \in \Sigma$ is contained in the union $\bigcup_{U \in \mathcal{U}_0} U$.

Note that there are $d^{n_0}$ distinct curves of form $\gamma_{n_0}(\alpha)$, parameterized by finite sequences of length $n_0$:
let us denote them $\gamma_{n_0}(\alpha_i)$ with $i = 1, \dots, d^{n_0}$. 
For each $i = 1, \dots, d^{n_0}$, we write $f_i :[0,1] \to W_0$ to denote the continuous map whose image is 
$f_i([0,1]) = \gamma_{n_0}(\alpha_i) \subset W_0$.
Note that the preimage of $\mathcal{U}_0$ under $f_i$ is an open cover of $[0,1]$. Take a refinement of that cover by open intervals, call it $\mathcal{V}^i$. 
We then extract a finite subcover $\{V^i_j \in \mathcal{V}^i : 1 \leq j \leq k_i\}$ for some $k_i$. Set $k=\max\{k_i, i=1, \dots , d^{n_0} \}$.
Now fix $n$, let $F_i : [0,1] \to W_0$ be a lift of $f_i$ under $f^n$, and let $\Gamma_i = F_i([0,1])$. Note that for each $V \in \mathcal{V}$, its image under any lift $F_i$ as above is a connected subset of $f^{-n}(U)$ for some $U \in \mathcal{U}_0$.
Therefore for each $V \in \{V_j^i : 1 \leq j \leq k_i\}$ the image $F_i(V)$ is a subset of some element of $\mathcal{U}_n$.
Thus we have at most $k$ open arcs that cover $\Gamma_i$, each of them in an element of $\mathcal{U}_n$. 

Applying Lemma \ref{L:chain} repeatedly and pulling back by $f^n$, there exists $N$ such that for any 
$n \geq N$,
if $U(1), \dots, U(k)$ is a ``chain" of elements of $\mathcal{U}_{n}$ with 
$U(j) \cap U(j+1) \neq \emptyset$ for any $j = 0,\dots, k-1$, then there exists $U \in \mathcal{U}_{n-N}$ with $U(j) \cup  U(j+1) \subseteq U$.

Now, using the claim proved in the paragraph above, for any $n \geq N$ 
by the exponential contraction property
of Theorem \ref{T:exp-contr}, we get, summing distances along consecutive pairs in the chain,
\[
\rho(z_n(\alpha), z_{n-1}(\alpha)) \leq C k \theta^{n-N-n_0},
\]
and hence
$$\lim_{n \to \infty} z_n(\alpha)$$
exists, and we define $\pi(\alpha)$ as the limit. By construction the map $\pi$ satisfies $f  \circ \pi = \pi \circ \sigma$ and $\pi(\alpha) \in X$.
To prove the H\"older continuity, let $\alpha = (i_1, i_2,\dots)$ and $\beta = (j_1, j_2, \dots)$ be two sequences, and let
$r := \min \{ s \ : \ i_s \neq j_s \}$, so that $z_{r-1}(\alpha) = z_{r-1}(\beta)$ if $r > 1$. Hence
$$\rho( \pi(\alpha), \pi(\beta) ) \leq \rho(\pi(\alpha), z_{r-1}(\alpha)) + \rho(z_{r-1}(\beta), \pi(\beta)) \leq 2 C' \theta^r $$
with $C' = (k C \theta^{-N - n_0})/(1 - \theta)$, which is what we need, as the distance between $\alpha$ and $\beta$ in the symbolic space is $2^{-r}$.

\smallskip 

To prove the coding map $\pi$ is surjective, let $x \in X$. Since $f$ is locally eventually onto, for each $k > 0$ there exists $n_k$ such that
$f^{n_k}(B(x, \frac{1}{k})) \supseteq X$.
In particular, there exists $w_k \in X$ with $\rho(x, w_k) < \frac{1}{k}$ and such that $f^{n_k}(w_k) = w$.
This means, there exists a finite sequence $\alpha_k = (i_1^{(k)}, \dots, i_{n_k}^{(k)})$ such that $w_k = z_{n_k}(\alpha_k)$ and
$\lim_{k \to \infty} w_k = x$.
By compactness of the shift space, up to passing to a subsequence there exists an infinite sequence $\alpha = (i_1, \dots, i_n, \dots)$
such that $\alpha_k \to \alpha$.

That is, for each $m$ there exists $k$ such that $\alpha$ and $\alpha_k$ coincide for the first $m$ symbols and $n_k\ge m$, hence by the exponential contraction property
$$\rho(z_m(\alpha), z_{n_k}(\alpha_k)) \leq C' \theta^m$$
for some $\theta < 1$. Then
$$\lim_m z_m(\alpha) = \lim_k z_{n_k}(\alpha_k) = x$$
thus $\pi(\alpha) = x$, as claimed.
\end{proof}

\section{No entropy drop}

Let $f : W_1 \to W_0$ be a finite branched cover with repellor $X$, let $\rho$ be a metric on $X$, and let $\varphi: (X, \rho) \to \mathbb{R}$
be a H\"older continuous potential. We now show that if there are no periodic critical points, then the entropy of any equilibrium state on $\Sigma$ is the same as the entropy of its pushforward on $X$.

\medskip
Inspired by \cite[Lemma 4]{Feliks}, we give the following definition.
We say $x$ is an \emph{$\epsilon$-singular point} if $\rho(x, B_f \cap X) < \epsilon$, that is if it lies within distance $\epsilon$ of a branch point.
Let $(x_i)_{0 \leq i \leq n}$ be a finite orbit segment, i.e. a sequence of points such that $f(x_i) = x_{i+1}$. Then we say $i$ is an $\epsilon$-singular time if
$x_i$ is an $\epsilon$-singular point.

\begin{lemma} \label{L:bounded-singular}
Suppose that no critical point is periodic.
Then for any $0 < \zeta < 1$, there exists $\epsilon > 0$ such that for any $x \in X$ and any finite orbit segment $(x_i)_{0 \leq i \leq n}$ with $x_n  = x$ we have
$$\# \{0 \leq  i \leq n \ : \ i \textup{ is an } \textup{$\epsilon$-singular time} \} \leq \zeta n + p,$$
where $p$ is the number of critical points.
\end{lemma}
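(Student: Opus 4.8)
The plan is to track how often an orbit segment can pass near the branch set, exploiting the fact that no branch point is periodic to show that visits to small neighborhoods of $B_f$ must be spaced out along any orbit. The core idea: if $x$ is an $\epsilon$-singular point, then $x$ lies within $\epsilon$ of a point $y$ with $f(x) = f(y)$, and such a $y$ forces $f(x)$ (equivalently $x_{i+1}$) to be near a branch value — so, refining $\epsilon$, an $\epsilon$-singular point lies in a small neighborhood of $B_f$. Since $B_f$ is finite and no branch point is periodic, for each branch point $c$ the forward orbit of $c$ never returns to $c$; by continuity and compactness, there is a uniform gap $T = T(\zeta)$ such that once an orbit enters a sufficiently small neighborhood of $B_f$ it cannot re-enter a (possibly smaller) neighborhood of $B_f$ for at least $T$ steps, where $T$ is chosen so large that $1/T < \zeta$. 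Then among any $n$ consecutive times, singular times occupy at most $n/T + (\text{const}) \le \zeta n + p$ of them.

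**First I would** make precise the implication ``$\epsilon$-singular $\Rightarrow$ close to $B_f$''. Fix a branch point $c \in B_f$ and use the local structure of a finite branched cover at $c$: there are compact neighborhoods $U$ of $c$, $V$ of $f(c)$ with the local-degree sum property. Away from $\bigcup_{c \in B_f} U_c$, the map $f$ is a local homeomorphism with a uniform injectivity radius — i.e., there is $\delta_0 > 0$ so that $f$ restricted to any $\delta_0$-ball centered outside $\bigcup U_c$ is injective. Hence if $x$ is $\epsilon$-singular with $\epsilon \le \delta_0$, then $x \in \bigcup_{c} U_c$; shrinking the $U_c$ (which we may do by choosing $\epsilon$ small, using that the local homeomorphism property degrades continuously as we approach $B_f$), we get: for every $r > 0$ there is $\epsilon(r) > 0$ such that every $\epsilon(r)$-singular point lies in the $r$-neighborhood $N_r(B_f) := \{ x : \rho(x, B_f) < r\}$.

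**Next** comes the recurrence argument. For each $c \in B_f$, since $c$ is not periodic, $f^j(c) \neq c$ for all $j \ge 1$; I claim there is $r_0 > 0$ and an integer $T$ with $1/T < \zeta$ such that no orbit segment can have two singular times within $T$ steps of each other once $\epsilon$ is small enough. Concretely: choose $r_0$ small enough that the closed balls $\overline{B(c, r_0)}$, $c \in B_f$, are pairwise disjoint, and then — for each pair $c, c' \in B_f$ and each $1 \le j \le T$ — small enough that $f^j(\overline{B(c, r_0)})$ is disjoint from $B(c', r_0')$ for a suitable $r_0' < r_0$; this is possible because $f^j(c) \notin B_f$ when... wait, that need not hold. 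The correct statement: the set of branch points is $B_f$ and no branch point is periodic, but $f^j(c)$ could be another branch point. Still, the grand orbit issue is handled by finiteness: consider the finite set $B_f$ and the finite ``first-return-type'' data. The clean way: since $B_f$ is finite and $f|_{B_f \cup \{\text{its forward images hitting } B_f\}}$ has no cycles... Actually $B_f$ finite plus no periodic branch point means the directed graph on $B_f$ with an edge $c \to c'$ whenever some forward iterate of $c$ equals $c'$ is acyclic, hence $B_f$ splits into at most $p$ ``chains''; the relevant bound on how a chain is traversed gives the $+p$ term. I would make this precise: let $T$ be any integer with $1/T < \zeta$; for that fixed $T$, by non-periodicity and continuity there is $r_0 > 0$ so that for each $c \in B_f$ and each $1 \le j \le T$ with $f^j(c) \notin B_f$, the image $f^j(N_{r_0}(c))$ misses $N_{r_0'}(B_f)$, while the (finitely many) $j$ with $f^j(c) \in B_f$ are absorbed into the chain-counting and contribute the additive constant $p$. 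Finally pick $\epsilon := \epsilon(r_0')$ from the previous step. Then along any orbit segment, between consecutive singular times in the ``same chain'' there are at least $T$ steps, and there are at most $p$ chains, giving the count $\le n/T + p \le \zeta n + p$.

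**The main obstacle** I expect is the bookkeeping around branch points that map to other branch points: the naive ``no two singular times within $T$ steps'' is false if $f(c) = c'$ with $c, c'$ both branch points, and one must see that the finiteness of $B_f$ together with acyclicity of the forward-orbit relation on $B_f$ limits the number of ``consecutive'' singular times to something bounded by $p$, independent of $n$ — this is exactly where the additive term $p$ in the statement comes from, and it needs to be argued carefully rather than waved away. A secondary technical point is verifying the uniform injectivity radius for $f$ away from $B_f$ and the continuous degradation of the local-homeomorphism property as one approaches $B_f$; this should follow from the compactness of $\overline{W_1}$, the openness/closedness of $f$ (the cited Lemma of Haïssinsky–Pilgrim), and condition (2) in the definition of finite branched cover, but writing it cleanly requires some care.
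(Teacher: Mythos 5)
Your step 1 ($\epsilon$-singular $\Rightarrow$ within $\eta$ of a critical point, via a uniform injectivity radius away from $B_f$) is exactly the paper's first claim and is fine. The gap is in the final counting. Your plan is to take $T$ with $1/T<\zeta$ and argue that the total number of singular times is at most $n/T+p$, with the within-chain bursts (orbit near $c$, then near $c'=f^{j}(c)\in B_f$, etc.) ``absorbed into the additive constant $p$.'' That absorption is false: a burst along a chain can contribute up to $p$ singular times, and such bursts can recur on essentially every pass of the orbit near the branch set, i.e.\ on the order of $n/T$ times. So the best bound your bookkeeping can yield is of the form $p\left(\tfrac{n}{T}+1\right)=\tfrac{p}{T}\,n+p$, not $\tfrac{n}{T}+p$; with your choice $1/T<\zeta$ this exceeds $\zeta n+p$ as soon as $p>1$. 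Even granting your (unproved, and in fact false) claim that consecutive singular times \emph{within the same chain} are at least $T$ apart, summing over the at most $p$ chains still gives $p(n/T+1)$, so the stated conclusion does not follow from your own setup. You also misidentify the source of the $+p$: in the correct count it is just the rounding term per critical point, not a global allowance for chain traversal.

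The fix is simpler than your chain/acyclicity machinery and is what the paper does: given $\zeta$, choose the spacing parameter $k$ with $p/k<\zeta$ (not $1/k<\zeta$). For each critical point $c$, non-periodicity gives $\beta=\rho\bigl(c,\{f^{l}(c)\}_{1\le l\le k}\bigr)>0$, and continuity gives $\eta<\beta/2$ with $f^{l}(B(c,\eta))\subseteq B(f^{l}(c),\beta/2)$, so an orbit that enters $B(c,\eta)$ cannot re-enter $B(c,\eta)$ for at least $k$ steps. One only needs non-return to the neighborhood of the \emph{same} critical point; branch points mapping to branch points cause no trouble, because the count is done separately for each $c$: each critical point accounts for at most $\lceil (n+1)/k\rceil$ singular times, so the total is at most $p\lceil (n+1)/k\rceil\le (p/k)n+p<\zeta n+p$. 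With that change your argument closes; as written, the key estimate does not.
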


\begin{proof}
Let $p$ be the number of critical points. Given $\zeta < 1$, let us choose $k$ such that
$\frac{p}{k} < \zeta$. Fix a critical point $c$, and let $\beta := \rho(c, \{ f^n(c) \}_{1\leq n \leq k}) > 0$.
We denote as  $B(x, r)$ the open ball of radius $r$ and center $x$ for the metric $\rho$.

We readily see that there exists $\epsilon > 0$ such that if $y \in B(c, \epsilon)$, then $f^l(y) \notin B(c, \epsilon)$ for any $l \leq k$.
Indeed, by continuity of $f$, there exists $0 < \epsilon < \beta/2$ such that $f^l(B(c, \epsilon)) \subseteq B(f^l(c), \beta/2)$ for any $l \leq k$.
Then if $y \in B(c, \epsilon)$ then $f^l(y) \in B(f^l(c), \beta/2)$, which is disjoint from $B(c, \epsilon)$ since $\epsilon < \beta/2$.

Moreover, we further shrink $\epsilon$ so that if $x \in X$ and the ball $B(x,\epsilon)$ does not intersect the branch set $B_f$, then the 
restriction $f \vert_{B(x,\epsilon)}$ is injective. 


Now suppose $(x_i)$ is an orbit segment, and $i < j$ are two singular times such that $x_i$ and $x_j$ lie within distance $\eta$ of the same critical point $c$.
Then by the argument above $j - i \geq k$.
Thus, the number of singular times is at most $p \lceil\frac{n+1}{k}\rceil \leq p (\frac{n}{k} + 1) < \zeta n + p$, as desired.
\end{proof}

Let us denote as $\sigma : \Sigma \to \Sigma$ the shift map on the symbolic space and $f : W_1 \to W_0$ the topological branched cover,
with repellor $X$.
Moreover, let $\pi : \Sigma \to X$ denote the semiconjugacy from Proposition \ref{P:semiconjugacy}.

\begin{lemma} \label{L:h-exp}
Suppose that no critical point is periodic.
For any $x \in X$, denote as $S_{n, x}$ the number of cylinders of depth $n$ which intersect $\pi^{-1}(x)$.
Then
$$\lim_{n \to \infty} \frac{1}{n} \log \sup_{x \in X} S_{n, x} = 0.$$
\end{lemma}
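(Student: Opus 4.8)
The plan is to count, for a fixed $x \in X$, how many cylinders of depth $n$ can meet $\pi^{-1}(x)$, and to show that this number grows subexponentially. The key observation is that two cylinders $C(\eta)$ and $C(\eta')$ of depth $n$ with $\eta \neq \eta'$ can both meet $\pi^{-1}(x)$ only because of a ``collision'' somewhere along the coding tree: if $r := \min\{s : \eta_s \neq \eta'_s\}$, then $z_{r-1}(\alpha) = z_{r-1}(\beta)$ for $\alpha \in C(\eta)$, $\beta \in C(\eta')$, while $\pi(\alpha) = \pi(\beta) = x$, and the exponential contraction estimate from Proposition~\ref{P:semiconjugacy} forces $z_r(\alpha)$ and $z_r(\beta)$ to be two distinct preimages of the same point lying within distance $2kC\theta^r$ of each other. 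In other words, along the backward orbit $(x = y_0, y_1, \dots, y_{n-1})$ where $y_j := f^{n-j}$ applied appropriately (more precisely: for each depth-$n$ cylinder meeting $\pi^{-1}(x)$, the points $z_0(\alpha), z_1(\alpha), \dots$ form a backward orbit landing at $x$), a branching in the tree at level $r$ can only occur at an $\epsilon$-singular time, for $\epsilon$ as small as we like provided $r$ is large enough — and, up to absorbing finitely many small levels into a constant, at an $\epsilon$-singular time for a fixed $\epsilon$.

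Concretely, I would fix $\zeta \in (0,1)$ and let $\epsilon > 0$ be given by Lemma~\ref{L:bounded-singular}, then choose $N_0$ large enough that $2kC\theta^r < \epsilon$ for all $r \geq N_0$. Given a point $x$ and a depth-$n$ cylinder meeting $\pi^{-1}(x)$, consider the associated backward orbit segment $(z_0, z_1, \dots, z_n)$ (reversed, it is a forward orbit segment ending at a point mapping to $x$ under an appropriate iterate — one must be slightly careful that $\pi(\alpha)=x$ means $z_n(\alpha)$ is close to but not equal to $x$; instead work with the orbit of $z_n(\alpha)$, or reformulate in terms of the actual inverse branches, observing that $\pi^{-1}(x) \cap C(\eta) \neq \emptyset$ pins down the first $n$ inverse-branch choices up to the contraction error). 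The set of cylinders meeting $\pi^{-1}(x)$ is organized as a subtree of the full $d$-ary tree of depth $n$; branching can occur at level $r$ only if the corresponding node $z_{r-1}$ is within $\epsilon$ of one of its siblings, i.e. only at an $\epsilon$-singular time of the backward orbit, for $r \geq N_0$. At each branching time the branching is at most $d$-fold. Hence the number of depth-$n$ cylinders meeting $\pi^{-1}(x)$ is at most $d^{N_0} \cdot d^{(\#\ \epsilon\text{-singular times among } N_0 \le i \le n)} \leq d^{N_0} \cdot d^{\zeta n + p}$ by Lemma~\ref{L:bounded-singular}. Taking $\frac 1n \log$ and letting $n \to \infty$ gives $\limsup_n \frac1n \log \sup_x S_{n,x} \leq \zeta \log d$; since $\zeta$ was arbitrary, the limsup is $\le 0$, and it is trivially $\geq 0$, so it equals $0$.

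The main obstacle, I expect, is making rigorous the dictionary between ``branching of the cylinder subtree at level $r$'' and ``$\epsilon$-singularity of the backward orbit point at time $r$'', because $\pi$ is only a semiconjugacy and $\pi(\alpha)$ is a limit rather than one of the $z_n(\alpha)$: one has $z_r(\alpha) \neq z_r(\beta)$ (they are genuinely distinct since $\eta_r \neq \eta'_r$ and the tree construction keeps distinct symbols giving distinct lifts at the branching level) but one must verify $f(z_r(\alpha)) = f(z_r(\beta))$ — this holds because both equal $z_{r-1}(\alpha) = z_{r-1}(\beta)$ — and that $\rho(z_r(\alpha), z_r(\beta)) < \epsilon$, which follows from $\rho(z_r(\alpha), z_{r-1}(\alpha)) \le Ck\theta^r$ and the triangle inequality once $r \ge N_0$. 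One should also double-check that the relevant points genuinely lie in (or uniformly close to) $X$ so that Lemma~\ref{L:bounded-singular}, stated for orbit segments in $X$, applies — alternatively one enlarges $\epsilon$ slightly and works with a neighborhood of $X$, or notes that $z_i(\alpha) \in f^{-(n-i)}(W_1) \cap \dots$ lies on the coding tree which is contained in the relevant preimages. Once this correspondence is pinned down, the counting is immediate.
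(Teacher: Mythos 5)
Your overall strategy is the same as the paper's: combine Lemma~\ref{L:bounded-singular} with the exponential contraction of the coding tree, and count admissible depth-$n$ words by observing that ``splittings'' can only happen at $\epsilon$-singular times. However, the step you yourself single out as the crux is where your justification breaks down. You claim that at a first-disagreement level $r$ the two sibling points satisfy $f(z_r(\alpha))=f(z_r(\beta))$ ``because both equal $z_{r-1}(\alpha)=z_{r-1}(\beta)$''. In the geometric coding tree the parent is joined to the child by the lifted curve, not by applying $f$: one has $f(z_r(\alpha))=z_{r-1}(\sigma\alpha)$, which depends on the symbol window $(\alpha_2,\dots,\alpha_{r+1})$ and so differs from $f(z_r(\beta))=z_{r-1}(\sigma\beta)$ precisely because $\alpha_{r+1}\neq\beta_{r+1}$ (distinct words give distinct tree points at each level, since the curves avoid the branch values of all iterates). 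Thus the two children at a branching node are $\epsilon$-close but are \emph{not} two preimages of one point, and your dictionary ``branching at level $r$ $\Rightarrow$ $\epsilon$-singular time $r$ of the backward orbit'' is not established; the per-path branching bound for the prefix subtree, and hence the count $d^{N_0}d^{\zeta n+p}$, does not follow as written.

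The correct mechanism (implicit in the paper's proof, whose own indexing is admittedly sloppy at the same spot) is this: for two admissible words take their \emph{last} disagreement position $s$; then the forward orbits of the corresponding level-$n$ points, $f^m(z_n(\alpha))=z_{n-m}(\sigma^m\alpha)$, are distinct at time $s-1$ and merge at time $s$, so the time-$(s-1)$ point is genuinely $\epsilon$-singular --- but its distance to its partner is of order $\lambda^{\,n-s}$, controlled by the distance of $s$ to the deep end of the word, not to the root. This forces a different bookkeeping: the boundedly many uncontrolled symbols are the last $N_1$ ones, and one then counts as in the paper (first choose the set of $\epsilon$-singular times, at most $\binom{n}{\lfloor\zeta n\rfloor+p}$ possibilities, then the symbols at those times and at the final $N_1$ positions), or, equivalently, runs your subtree-branching count on the tree of \emph{reversed} words, where branchings do correspond to singular times of the leaf's own orbit; either way one recovers a bound of the form $d^{N_1}\binom{n}{\lfloor\zeta n\rfloor+p}d^{\zeta n+p}$ and concludes by letting $\zeta\to0$. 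Your side remark that the orbit points $z_i(\alpha)$ need not lie in $X$ (where $\rho$ and Lemma~\ref{L:bounded-singular} live) is a fair one, but it is a looseness shared by the paper's own argument; the substantive gap in your proposal is the false identity $f(z_r(\alpha))=z_{r-1}(\alpha)$ and the counting scheme built on it.
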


\begin{proof}
We shall use the notation $z_n(\alpha)$ for the vertices of the geometric coding tree as in the proof of Proposition
\ref{P:semiconjugacy}.
First, by exponential convergence of backward orbits, there exist $C > 0$, $\lambda < 1$ such that
$$\rho(z_n(\alpha), \pi(\alpha)) \leq C \lambda^n$$
for any $\alpha \in \Sigma$ and any $n \geq 0$.


Fix $\zeta$ with $0 < \zeta < 1$, and let $\epsilon > 0$ such that Lemma \ref{L:bounded-singular} is satisfied for $\zeta$.
Then there exists $N$ such that $C \lambda^m \leq \epsilon$ for any $m \geq N$.

Let $G := \{ z_n(\beta) \ : \ \beta \in \pi^{-1}(x) \}$.
Consider the subtree of the geometric coding tree formed by the union of the paths which connect each element of $G$ with the root $w$.
We say an integer $0 \leq k \leq n - 1$ is a \emph{branching level} for $G$ if there are two elements $z \neq z'$ in $G$ with $f^{k+1}(z) = f^{k+1}(z')$ but $f^{k}(z) \neq f^{k}(z')$.
Suppose that $k \leq n - N$ is a branching level for $G$, and let $z = z_n(\beta)$ and $z' = z_n(\beta')$ be two distinct points in $G$ with $f^{k+1}(z) = f^{k+1}(z')$ but $f^{k}(z) \neq f^{k}(z')$.
Note that by definition we have $f^{k}(x) = \pi(\sigma^{k} \beta)$
as well as $f^{k}(z) = z_{n-k}(\sigma^{k} \beta)$.

Now, by the above exponential convergence, we have
$$\rho(f^{k}(z), f^{k}(x))  = \rho(z_{n-k}(\sigma^{k}\beta)), \pi(\sigma^{k} \beta))  \leq C \lambda^{n-k} < \epsilon$$
and similarly
$$\rho(f^{k}(z'), f^{k}(x)) \leq C \lambda^{n-k} < \epsilon$$
hence $f$ is not injective on the ball of radius $\epsilon$ around $f^{k}(x)$, thus $k$ is an $\epsilon$-singular time for $x$.

Thus, a branching level for $G$ either satisfies $n - N \leq k \leq n - 1$ or is an $\epsilon$-singular time for $x$.
By Lemma \ref{L:bounded-singular} the number of $\epsilon$-singular times in the forward orbit $(x, f(x), \dots, f^n(x))$ is at most $\zeta n + p$, thus
the number of branching levels in $G$ is at most $N + \zeta n + p$.
Hence, the number of elements in $G$ is bounded above by
$$S_{n, x} = \# G \leq d^{N + \zeta n + p}$$
and
$$\limsup_{n \to \infty} \frac{1}{n} \log \sup_{x \in X} S_{n, x} \leq \zeta \log d.$$
By taking $\zeta \to 0$ we obtain that $\limsup \leq 0$. Further, since $S_{n, x} \geq 1$ the $\liminf$ is $\geq 0$,
which implies the claim.

\end{proof}

\begin{lemma} \label{L:no-drop}
Let $\mu$ be a $\sigma$-invariant measure on the symbolic space $\Sigma$, and let $\nu = \pi_\star \mu$ be the pushforward measure on $X$. Then
$$h_\mu(\sigma) = h_\nu(f).$$
\end{lemma}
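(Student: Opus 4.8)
The plan is to prove the two inequalities $h_\mu(\sigma) \geq h_\nu(f)$ and $h_\mu(\sigma) \leq h_\nu(f)$ separately. The first is entirely general and follows from the fact that $\pi$ is a continuous factor map (semiconjugacy): the pushforward of a $\sigma$-invariant measure is $f$-invariant, and entropy cannot increase under a factor map, so $h_\nu(f) \leq h_\mu(\sigma)$. This direction does not use the absence of periodic critical points. The content of the lemma is the reverse inequality, which says that the coding does not artificially inflate entropy; this is where Lemma \ref{L:h-exp} enters.

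For the reverse inequality I would use the Rokhlin-type formula relating the entropy of $\sigma$ to the entropy of $\nu$ plus the ``fiber entropy'' of the extension. More concretely, fix a finite partition $\mathcal{P}$ of $X$ (say into small pieces adapted to a generating partition for $f$ under $\nu$, with $\nu$-null boundaries), and consider the pulled-back partition $\pi^{-1}(\mathcal{P})$ of $\Sigma$ together with the natural generating partition $\mathcal{C}$ of $\Sigma$ into depth-one cylinders. Then
$$h_\mu(\sigma, \mathcal{C}) \leq h_\mu(\sigma, \pi^{-1}(\mathcal{P})) + h_\mu\big(\sigma, \mathcal{C} \mid \pi^{-1}(\mathcal{P})^{\vee}\big),$$
where the last term is a conditional entropy with respect to the $\sigma$-invariant sub-$\sigma$-algebra $\pi^{-1}(\mathcal{B}_X)$ generated by $\pi$. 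The first term on the right equals $h_\nu(f, \mathcal{P})$ since $\pi$ is a factor map, so it suffices to show the conditional term vanishes. By the Shannon--McMillan--Breiman / Rokhlin formalism, this conditional entropy is at most
$$\limsup_{n \to \infty} \frac{1}{n} \int_X \log\big(\#\{\text{cylinders of depth } n \text{ meeting } \pi^{-1}(x)\}\big) \, d\nu(x) = \limsup_{n\to\infty}\frac1n\int_X \log S_{n,x}\,d\nu(x),$$
and Lemma \ref{L:h-exp} gives $\sup_{x} S_{n,x} = e^{o(n)}$, so this limsup is $0$. Taking the supremum over partitions $\mathcal{P}$ and over generating $\mathcal{C}$ yields $h_\mu(\sigma) \leq h_\nu(f)$.

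The main obstacle I anticipate is making the bookkeeping of the conditional-entropy estimate rigorous: one must be careful that $S_{n,x}$ as defined in Lemma \ref{L:h-exp} genuinely controls the number of atoms of $\bigvee_{i=0}^{n-1}\sigma^{-i}\mathcal{C}$ that are not distinguished by $\pi^{-1}(\mathcal{B}_X)$, i.e. that the fiber $\pi^{-1}(x)$ is covered by at most $S_{n,x}$ depth-$n$ cylinders and hence the conditional partition restricted to a fiber has at most that many atoms. A clean way to package this is to invoke the standard lemma (see e.g.\ \cite{PU}) that if $\pi : (\Sigma,\mu,\sigma) \to (X,\nu,f)$ is a factor map and the number of depth-$n$ cylinders meeting a typical fiber grows subexponentially, then $\pi$ is an entropy-preserving (relative) isomorphism; alternatively one argues directly with $(n,\delta)$-separated sets, lifting an $(n,\delta)$-spanning set of $(X,f)$ of near-maximal cardinality through $\pi$ and noting each point is covered by at most $e^{o(n)}$ cylinders, so the lift is $(n,\delta')$-spanning in $\Sigma$ up to a subexponential factor. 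Either route reduces the statement to Lemma \ref{L:h-exp}, which has already been established.
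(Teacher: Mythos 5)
Your proposal is correct and follows essentially the same route as the paper: the paper also decomposes $h_\mu(\sigma)$ via the relative (fiber) entropy over the factor map $\pi$ (citing Walters), bounds the conditional entropy over a fiber by $\log S_{n,x}$, and concludes from Lemma \ref{L:h-exp} that the fiber entropy vanishes, the easy inequality $h_\nu(f)\le h_\mu(\sigma)$ being the standard factor-map fact. The bookkeeping point you flag (that only cylinders meeting $\pi^{-1}(x)$ carry conditional mass, since the disintegration $\mu_x$ is supported on the fiber) is exactly how the paper's one-line estimate works, so no genuinely different machinery is needed.
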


\begin{proof}
Let $\mathcal{A}^n$ denote the partition of $\Sigma$ into cylinders of depth $n$, let $\theta$ be the partition in preimages of points under $\pi$,
and for $\nu$-a.e. $x \in X$ let $\mu_x$ be the conditional measure on the fiber over $x$.
Let us consider the \emph{relative entropy} $h_\mu(\sigma \vert f)$ (see e.g. \cite{Wa}), which we recall
satisfies
$$h_\mu(\sigma) = h_\nu(f) + h_\mu(\sigma \vert f).$$
By definition of relative entropy, 
\begin{align*}
h_\mu(\sigma \vert f) & = \lim_{n \to \infty} \frac{1}{n} H_\mu( \mathcal{A}^n \vert \theta) \\
& = \lim_{n \to \infty} \frac{1}{n} \int_X d\nu(x) \sum_{a \in \mathcal{A}^n} - \mu_x(a) \log \mu_x(a)
\end{align*}
and, by the comparison between measure-theoretic and topological entropy, 
\begin{align*}
& \leq\limsup_{n \to \infty} \frac{1}{n} \int_X \log S_{n, x} \ d \nu(x) = 0,
\end{align*}
where the last claim follows from Lemma \ref{L:h-exp}.
\end{proof}

\section{Existence and uniqueness of equilibrium states}

We start by showing that one can ``lift" invariant measures.

\begin{lemma} \label{L:measure-lift}
Let $\sigma : \Sigma \to \Sigma$ and $\tau : X \to X$ be continuous maps, and let $\pi : \Sigma \to X$ be a continuous semiconjugacy,
i.e. so that $\pi \circ \sigma = \tau \circ \pi$. Then for any $\tau$-invariant probability measure $\mu$ on $X$,
there exists a probability measure $\widetilde{\mu}$ on $\Sigma$ which is $\sigma$-invariant
and such that $\pi_\star \widetilde{\mu} = \mu$.
\end{lemma}

\begin{proof}
We will use Riesz' extension theorem. Namely, the measure $\mu$ defines a positive functional on $C_0(\Sigma) := \{ f \in C(\Sigma) \ : \ \exists g \in C(X) \textup{ with }f = g \circ \pi \}$
by setting
$$\mu( g \circ \pi ) := \int g \ d \mu.$$
The functional is well-defined since $\pi$ is surjective. Now, we need to check that
$$C(\Sigma) = C_0(\Sigma) + P$$
where $P$ is the convex cone of non-negative functions. This is obvious since given $f \in C(\Sigma)$ we have
$$f = \inf f + (f - \inf f)$$
where $\inf f$ is constant, hence belongs to $C_0(\Sigma)$, and $f - \inf f \geq 0$.
Then by the Riesz' extension theorem (\cite[Theorem 5.5.8]{Simon}) there exists a positive functional on $C(\Sigma)$ which extends $\mu$.
By Riesz' representation theorem, this functional is represented by a measure $\nu$ on $\Sigma$ which has the property that
$$\int  f \circ \pi \ d \nu = \int f \ d \mu$$
for any $f \in C(X)$. Now, by taking a limit point of the sequence
$$\mu_n := \frac{1}{n} \sum_{j = 0}^{n-1} \sigma^j_\star \nu$$
we obtain a measure $\widetilde{\mu}$ on $\Sigma$ which is $\sigma$-invariant and satisfies $\pi_\star \widetilde{\mu} = \mu$.
\end{proof}

Then, we show that the pushforward of an equilibrium state on $\Sigma$ is an equilibrium state on $X$.

\begin{lemma} \label{L:pressure}
Let $\varphi : (X, \rho) \to \mathbb{R}$ be a H\"older continuous potential, and let $\widetilde{\mu}$ be an equilibrium state for $\widetilde{\varphi} = \varphi \circ \pi$. Then $\mu := \pi_\star \widetilde{\mu}$ is an equilibrium state for $\varphi$.
\end{lemma}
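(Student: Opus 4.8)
The plan is to compare the topological pressures of $\varphi$ on $X$ and of $\widetilde\varphi=\varphi\circ\pi$ on $\Sigma$, using the no-entropy-drop Lemma~\ref{L:no-drop}. First I would record the two elementary identities that apply to the given equilibrium state $\widetilde\mu$: by the change-of-variables formula $\int_\Sigma\widetilde\varphi\,d\widetilde\mu=\int_\Sigma(\varphi\circ\pi)\,d\widetilde\mu=\int_X\varphi\,d\mu$, and by Lemma~\ref{L:no-drop} (which applies since there are no periodic critical points) $h_{\widetilde\mu}(\sigma)=h_\mu(f)$. Since $\widetilde\varphi$ is continuous (indeed H\"older, as $\pi$ is H\"older and $\varphi$ is H\"older) and $\widetilde\mu$ is an equilibrium state for it, adding the two identities gives
$$P_{top}(\widetilde\varphi)=h_{\widetilde\mu}(\sigma)+\int_\Sigma\widetilde\varphi\,d\widetilde\mu=h_\mu(f)+\int_X\varphi\,d\mu.$$
By the variational definition of pressure the right-hand side is $\le P_{top}(\varphi)$, so it suffices to prove the reverse inequality $P_{top}(\varphi)\le P_{top}(\widetilde\varphi)$; together with the display above this forces $P_{top}(\varphi)=h_\mu(f)+\int_X\varphi\,d\mu$, i.e.\ $\mu$ is an equilibrium state for $\varphi$.

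The heart of the argument is therefore the inequality $P_{top}(\varphi)\le P_{top}(\widetilde\varphi)$, which I would prove by lifting invariant measures through the semiconjugacy. Given any $\nu\in M(f)$, since $\pi$ is a continuous surjection between compact metric spaces (Proposition~\ref{P:semiconjugacy}) the induced map $\pi_\star$ on probability measures is surjective, so there is a Borel probability measure $m$ on $\Sigma$ with $\pi_\star m=\nu$. The Ces\`aro averages $\frac1n\sum_{k=0}^{n-1}\sigma^k_\star m$ have a weak-$*$ limit point $\widehat\nu$, which is $\sigma$-invariant; and since $\pi\circ\sigma^k=f^k\circ\pi$ yields $\pi_\star\sigma^k_\star m=f^k_\star\nu=\nu$ for every $k$ while $\pi_\star$ is weak-$*$ continuous, we get $\pi_\star\widehat\nu=\nu$. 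Because $\pi$ realizes $(X,f,\nu)$ as a factor of $(\Sigma,\sigma,\widehat\nu)$, entropy does not increase under passing to a factor, so $h_\nu(f)\le h_{\widehat\nu}(\sigma)$ \cite{Wa}; hence
$$h_\nu(f)+\int_X\varphi\,d\nu\le h_{\widehat\nu}(\sigma)+\int_\Sigma\widetilde\varphi\,d\widehat\nu\le P_{top}(\widetilde\varphi),$$
where the middle term again uses $\int_\Sigma\widetilde\varphi\,d\widehat\nu=\int_X\varphi\,d\nu$. Taking the supremum over $\nu\in M(f)$ gives $P_{top}(\varphi)\le P_{top}(\widetilde\varphi)$, completing the proof.

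The only point that requires care is the existence of the invariant lift $\widehat\nu$: one must know that the continuous surjection $\pi$ induces a surjection on the spaces of probability measures (so that the initial $m$ exists), after which the Krylov--Bogolyubov averaging is routine. The surjectivity of $\pi_\star$ follows either from a Borel section of $\pi$, or from the observation that the image of $\pi_\star$ is a weak-$*$-closed convex subset of the probability measures on $X$ containing every Dirac mass $\delta_x=\pi_\star\delta_y$ (for any $y\in\pi^{-1}(x)$), hence is everything. I would emphasize that for this lifting step only the trivial ``entropy does not increase'' direction for factor maps is used, which holds unconditionally; the nontrivial ``no drop'' statement of Lemma~\ref{L:no-drop}, which relies on the absence of periodic critical points, enters only through the single measure $\widetilde\mu$.
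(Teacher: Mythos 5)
Your proof is correct and follows essentially the same route as the paper: the chain $P_{top}(\varphi)\le P_{top}(\varphi\circ\pi)=h_{\widetilde\mu}(\sigma)+\int\widetilde\varphi\,d\widetilde\mu=h_\mu(f)+\int\varphi\,d\mu\le P_{top}(\varphi)$, using Lemma~\ref{L:no-drop} and the variational principle. The only difference is that the paper invokes the inequality $P_{top}(\varphi)\le P_{top}(\varphi\circ\pi)$ as a standard consequence of $\pi$ being a (surjective, continuous) semiconjugacy, whereas you supply its proof by lifting invariant measures via Krylov--Bogolyubov and the non-increase of entropy under factors; this is a correct and self-contained way to justify the same step.
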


\begin{proof}
Since $\pi$ is a semiconjugacy,
\begin{align*}
P_{top}(\varphi) & \leq P_{top}(\varphi \circ \pi)
\intertext{and since $\widetilde{\mu}$ is an equilibrium state}
& = h_{\widetilde{\mu}}(\sigma) + \int \widetilde{\varphi} \ d \widetilde{\mu}
\intertext{and using Lemma \ref{L:no-drop} (no entropy drop)}
& = h_\mu(f) + \int \varphi \ d \mu \leq P_{top}(\varphi)
\intertext{where in the last step we used the variational principle. Hence }
P_{top}(\varphi) & = P_{top}(\varphi \circ \pi)
\end{align*}
and $\mu$ is an equilibrium state for $\varphi$.
\end{proof}




\begin{lemma} \label{L:lift-measure}
Let $\mu$ be an equilibrium state for $\varphi$. Then there exists a measure $\widetilde{\mu}$ which is an equilibrium state for $\widetilde{\varphi}$
and $\pi_\star \widetilde{\mu} = \mu$.
\end{lemma}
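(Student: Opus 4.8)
The plan is to combine the two preceding lemmas. By the second of the two preceding lemmas, since $\mu$ is an equilibrium state for $\varphi$ (hence in particular $f$-invariant), there exists a $\sigma$-invariant probability measure $\widetilde{\mu}$ on $\Sigma$ with $\pi_\star \widetilde{\mu} = \mu$. It remains to verify that some such lift is actually an equilibrium state for $\widetilde{\varphi} = \varphi \circ \pi$. The key computation runs as follows: using that $\widetilde{\mu}$ projects to $\mu$ we have $\int_\Sigma \widetilde{\varphi}\, d\widetilde{\mu} = \int_X \varphi\, d\mu$, and by Lemma \ref{L:no-drop} we have $h_{\widetilde{\mu}}(\sigma) = h_\mu(f)$, so
$$h_{\widetilde{\mu}}(\sigma) + \int_\Sigma \widetilde{\varphi}\, d\widetilde{\mu} = h_\mu(f) + \int_X \varphi\, d\mu = P_{top}(\varphi) = P_{top}(\widetilde{\varphi}),$$
where the last two equalities are exactly the content of the first lemma of this section (that $\mu$ is an equilibrium state for $\varphi$, together with $P_{top}(\varphi) = P_{top}(\varphi\circ\pi)$). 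This shows $\widetilde{\mu}$ realizes the supremum defining $P_{top}(\widetilde{\varphi})$, i.e.\ it is an equilibrium state for $\widetilde{\varphi}$, and by construction $\pi_\star \widetilde{\mu} = \mu$.

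The only subtlety — and the step I expect to require the most care — is making sure the lift produced by the Riesz-extension argument is the one we use, rather than needing a separate argument: the measure $\widetilde{\mu}$ obtained there is $\sigma$-invariant and satisfies $\pi_\star\widetilde{\mu} = \mu$, and Lemma \ref{L:no-drop} applies to \emph{any} $\sigma$-invariant measure, so no extra work is needed to pin down the entropy. Thus the proof is essentially a one-line concatenation: invoke the previous lemma to get a $\sigma$-invariant lift, then apply Lemma \ref{L:no-drop} and the identity $P_{top}(\varphi) = P_{top}(\varphi\circ\pi)$ to conclude it is an equilibrium state. One should note in passing that $h_{\widetilde\mu}(\sigma)$ and $\int \widetilde\varphi\,d\widetilde\mu$ are both finite (the entropy because $\Sigma$ is a shift on finitely many symbols, the integral because $\varphi$ is continuous on the compact space $X$), so the sum is well-defined and the comparison with $P_{top}(\widetilde\varphi)$ is legitimate.
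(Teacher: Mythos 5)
Your proposal is correct and follows essentially the same route as the paper: take the $\sigma$-invariant lift $\widetilde{\mu}$ from the preceding (Riesz-extension) lemma, use the pressure identity $P_{top}(\varphi)=P_{top}(\varphi\circ\pi)$ established in the first lemma of the section, and compare $h_{\widetilde{\mu}}(\sigma)+\int\widetilde{\varphi}\,d\widetilde{\mu}$ with the pressure to conclude via the variational principle. The only (harmless) difference is that you invoke Lemma \ref{L:no-drop} to get the equality $h_{\widetilde{\mu}}(\sigma)=h_\mu(f)$, whereas the paper only needs the soft general inequality that entropy does not decrease when passing to an extension.
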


\begin{proof}
By the proof of Lemma \ref{L:pressure},
\begin{align*}
P_{top}(\varphi \circ \pi) & = P_{top}(\varphi)
\intertext{and, since $\mu$ is an equilibrium state, }
& = h_\mu(f) + \int \varphi  \ d\mu
\intertext{and, using that entropy increases by taking an extension and that $\pi_\star \widetilde{\mu} = \mu$,}
& \leq h_{\widetilde{\mu}}(\sigma) + \int \widetilde{\varphi}  \ d\widetilde{\mu}
\end{align*}
hence by the variational principle $\widetilde{\mu}$ is an equilibrium state.
\end{proof}

\begin{lemma}
For any H\"older continuous potential $\varphi : X \to \mathbb{R}$ there is a unique equilibrium state.
\end{lemma}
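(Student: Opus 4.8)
The plan is to reduce everything to the classical thermodynamic formalism on the full shift. First I would observe that, since $\pi : \Sigma \to X$ is H\"older continuous (Proposition~\ref{P:semiconjugacy}) and $\varphi$ is H\"older on $(X,\rho)$, the lifted potential $\widetilde\varphi := \varphi \circ \pi$ is H\"older continuous on $\Sigma$ with respect to the standard metric on the symbol space. The shift $\sigma : \Sigma \to \Sigma$ on $d \geq 2$ symbols is a topologically mixing subshift of finite type (indeed the full shift), so the classical Ruelle--Perron--Frobenius theory (see \cite{bowen}, \cite{Ruelle}, or \cite{PU}) yields a \emph{unique} equilibrium state $\widetilde\mu$ for $\widetilde\varphi$ on $\Sigma$.

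Existence on $X$ is then immediate: by the first lemma of this section, $\mu := \pi_\star \widetilde\mu$ is an equilibrium state for $\varphi$ on $X$.

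For uniqueness, suppose $\mu'$ is any equilibrium state for $\varphi$ on $X$. By Lemma~\ref{L:lift-measure} there is an equilibrium state $\widetilde\mu'$ for $\widetilde\varphi$ on $\Sigma$ with $\pi_\star \widetilde\mu' = \mu'$. By uniqueness on $\Sigma$ we have $\widetilde\mu' = \widetilde\mu$, and hence $\mu' = \pi_\star \widetilde\mu' = \pi_\star \widetilde\mu = \mu$. Thus the equilibrium state for $\varphi$ is unique, and one may denote it $\mu_\varphi$.

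As for obstacles: the substance of the argument has already been carried out in the preceding lemmas --- in particular the ``no entropy drop'' Lemma~\ref{L:no-drop}, which is the genuinely new ingredient and the one place where the absence of periodic critical points enters (via Lemmas~\ref{L:bounded-singular} and~\ref{L:h-exp}). Granting those, the only point requiring mild care is that Lemma~\ref{L:lift-measure} guarantees the \emph{existence} of an equilibrium lift but not its uniqueness; this is harmless, since every equilibrium lift of an equilibrium state on $X$ must coincide with $\widetilde\mu$ by uniqueness on $\Sigma$, so the pushforward $\pi_\star\widetilde\mu$ is forced regardless of which lift one selects.
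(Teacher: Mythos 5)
Your proof is correct and follows essentially the same route as the paper: lift any equilibrium state on $X$ to an equilibrium state for $\varphi\circ\pi$ on $\Sigma$ via Lemma~\ref{L:lift-measure}, invoke uniqueness on the full shift for the H\"older potential $\varphi\circ\pi$, and push forward; existence comes from the earlier lemma resting on the no-entropy-drop result. Your closing remark that uniqueness of the lift is not needed (only that every lift is forced to equal the shift equilibrium state) is exactly the point the paper's argument uses implicitly.
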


\begin{proof}
Let $\mu_1, \mu_2$ be two equilibrium states for $\varphi$ on $X$. Then the measures $\widetilde{\mu_i}$ for $i = 1, 2$
produced in the previous Lemma are equilibrium states for $\varphi \circ \pi$ on $\Sigma$, and we know that this is unique.
Hence $\mu_1 = \pi_\star \widetilde{\mu_1} = \pi_\star \widetilde{\mu_2} = \mu_2$.
\end{proof}

\begin{proof}[Proof of Theorem \ref{T:main} (1)-(5)]
Consider a H\"older continuous potential $\varphi : X \to \mathbb{R}$ and a H\"older continuous observable $\psi : X \to \mathbb{R}$, and let $\pi: \Sigma \to X$ be the semiconjugacy of Proposition \ref{P:semiconjugacy}. Then $\varphi \circ \pi$ and $\psi \circ \pi$ are H\"older continuous with respect to the metric on $\Sigma$.
Now, by the previous discussion there is a unique equilibrium state $\mu_\varphi$ on $X$ for the potential $\varphi$ and a unique equilibrium state $\nu_{\varphi \circ \pi}$ for the potential $\varphi \circ \pi$. Moreover, it is well-known that equilibrium states for H\"older potentials on the full shift space on $d$ symbols
satisfy the statistical laws (CLT, LIL, EDC, LD); for CLT and LIL, see \cite[Theorem 5.7.1]{PU}; for EDC, see \cite[Theorem 5.4.9]{PU}, and for LD see for example \cite{DK} and references therein. Hence the sequence $(\psi \circ \pi \circ \sigma^n)_{n \in \mathbb{N}}$ satisfies the statistical laws with respect to $\nu_{\varphi \circ \pi}$.
Since $\psi \circ \pi \circ \sigma^n = \psi \circ f^n \circ \pi$ and $\nu_{\varphi \circ \pi}$ pushes forward to $\mu_\varphi$, the sequence $(\psi \circ f^n)_{n \in \mathbb{N}}$ also satisfies CLT, LIL, EDC and LD with respect to $\mu_{\varphi}$.
\end{proof}

\section{Periodic critical points} \label{S:periodic-crit}

Let us now focus on the case where $f : W_1 \to W_0 \subseteq S^2$ is defined on an open subset of the $2$-sphere, but periodic critical points are allowed.
In this case, we can blow up the sphere along the preimages of the critical orbits and obtain a coarse
expanding map of a Sierpi\'nski carpet (or a subset thereof) without periodic critical points.
Our construction turns out to be the inverse of the construction used in \cite{GHMZ}.

The main result of this section is the following proposition, whose proof we will give in several steps.

\begin{proposition} \label{P:blowup}
Let $f : W_1 \to W_0 \subseteq S^2$ be a weakly coarse expanding map on an open subset of the $2$-sphere, and let $\rho$ be an exponentially contracting metric on $X$.
Then there exist a strongly path connected space $\widetilde{W}_0$ and a weakly coarse expanding
system $g :  \widetilde{W}_1 \to \widetilde{W}_0$ without periodic critical points, with repellor $Y$, and a metric $\rho'$ on $Y$ which is
exponentially contracting with respect to $g$, and there is a continuous map $\pi : \widetilde{W}_0 \to W_0$ such that $\pi \circ g = f \circ \pi$.
\end{proposition}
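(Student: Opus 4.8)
The plan is to construct the blown-up space $\widetilde{W}_0$ concretely as a decomposition space, replacing each point in the grand orbit of the periodic critical points by a circle, and then transport the dynamics. First I would identify the relevant set: let $\mathrm{Crit}_{per}$ be the (finite) set of periodic critical points of $f$ in $X$, and let $E := \bigcup_{n \geq 0} f^{-n}(\{f^m(c) : c \in \mathrm{Crit}_{per}, m \geq 0\})$ be its full grand orbit. Since $B_f$ is finite and each periodic critical point has a finite cycle, the forward orbit of $\mathrm{Crit}_{per}$ is finite; however, $E$ is countable and may be dense. For each $e \in E$, I would choose a small closed topological disk $D_e$ around $e$ (shrinking fast enough along backward orbits using exponential contraction of $\rho$, so that the disks are pairwise disjoint and accumulate only on $X$-points), and perform the classical ``blow-up'' surgery: remove the open disk $\mathrm{int}(D_e)$ and glue in its place using the boundary circle, or, more cleanly, take the quotient of $S^2$ minus the $E$-points by the relation that collapses nothing but rather adjoins a boundary circle at each puncture (the ``prime end'' or ``real oriented blow-up'' construction). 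Concretely: let $\widetilde{W}_0$ be $W_0 \setminus E$ compactified at each $e \in E$ by a circle $\mathbb{S}_e$ of ``directions at $e$''; the natural map $\pi : \widetilde{W}_0 \to W_0$ collapses each $\mathbb{S}_e$ to $e$ and is a homeomorphism elsewhere. One checks this space is (homeomorphic to) a subset of a Sierpi\'nski carpet and is strongly path connected, since removing a countable set from a carpet leaves a path connected set (this uses that the carpet has no local cut points).

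Next I would lift the dynamics. Away from $E$, $f$ is already well-defined and unbranched near $E$'s preimages are handled by the branched-cover local model $z \mapsto z^k$; the key observation is that blowing up both a critical point $e$ with $f(e) = e'$ and blowing up $e'$ turns the local model $z \mapsto z^k$ into the annulus map $(\theta, r) \mapsto (k\theta, r)$ near the adjoined circles, which is a genuine unbranched covering of degree $k$ on the circle level — this is precisely why periodic critical points disappear after the surgery. So $g : \widetilde{W}_1 \to \widetilde{W}_0$ is defined by $g = \pi^{-1} \circ f \circ \pi$ on the complement of the circles and extended continuously (using the angular derivative / induced map on directions) across each $\mathbb{S}_e$; since $E$ is forward and backward invariant under $f$, the circles are permuted compatibly and $g$ is a well-defined finite branched cover with $\pi \circ g = f \circ \pi$, whose branch set $B_g$ is the (still finite) image of $B_f \setminus E$, now with no periodic critical points because the former periodic critical cycles have been replaced by invariant circles on which $g$ acts as an expanding circle map. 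I would then verify the [Expansion] axiom and local-eventual-ontoness for $g$: these descend from $f$ via $\pi$ together with the fact that the adjoined circles shrink along with the disks $D_e$ (expansion on each invariant circle is automatic from $z \mapsto z^k$, $k \geq 2$).

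Finally I would produce the metric $\rho'$ on the repellor $Y = \bigcap_n g^{-n}(\widetilde{W}_1)$. Rather than pulling back $\rho$ (which degenerates on the circles), I would apply Proposition~\ref{P:Frink-exp} (equivalently Theorem~\ref{T:exp-contr}): since $g$ satisfies [Expansion], there is an exponentially contracting metric $\rho'$ on $Y$ compatible with its topology, and this is all that is required for the statement. (The explicit construction of such a metric — blending the pullback of $\rho$ with an angular metric on the circles, scaled by $\theta^{n}$ at depth-$n$ preimages — is deferred to Appendix A.) With $\widetilde{W}_0$, $g$, $\rho'$ and $\pi$ in hand, the Proposition follows.

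The main obstacle I anticipate is purely topological bookkeeping in the surgery: one must choose the disks $D_e$ (for $e$ ranging over a countable, possibly dense set) coherently so that (i) they are pairwise disjoint, (ii) $f$ maps the disk at $e$ onto the disk at $f(e)$ as a degree-$\deg(f;e)$ branched cover with no other critical behavior, (iii) the resulting quotient is Hausdorff, metrizable, and actually embeds in $S^2$ as (a subset of) a Sierpi\'nski carpet, and (iv) $\pi$ is continuous and proper. Controlling accumulation — ensuring $\bigcup_e D_e$ has closure meeting $X$ only in $X$, so that the adjoined circles do not pile up pathologically — is exactly where the exponential contraction of $\rho$ is used, and making the gluing map on directions continuous across all the circles simultaneously is the delicate point; the circle-map computation for $z \mapsto z^k$ is routine once the framework is set up. The embedding-into-$S^2$ claim, and the identification with a carpet, I would relegate to Appendix B as indicated.
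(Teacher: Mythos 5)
Your overall strategy coincides with the paper's (blow up the grand orbit of the periodic critical points to circles, transport the dynamics so the circles carry the degree-$d$ covering map, then produce a contracting metric), but the proposal leaves genuine gaps at exactly the two places where the real work lies. First, to define the ``circle of directions'' at a periodic critical point and the induced circle map you invoke ``the branched-cover local model $z \mapsto z^k$'' and ``the angular derivative.'' For a map that is merely a topological finite branched cover (no smoothness, no conformality) there is no intrinsic angular derivative, and what is actually needed is a \emph{dynamical} normal form on a whole invariant neighborhood: a conjugacy of $f$ (or $f^k$) near the cycle to $re^{i\theta}\mapsto \lambda r e^{id\theta}$. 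This is the content of Lemma \ref{L:conj}, whose proof is nontrivial (one must construct a Jordan curve around $p$ disjoint from its own pullback, using [Expansion] to rule out non-escaping boundary intersections, and then build the invariant system of curves $\gamma_n$ and rays $\beta_{n,j}$). Without such a lemma your definition of $g$ on the circles, and the topology near them, is not yet well posed.

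Second, and more seriously, the assertion that [Expansion] for $g$ ``descends from $f$ via $\pi$ together with the fact that the adjoined circles shrink'' is not justified, and your plan to then quote Proposition \ref{P:Frink-exp}/Theorem \ref{T:exp-contr} does not go through as stated. Smallness of $\pi$-projections does not control sets in the blown-up topology: a component of $g^{-n}(U)$ whose projection is a tiny set near $p$ can a priori have angular oscillation $2\pi$ around the circle $S_p$, so the pullback covers need not refine a given cover of $Y$ near the circles; ruling this out is precisely the delicate part of the paper's argument (the choice of the two arc-neighborhoods $V_0,V_1$ covering $S_p$, the normalizations \eqref{eq:1}--\eqref{eq:2}, and the case analysis 2A/2B controlling the oscillation $\Delta$ of pullbacks). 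Moreover, Proposition \ref{P:Frink-exp} presupposes a metric on the repellor in which the meshes of $\mathcal{U}_n$ already tend to zero, and on $Y$ the only naturally available candidate, the pullback $\pi^*\rho$, is a degenerate pseudometric on the circles (as you yourself note); Theorem \ref{T:exp-contr} therefore cannot be applied off the shelf, since metrizability of $Y$ compatible with your quotient topology is part of what has to be proved. The paper resolves both issues at once by verifying the hypotheses of Frink's lemma (Lemma \ref{L:Frink-lemma}) directly for the adapted cover $\widetilde{\mathcal W}_0$, which simultaneously yields metrizability and exponential contraction; alternatively one can build the metric by hand as in Appendix A. Your proposal defers exactly this step, so as written it assumes the hardest part of the statement rather than proving it. (Smaller items you flag but do not settle — Hausdorffness when a non-periodic critical point maps onto the cycle, which forces extra basic open sets, and compactness of the blown-up space — also require the explicit arguments given in the paper.)
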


Let us start the proof of Proposition \ref{P:blowup} by obtaining a local model in a neighborhood of a fixed critical point.

\begin{lemma} \label{L:conj}
Let $f : W_1 \to W_0 \subseteq S^2$ be a weakly coarse expanding map, and let $p \in X$ be a fixed critical point. Then
for any $\lambda > 1$ there exist $d \in \mathbb{Z} \setminus \{0 \}$, a neighborhood $U$ of $p$ and a homeomorphism $h : \overline{\mathbb{D}} \to \overline{U}$
such that $f \circ h = h \circ g$ where $g : \overline{\mathbb{D}}_{\lambda^{-1}} \to \overline{\mathbb{D}}$ is defined as
$$g(r e^{i \theta}) := \lambda r e^{i d \theta}$$
for any $r \leq 1$, $\theta \in \mathbb{R}$.
\end{lemma}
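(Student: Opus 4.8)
The plan is to produce a normal form for $f$ near a fixed critical point by exploiting three facts: (i) $f$ is a finite branched cover, so near $p$ it looks topologically like $z \mapsto z^k$ for some local degree $k = \deg(f;p) \geq 2$; (ii) the [Expansion] axiom forces a definite amount of metric contraction of backward iterates around $p$ (via Theorem~\ref{T:exp-contr}); and (iii) $f(p) = p$, so we are genuinely looking at a local dynamical system, not just a local map.

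\medskip

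\emph{Step 1: topological normal form for the branching.} First I would invoke the theory of finite branched covers of surfaces to find a neighborhood $U_0$ of $p$ and a homeomorphism $\phi : U_0 \to \mathbb{D}$ with $\phi(p) = 0$ conjugating $f|_{U_0}$ (on a possibly smaller neighborhood where it lands in $U_0$) to a map of the form $z \mapsto \psi(z^k)$ where $k = \deg(f;p)$ is the local degree and $\psi$ is some homeomorphism onto its image fixing $0$; equivalently, after a further homeomorphic change of coordinates on the target, one can arrange $f$ to look like $r e^{i\theta} \mapsto F(r) e^{i k \theta}$ for a suitable increasing homeomorphism $F$ of $[0,1)$. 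The sign of $d$ in the statement accommodates the orientation-reversing case, so $d = \pm k$.

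\medskip

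\emph{Step 2: linearizing the radial part.} The remaining content is to replace $F(r)$ by the linear map $r \mapsto \lambda r$. Here is where expansion enters: because the cover $\mathcal{U}_n$ around $p$ has diameters going to $0$ uniformly (Theorem~\ref{T:exp-contr}), the branches of $f^{-n}$ at the fixed point $p$ contract, so $F^{-1}$ is a contraction near $0$ in a coarse sense, i.e. $F(r) > r$ for small $r > 0$ and, more quantitatively, the backward orbits $F^{-n}(r) \to 0$ at a controlled rate. Given any prescribed $\lambda > 1$, I would then build a homeomorphism $\beta$ of a half-open interval $[0, 1)$ (fixing $0$) conjugating $r \mapsto F(r)$ to $r \mapsto \lambda r$ on $[0, \lambda^{-1})$: the standard ``fundamental domain'' construction — pick a point $r_0$ in the domain of $F$, declare $\beta$ arbitrarily (but homeomorphically) on the fundamental annulus $[F^{-1}(r_0), r_0]$ matching endpoints so that $\beta(F(r)) = \lambda \beta(r)$, and extend by the conjugacy equation to all of $[0,1)$, with $\beta(0) = 0$ forced by the contraction. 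One must check continuity at $0$, which is exactly where the uniform contraction of $F^{-n}$ (hence of $\lambda^{-n}$ matching it) is used. Combining $\phi$ with the radial reparametrization $r e^{i\theta} \mapsto \beta(r) e^{i\theta}$ gives the desired $h : \overline{\mathbb{D}} \to \overline{U}$ with $f \circ h = h \circ g$.

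\medskip

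\emph{Main obstacle.} The genuinely delicate point is \emph{not} the angular part (which is pure topology of branched covers) but making the radial linearization work \emph{with the same $\lambda$ uniformly} and, crucially, ensuring that the conjugating homeomorphism extends continuously and homeomorphically across the fixed point $r = 0$ — i.e. that the abstract ``funnel'' of $F$ near $0$ is topologically a half-open interval with $0$ as its single boundary point, so that $\beta$ sends $0$ to $0$ and is a homeomorphism there. This is precisely the place where one needs the [Expansion] hypothesis rather than mere continuity of $f$: without some contraction one cannot rule out pathologies (e.g. the local inverse branches failing to shrink, or $p$ being neither attracting nor repelling in a controlled way), and one cannot guarantee the target disk $\overline{U}$ is a genuine closed topological disk with $\overline{f(h(\mathbb{D}_{\lambda^{-1}}))} \subseteq \overline{U}$. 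I would handle this by choosing $U$ to be (the closure of) an appropriate element, or union of elements, of the cover $\mathcal{U}_n$ for large $n$, whose boundary behavior and nesting under $f^{-1}$ are controlled by Theorem~\ref{T:exp-contr}, and then transporting the radial coordinate from the nested sequence $U \supset f^{-1}(U)\text{-component} \supset \cdots$ shrinking to $\{p\}$.
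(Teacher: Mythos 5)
There is a genuine gap, and it sits in your Step 1. The local structure theorem for finite branched covers of surfaces (Sto\"ilow-type) gives only a \emph{left--right} normal form: there are two homeomorphisms, one on the source and one on the target, generally different from each other, in which $f$ becomes $z \mapsto z^k$. What Lemma \ref{L:conj} asks for is a \emph{single} chart $h$, defined on a neighborhood of the fixed point, that conjugates $f$ to the model $g$; upgrading the left--right form to a conjugacy-type normal form is not ``a further homeomorphic change of coordinates on the target'' --- it is essentially the entire content of the lemma. In an arbitrary chart there is no reason for the radial behavior of $f$ to be independent of $\theta$, nor for the angular behavior to be exactly $\theta \mapsto d\theta$; to arrange the skew-product form $r e^{i\theta} \mapsto F(r)e^{id\theta}$ in one coordinate you must construct, dynamically, a system of nested Jordan curves $\gamma_n$ around $p$ with $\gamma_{n+1}$ the component of $f^{-1}(\gamma_n)$ in $U$, disjoint from $\gamma_n$ and separating it from $p$, together with a transversal arc $\beta$ satisfying $f(\beta)\cap U \subseteq \beta$ and its preimage arcs $\beta_{n,j}$, and then define $h$ annulus by annulus by lifting a homeomorphism of the fundamental annulus (this is what the paper does, using the equality of degrees and the lifting criterion for the circles, and Riemann maps plus Carath\'eodory to extend across the fundamental annulus). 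Once that structure exists, your Step 2 (linearizing a one-dimensional radial map by a fundamental-domain construction, with continuity at $0$ from shrinking of the pullbacks) is the easy residual point; but the reduction to it is precisely what is missing, so you have reduced the lemma to a statement that is not easier than the lemma.

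A second, smaller but real gap: even the existence of a Jordan curve around $p$ disjoint from the relevant component of its $f$-preimage is not automatic from Theorem \ref{T:exp-contr}, nor from taking $U$ to be an element of $\mathcal{U}_n$ as in your ``main obstacle'' paragraph: the component of $f^{-1}(U)$ containing $p$ may well have boundary meeting $\partial U$, so the pullbacks need not nest with disjoint boundaries. The paper devotes a separate Claim to this: it takes $V=\bigcup_{n=0}^{N-1}U_n$ with $U_N$ contained in the interior of $U$, fills in the holes to make $V$ and its pullback simply connected, and then iteratively modifies the boundary curve near the set $a=\gamma\cap\gamma_1$, pushing it inward; the procedure terminates because a forward-non-escaping subset of the boundary would contradict the [Expansion] axiom. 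Without this step the nested annular structure, and hence the coordinate $h$, does not get off the ground.
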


\begin{remark}
\textup{Note that $d$ may be negative if $f$ is orientation reversing in a neighborhood of $p$. The absolute value $|d|$ is the local
degree of $f$ at $p$.}
\end{remark}

\begin{proof}
Let $B$ be an open topological disc in $S^2$ containing the branched $f$-fixed point $p$ 
and whose closure does not contain other critical points, small enough so that all connected components of $f^{-1}(B)$ which do not contain $p$ are disjoint from $B$.

\medskip

\textbf{Claim.} There exists a Jordan curve $\gamma$ in $B \setminus \{p\}$ such that a component of
$f^{-1}(\gamma)$ is a Jordan curve disjoint from $\gamma$ and separates $\gamma$ from $p$.

\begin{proof}[Proof of the Claim]


By [Expansion], using Lemma \ref{L:repel}, there exist $N > 0$ and an open set $U$, with $p \in U \subseteq B$, such that for $n\ge N$,
\; the closure of $U_n := \Comp_p f^{-n}(U)$ is contained in $U$.
If we replace $U$ by $U_N$, its pullbacks $U_k, k=0, 1, \dots$ are all in $B$, 
so their closures do not contain critical points except possibly $p$.


Replace finally $U_N$ by a smaller \emph{Jordan domain} $\hat U$ containing $p$, that is an open topological disc with its boundary being a Jordan curve.
Let $V:= \Comp_p \left( \bigcap_{n=0}^{N-1} \hat{U}_n \right)$, where $\Comp_p$ means the component containing $p$.
The pullbacks $\hat{U}_n$ of the Jordan domain $\hat U$ must be Jordan domains, since they do not contain other critical points, see \cite{Why}. 
Since every connected component of the  intersection of finitely many Jordan domains in the plane is a
Jordan domain (see e.g. \cite[Proposition 2.4]{CoKo}), then $V$ is a Jordan domain, that is $\gamma := \partial V$ is a Jordan curve.

Similarly 
$V_1:=\Comp_p \left( \bigcap_{n=0}^{N-1} \hat{U}_{n+1}\right)$ is also a Jordan domain, and 
$V_1$ is a pullback of $V$, since $f^{-1}(\hat {U}_{N-1})$ has only one component in $B$ hence we can change the order of $f^{-1}$ and $\Comp_p$. 
Moreover $V_1 \subseteq V$. Let $\gamma_1 := \partial V_1$, which is also a Jordan curve. It is a pullback of $\gamma$.

Now, if the boundaries of $V$ and $V_1$ are disjoint, the claim is satisfied.
Otherwise, 
we shall correct $V$ so that the closure of $V_1$ is contained in $V$.
To this end, denote $a:=\gamma \cap \gamma_1$.
If $f(a)\subset a$ then $a$ is non-escaping, contradicting the [Expansion] axiom (in particular, Lemma \ref{L:repel}).

Otherwise, we modify $\gamma$ as follows.
Note that $a$ and $f(a)$ are closed sets, $f(a) \subset \gamma$, and $\gamma \setminus a$ is the union
of countably many open arcs, hence $f(a) \setminus a$ is a (non-empty) subset of $\gamma \setminus a$.
Now, we consider a neighborhood of $f(a) \setminus a$ in $\gamma \setminus a$, and modify there a part of $\gamma$ by pushing it slightly into $V$,
so that it is still disjoint from $\gamma_1$; note that we do not move points in the closure of $f(a)\setminus a$ belonging to $a$.
This way, we obtain a new Jordan curve $\gamma^1$ which
satisfies $\gamma^1 \cap f^{-1}(\gamma^1) = a \cap f^{-1}(a)$.

If the latter intersection is not empty, we repeat this process by considering for any $n$ the set
$$a_n := \{ x \in a \ : \ f^{k}(x) \in a \textup{ for all }k = 1, \dots, n \},$$
and we modify in $\gamma^{n-1} \setminus a_{n-1}$
a neighborhood of $f(a_{n-1}) \setminus a_{n-1}$  to obtain a new curve $\gamma^n$ so that
$\gamma^n \cap f^{-1}(\gamma^n) = a_n$.

Now, we claim there exists $N$ such that $a_N = \emptyset$. Indeed, otherwise the set $a_\infty=\{x\in\gamma: f^n(x)\in a \ \forall n \geq 1 \}$
would be non-empty with $f(a_\infty)\subset a_\infty$, contradicting the [Expansion] axiom.

Thus, the inductive procedure stops after $N$ steps, and $\gamma^N$ is disjoint from the preimage $f^{-1}(\gamma^N)$,
completing the proof of the claim.

\end{proof}

Let us now complete the proof of the lemma. By induction, let us define as $\gamma_0$ the curve $\gamma$ given by the previous claim, and as
$\gamma_{n+1} := f^{-1}(\gamma_n) \cap U$, obtaining a sequence of disjoint, nested simple closed curves which contain $p$ in their interior.
By the [Expansion] property, the diameter of $\gamma_n$ converges to $0$. For each $n$, let us define as $R_n$ the annulus bounded by $\gamma_n$ and $\gamma_{n+1}$. Let us pick a point $p_0$ on $\gamma_0$, and
let us choose as $p_1 \in \gamma_1$ as one of the $d$ preimages of $p_0$. Let us pick a continuous arc $\alpha = \alpha_0$ which joins $p_0$
and $p_1$ and is contained in $R_0$. Then for each $n \geq 1$ let us define  as $\alpha_{n+1}$ the component of $f^{-1}(\alpha_n)$ which starts at $p_n$,
and let $p_{n+1}$ be other end of $\alpha_{n+1}$. Let $\beta := \bigcup_{n = 0}^\infty \alpha_n$, which is a continuous (open) arc joining $p_0$ to $p$.
Note that by construction, $f(\beta) \cap U \subseteq \beta$. Now, for each $n$ denote as $\beta_{n, j}$ for $0 \leq j \leq d^n-1$ the connected components
of $f^{-n}(\beta) \cap U$.

Let us now fix $\lambda > 1$, and consider the map $g : \mathbb{D} \to \mathbb{D}$ defined as $g(r e^{i \theta}) := \lambda r e^{i d \theta}$. Let $\gamma_0' = \partial \mathbb{D}$ and for each $n$ denote as $\gamma'_n := g^{-n}(\gamma_0')$. Moreover, let $\beta' := (0, 1] \subseteq \mathbb{D}$, and for each $n$ let $\beta'_{n, j}$ for $0 \leq j \leq d^n - 1$ denote the components of $g^{-n}(\beta')$. Finally, let us define the annuli $R_n' \subseteq \mathbb{D}$ bounded by the curves $\gamma'_n$.

Now let us construct $h$. Define $h$ on $\gamma'_0$ as an arbitrary orientation-preserving homeomorphism
$h : \gamma'_0 \to \gamma_0$, such that $h(1)=p_0$.
Next, lift it to $h:\gamma_1'\to \gamma_1$ and proceed inductively lifting it to $h:\gamma'_n\to\gamma_n$ for any $n\in \mathbb{N}$, so that $f \circ h = h \circ g$ and $h(\lambda^{-n})=p_n$. This is possible because both $g$ and $f$ have the same degree $d\in \Z\setminus\{0\}$ (see \cite[Corollary 2.5.3]{Spanier}).

Then, extend $h$ continuously to a homeomorphism $h:\alpha'\to \alpha$, where $\alpha'=\alpha'_0:=[\lambda^{-1},1]\subset \overline{\D}$. Next, extend $h$ to a homeomorphism between the closed sets $R'_0$ and $R_0$
using as charts the Riemann mappings $\Phi':\D\to R'_0\setminus\alpha'_0$ and
$\Phi:\D\to R_0\setminus\alpha_0$ and their extensions to homeomorphisms (local, because two arcs are glued together to  $\alpha$)
$\hat\Phi'$ and $\hat\Phi$ on $\overline{\D}$, which exist by Carath\'eodory's theorem. Extend
$\hat\Phi^{-1} \circ h \circ \hat\Phi'$ from $\partial \D$ to $h'$ on
$\overline{\D}$ radially and next define the extended $h$ by $\hat\Phi \circ h' \circ (\hat\Phi')^{-1}$.

Finally,  define inductively $h:R'_n\to R_n$ for $n=1,2,...$ as lifts of $h:R'_0\to R_0$, extending continuously the map $h$
already defined on $\partial R'_n$. We complete the construction by setting $h(0) := p$.

Note that by this construction for any $n, j$ we have $h(\beta'_{n,j})=\beta_{n,j}$ (if for each $n$ they are
indexed by $j$'s in the same, say ``geometrical", order).
Therefore, $h$ maps the ``rectangles" bounded by $\gamma'_n, \gamma'_{n+1}, \beta'_{n,j}, \beta'_{n,j+1}$
to the ``rectangles" bounded by $\gamma_n, \gamma_{n+1}, \beta_{n,j}, \beta_{n,j+1}$.
\end{proof}

\begin{proof}[Proof of Proposition \ref{P:blowup}]
By Lemma \ref{L:conj}, we can topologically conjugate the map $f$ on a neighborhood of each periodic critical point to a map which linearly expands radial distances. Then using the linear model we can blow up each point in the periodic critical orbit to a circle, and define the map on each circle as multiplication by the local degree. Moreover, we also need to blow up points in the backward orbit of the critical point
and then we obtain a map $g : \widetilde{W}_1 \to \widetilde{W}_0 \subseteq \widetilde{S}$, where $\widetilde{S}$ is the Sierpi\'nski carpet obtained by replacing any point in the
backward orbit of a critical point by a circle. This space is still strongly path connected, so we can apply the techniques of the previous section.

\medskip
\noindent \textbf{Construction of the blowup.}
In order to discuss the details of this construction, let $\mathcal{C}$ denote the (finite) set of periodic critical points which lie in $X$, and let $\mathcal{E} = \bigcup_{n \geq 0}f^{-n}(\mathcal{C})$.
We can define a space $\widetilde{S}$ which is given by blowing up every point of $\mathcal{E}$ to a circle.
Namely, for each $q \in \mathcal{E}$ let $S_q$ be a copy of $S^1$. The space $\widetilde{S}$ is defined as a \emph{set} as
$$\widetilde{S} := (S^2 \setminus \mathcal{E}) \sqcup \bigsqcup_{q \in \mathcal{E}} S_q.$$
The topology on $\widetilde{S}$ will be defined shortly.
Note there is a natural projection map $\pi : \widetilde{S} \to S^2$ which sends each $S_q$ to $q$. Let $\widetilde{W}_0 := \pi^{-1}(W_0)$,
$\widetilde{W}_1 := \pi^{-1}(W_1)$.

\medskip
\noindent \textbf{Definition of $g$.}
Let us now extend $f$ to a map $g : \widetilde{W}_1 \to \widetilde{W}_0$. 
In order to do so, let us identify all $S_q$ for $q \in \mathcal{E}$ with $\mathbb{R}/\mathbb{Z}$,
and define $g : S_{q} \to S_{f(q)}$ as $g(\theta) := d \theta \mod 1$, where $d$ is the local (signed) degree of $f$ at $q$.
Finally, let us define $g := f$ on $\widetilde{W}_1 \setminus \bigcup_{q \in \mathcal{E}} S_q$.

\medskip
\noindent \textbf{Definition of the topology.}
Now, let us define a topology on $\widetilde{W}_0$ as follows.


Let us choose one element from any periodic orbit in $X$ containing critical points, and let us call $\mathcal{E}^*$ the union of such points. 
Moreover, 
for each $q \in \mathcal{E}$ let 
$m=m(q)$ be the minimal $m \geq 1$ such that $f^m(q) \in \mathcal{E}^*$. 
In particular for $p\in \mathcal{E}^*$ this is the minimal period.

Let now $p \in \mathcal{E}^*$.  Then by Lemma \ref{L:conj}, there exists a neighborhood $U_p$ of $p$ such that the map $f^m : U_p \to f^m(U_p)$ is topologically conjugate to $(r, \theta) \mapsto (\lambda r, d \theta \mod 1)$ for some $\lambda > 1$ and $|d| = \textup{deg}(f^m; p)$ is the local degree of $f^m$ at $p$.

We now fix some small values $r_0, \epsilon > 0$ and define for any $\theta_0 \in [0, 2 \pi)$ the set
$$
V_{p, \theta_0} := \pi^{-1}(h(\{  0 < r< r_0,  \theta_0 - \epsilon < \theta < \theta_0 + \epsilon \})) \cup \{ \theta \in S_p \ : \  \theta_0 - \epsilon < \theta < \theta_0 + \epsilon \}.$$

Suppose now $q \in \mathcal{E} \setminus \mathcal{E}^*$, let $m = m(q)$ and $p = f^{m}(q)$. 
By \cite[Theorem X.5.1]{Why2}, there exist a neighborhood $U_q$ of $q$ and a homeomorphism $h' : \mathbb{D} \to U_q$ such that 
$h^{-1} \circ f^m \circ h'(r e^{i \theta} ) = \lambda r e^{i d \theta}$, where $d$ is the local degree of $f^m$ at $q$ and $\lambda > 0$. Similarly as above, we define for any $\theta_0 \in [0, 2 \pi)$
$$
V_{q, \theta_0} := \pi^{-1}(h'(\{  0 < r< r_0,  \theta_0 - \epsilon < \theta < \theta_0 + \epsilon \})) \cup \{ \theta \in S_q \ : \  \theta_0 - \epsilon < \theta < \theta_0 + \epsilon \}.$$

\smallskip

\begin{remark} \label{R:2}
\textup{
Note that in the above construction the change of coordinates in the domain and range are different; 
thus, we do not claim the map to be topologically conjugate to its local model, as we do in Lemma \ref{L:conj}.
Moreover, here, just to define a basis of our topology at $S_q$ it is sufficient to consider the chart $h'=h_q$ 
for each $q$ separately. In fact, similarly one could define these charts inductively as a \emph{compatible system}, 
in the sense that $f\circ h_q=h_{f(q)}\circ F_q$ with e.g. $F_q(re^{i\theta}):=\lambda^{1/m(q)} r e^{i d \theta}$, where $d = \textup{deg}(f, q)$. 
As a consequence of Lemma \ref{L:conj}, this would also hold for $f^{m(p)}$ and $h_p = h_{f^{m(p)}} = h$ with $p \in \mathcal{E}^*$.
}\end{remark}

\smallskip

We now define the topology on $\widetilde{W}_0$ to be the topology generated by
$$\{ \pi^{-1}(U) \ : \ U \textup{ open in }W_0 \} \cup \{g^{-n}(V_{p, \theta_0}) \ : \  n \geq 0, p \in \mathcal{C}, \theta_0 \in [0, 2 \pi) \}.$$
Note that by the above construction it is immediate that the projection $\pi : \widetilde{W}_0 \to W_0$ is continuous.
Moreover, let us check that $g$ is also continuous with respect to this topology. First, if $\widetilde{U} = \pi^{-1}(U)$
with $U$ open in $W_0$, then $g^{-1}(\widetilde{U}) = \pi^{-1}(f^{-1}(U))$ is open in $\widetilde{W}_0$ since $f$ and $\pi$ are continuous.
If, on the other hand, $\widetilde{U} = g^{-n}(V_{p, \theta_0})$ for some $n, p, \theta_0$, then $g^{-1}(\widetilde{U}) = g^{-n-1}(V_{p, \theta_0})$ is also
open by construction, hence $g$ is continuous.

The topology is Hausdorff since for each $x,y \in \widetilde{W}_0$ such that $\pi(x)\not=\pi(y)$, there exist open disjoint sets $U_1, U_2$ in $W_0$ which separate $\pi(x)$ and $\pi(y)$, as the topology on $W_0$ is Hausdorff; then their preimages, which are open by our definition of topology on $\widetilde{W}_0$, separate $x$ and $y$. 
The only case to be checked is when $x, y$ belong to one $S_q$, but then they are separated by finite intersections of the sets $g^{-n}(V_{g^n(q),\theta_0})$.

Finally, note that $\widetilde{W}_0$ is metrizable, since it is $T_3$ and has a countable basis (\cite[Theorem 4.2.9]{Engelking}).
Moreover it can be homeomorphically embedded in $S^2$, see Appendix B.

\medskip
\noindent \textbf{Compactness.}
We now show that the closure of $\widetilde{W}_1$ is compact.

Given a family $\mathcal W$ of open sets covering $\widetilde S$, we can assume they belong to  a basis of the topology.
Those of the form  $\pi^{-1}(U)$ for $U\in\mathcal U$, the family of all open sets in $S^2$, will be called of type I, those intersecting $S_q$ of type II$_q$.

Given $q \in \mathcal{E}$,  by the compactness of $S_q$ we can choose a finite family $\mathcal W_q$ of sets of the cover of type II$_q$
which cover $S_q$. Their union contains an open neighborhood $V_q$ of $S_q$, of type I (since $\mathcal U$ separates points). Pick an open set $V'_q$ of type I so that $V_q\cup V'_q = \widetilde S$, and replace the cover $\mathcal W$ by a finer one, by intersecting all its elements with $V_q$ and $V'_q$.

Do it for all $q \in \mathcal{E}$ getting a resulting cover $\mathcal W'$. Consider the cover consisting of all the sets of type I in $\mathcal W$ and of the sets arising by replacing each finite family $\mathcal W_q$ by one set $V_q$: this is a cover of $\widetilde S$ by sets of type I, so we can find a finite subcover due to the compactness of $S^2$.

Replace back all $V_q$ in this finite cover  by the  finite family
$\mathcal W_q$. This gives a finite cover of $\widetilde S$ refining the original cover
$\mathcal W$. Compactness is proved.

\medskip
\noindent \textbf{Local connectivity.}  To show local connectivity of $\widetilde{W}_0$, we show that every point $x \in \widetilde{W}_0$ has a basis of (strongly) path connected neighborhoods. As usual, there are two cases.

\smallskip
\textbf{Case 1.}
Suppose $x \notin \pi^{-1}(\mathcal{E})$. Consider an open connected set $U \subseteq W_0 \subseteq S^2$ which contains $\pi(x)$, with the Euclidean topology. Then, since $\mathcal{E}$ is countable and
$U$ is a connected subset of the sphere, the set $U \setminus \mathcal{E}$ is (strongly) path connected.
Now, by construction the projection map $\pi$ is a homeomorphism between the set $\pi^{-1}(U) \setminus \bigcup_{q \in \mathcal{E}} S_q$
and $U \setminus \mathcal{E}$.
Thus, every point of $\widetilde{W}_0$ which is not in $\pi^{-1}(\mathcal{E})$ has a neighborhood basis which is strongly path connected.

\smallskip
\textbf{Case 2.} If $x$ belongs to some $S_p$ for some periodic critical point $p$, then consider $r_0$ as before, and for any $\epsilon > 0, \theta_0 \in [0, 2 \pi)$  the set
$$\mathcal{V} := \pi^{-1}(h(\{ 0 < r < r_0, |\theta - \theta_0| < \epsilon \})) \cup \{ \theta \in S_p \ : \ |\theta - \theta_0| < \epsilon \}.$$
Then, the set $\mathcal{V} \setminus \bigcup_{q \in \mathcal{E}\setminus \{p \}} S_q$ is homeomorphic to
$$\{ (r, \theta) \in \mathbb{R}^2  \ : \ 0 \leq r < r_0, \theta_0 - \epsilon < \theta < \theta_0 + \epsilon \} \setminus E$$
where $E$ is a countable set,  hence it is also strongly path connected.

Finally, if $x \in S_q$ for some $q \in f^{-n}(\mathcal{C})$, then one repeats the previous argument considering the connected components $\mathcal{V}'$
of $g^{-n}(\mathcal{V})$, where $\mathcal{V}$ is defined as above.

\medskip

\noindent \textbf{The repellor $Y$ and the [Irreducibility] axiom.} 
Now, we denote as $Y$ the repellor for $g : \widetilde{W}_1 \to \widetilde{W}_0$.

Note that by construction $Y = \pi^{-1}(X)$, so it contains $S_p$ for any $p \in \mathcal{E}$. 
Suppose for simplicity that $p$ is a fixed point for $f$.
Note now that $S_p$ is contained in the closure of the lift of $X\setminus  \{p\}$. 
To prove this, note that $X$ is an uncountable perfect set, so there is $x \in X$
arbitrarily close to $p$, and consider the set of all its $f^n$-preimages in $U = h(\D)$ (see Lemma \ref{L:conj}).
Since the repellor $X$ is backward invariant, these preimages belong to $X$, hence their lifts (points and circles) 
belong to $Y$ since $Y = \pi^{-1}(X)$.
But due to the degree of $f$ at $p$ being $d \geq 2$, they have arguments in the coordinates $h$ 
being all numbers of the form
$(\theta_n+ 2 \pi j)/d^n$ for a constant $0\le \theta_n<2\pi$ and integers $j:0\le j<d^n$, accumulating  on the whole $S_p$ as $n\to\infty$. 


In particular we obtain the locally eventually onto (leo) property of $g \vert_Y$. 
Indeed, if $W$ is an open neighbourhood  of $z\in S_p$ in $Y$ for $p$ a fixed point, 
then it contains a point $\tilde{x}$ whose projection by $\pi$ is $x\notin S_p$ as above. 
Therefore there exists $W'$ a neighbourhood of $\tilde{x}$ in $Y$ being the lift of an open set
$\pi(W')$ in $X$. By the leo property of $f$ on $X$ there is $n$ such that $f^n(\pi(W')) =X$. 
Hence $g^n(W)\supset g^n(W')=Y$. 
For $q$ in the grand orbit of a periodic critical point $p$ we use $g=f^{m(q)}$ 
and the definition of the basis of the topology at points in $S_q$ as the pullbacks 
of the basis at $S_p$. For $W$ in the basis of the topology at the points $z$ not in any $S_q$ 
we rely on the fact that, by construction, the basis at $z$ is the lift of the basis downstairs, at $\pi(z)$.

\medskip

\noindent 
\textbf{A contracting metric.} 
We claim that $(g, \widetilde{W}_0, \widetilde{W}_1)$ with an appropriate cover of $Y$ is a weakly coarse expanding system 
without periodic critical points.

Let us assume for simplicity (see also Remark \ref{R:4}) that there is only one periodic critical point $p$ in $X$ with $f(p) = p$, and let $d$ be the local degree of $f$ at $p$.

Let $X\subset W_1$ denote our $f$-invariant compact repellor. Let $\rho_1$ be a metric on $X$ compatible with the spherical topology given by the metric $\rho_e^X$ defined as the restriction to $X$ of the spherical metric $\rho_e$ on $S^2$.
Due to compactness both metrics are equivalent, that is for every $\epsilon>0$ there exists $\delta$ such that $\rho_1(x,y)<\delta$ implies $\rho_e^X(x,y)<\epsilon$, and vice versa.

\smallskip

Given a cover $\mathcal{U}$ and a metric $\rho$, we use the notation $\mesh_\rho(\mathcal{U}) := \sup_{U \in \mathcal{U}} \diam_\rho (U)$. 
Let  $\mathcal U_0$ be our cover of a neighborhood of $X$ and $\mathcal U_n$ the pullback covers
of a neighborhood of $X$ for $f^{-n}$. By [Expansion] we have 
$$\mesh_{\rho_1}(\mathcal U_n \cap X)\to 0$$
as $n\to\infty$, hence the same holds for $\rho_e^X$. We measure here the elements of $\mathcal U_n$ intersected with $X$.  Moreover, as $n\to\infty$ 
 \begin{equation} \label{E:shrink}
\mesh_{e}(\mathcal U_n)\to 0
\end{equation}
in the spherical metric $\rho_e$ in $S^2$ without intersecting with $X$, by Definition \ref{D:expansion}.

\smallskip

The Euclidean distance in $\overline{\mathbb{D}}$ being the domain of the chart $h$ in Lemma \ref{L:conj} will be denoted by $\rho_{e'}$. The same symbol will be used for the image of the metric on the range set $U$. The metrics $\rho_{e'}$ and $\rho_e$ are equivalent on $U$ by compactness.

\smallskip

By replacing $\mathcal{U}_0$ with some $\mathcal{U}_n$ (to be our new $\mathcal{U}_0$), assume
\begin{equation} \label{eq:2}
\textup{diam}_{e'}(U_n \cap h(\mathbb{D})) \leq \frac{1}{3} (1 - \lambda^{-2})
\end{equation}
for any $n \geq 0$ and any $U_n \in \mathcal{U}_n$, where $\lambda > 1$ is given by Lemma \ref{L:conj}.

We can assume that $U_0(p)=h(B(0,\lambda^{-N_0}))$, for an arbitrary $N_0$, is the set in $\mathcal U_0$ covering our branching fixed point $p$. This may be realized by first replacing $\mathcal U_0$ by $\mathcal U_n$ for an appropriate $n$ so that $U_n \in \mathcal U_n$, which covers $p$, is small enough; and next, replacing that $U_n$ by its superset $h(B(0,\lambda^{-N_0}))$.

For any connected $Z\subset h(\overline{\mathbb{D}}\setminus \{0\})$ denote by $\Delta(Z)$ the oscillation of the angle $\theta$ on $Z$.
Choose $\hat{r} > 0$ such that if $A \subseteq h(\mathbb{D})$ is a connected set with $A \nsubseteq U_0(p)$ and $\textup{diam}_{e} A < \hat{r}$, then $\Delta(A) < \pi$.
Now, by further replacing $\mathcal{U}_0$ with $\mathcal{U}_n$ (while keeping $U_0(p)$ as part of the cover), we can also assume
\begin{equation} \label{eq:1}
\textup{diam}_{e}(U_n) < \hat{r}
\end{equation}
for any $n \geq 0$ and any $U_n \in \mathcal{U}_n$ which is not a pullback of $U_0(p)$.
Finally, by removing any redundant open sets in $\mathcal{U}_0$, we may further assume that there are no other elements of $\mathcal{U}_0$
which are contained in $U_0(p)$.

\medskip

\noindent

\textbf{The cover and Frink's lemma.} 
 Denote $\lambda^{-N_0}$ by $r_\star$.
Let $\widetilde{\mathcal{U}}_0$ be the lift of $\mathcal U_0$ to a neighborhood of $Y$
after blowing up  a branching fixed point $p$ for $f$ and its grand orbit, to circles.
We add to $\widetilde{\mathcal{U}}_0$ two neighborhoods of arcs in the circle $S_p=\R/2\pi \Z$,
$V_0$ and $V_1$ with $0\le r < r_\star$ and $-\pi/4 < \theta<\pi+\pi/4$
and $\pi-\pi/4 < \theta < 2\pi +\pi/4$ respectively, in the polar coordinates of Lemma \ref{L:conj}. 
We replace by them the lift of the set $ U_0(p)\in \mathcal U_0$.
Denote this cover by $\widetilde{\mathcal W}_0$, and for each $n$ by $\widetilde{\mathcal{W}}_n$ the cover given by connected components
of the sets $g^{-n}(U)$ for any $U \in \widetilde{\mathcal{W}}_0$.

\medskip

\begin{remark} \label{R:3}
\textup{Notice that $\bigcup_n \widetilde{\mathcal{W}}_n$ provides a basis for the topology at all points of $Y$.
Indeed, consider any open $V\ni x$ where $x\in Y$.
If $\pi(x)\notin \mathcal{E}$ then by definition $V$ contains $V'=\pi^{-1}(U)$ where $U$ is open in $W_0$. 
By \textup[Expansion] for $f$, $U$ contains $U'\in\mathcal{U}_n$ for some $n$ (see Lemma \ref{L:basis}), hence $\pi^{-1}(U')\subset V$ and belongs to $\widetilde{\mathcal{U}}_n$.
Notice that we can replace the existing $n$ above by all $n\ge N$ for some $N=N_V$.
Analogously for $x\in S_q$ we easily choose $U \subseteq g^{-n}(V_i)$ with $x \in U \subseteq V$. \qed}
\end{remark}


\medskip
Given a set $A \subseteq \widetilde{W}_0$, let us denote
$A^{(2)} := (A \cap Y) \times (A \cap Y)$, and if $\mathcal{A}$ is a collection of sets, we denote 
$\mathcal{A}^{(2)} := \bigcup_{A \in \mathcal{A}} A^{(2)}$.
Moreover, if $\mathcal{A}, \mathcal{B}$ are collections of subsets of $\widetilde{W}_0$, let us denote
their 	``composition" as
$$\mathcal{A}^{(2)} \circ \mathcal{B}^{(2)} 
 := \{ (x,y) \in Y \times Y \ : \ \exists z \in Y \textup{ s.t. }(x,z) \in \mathcal{A}^{(2)}, (z,y) \in \mathcal{B}^{(2)} \}.$$

\begin{lemma} \label{L:Frink-Y}
For the cover $\widetilde{\mathcal W}_0$ there exists $N$ such that the following holds: 
\begin{itemize}
\item[(a)] There exists $N$ such that for any $n \geq 0$
$$
\widetilde{\mathcal W}^{(2)}_{n+N}  \circ \widetilde{\mathcal W}^{(2)}_{n+N}  \circ \widetilde{\mathcal W}^{(2)}_{n+N}  \subseteq \widetilde{\mathcal W}^{(2)}_{n}.
$$
\item[(b)] If $\mathbf{\Delta} = \{ (x, x) \in Y \times Y\}$ is the diagonal, then we have the intersection
$$\bigcap_{n = 0}^\infty \widetilde{\mathcal W}_{n N}^{(2)}=  \mathbf{\Delta}.$$
\end{itemize}
As a consequence, for the sequence $\Omega_n:= \widetilde{\mathcal{W}}^{(2)}_{nN}$ for $n \geq 1$, with 
$\Omega_0:= Y \times Y$, 
the hypotheses of Frink's lemma are satisfied.
\end{lemma}



\begin{proof}
(a) We start by proving the claim for $n = 0$; the general case will follow by taking preimages. Consider $W(i)\in \widetilde{\mathcal W}_N$, $i=1,2,3$ such that $W(i)$ intersects $W(i+1)$ in $Y$, for $i=1,2$.
We prove that if $N$ is large enough then there is $W_0\in \widetilde{\mathcal W}_0$ containing all three $W(i)$.

Let $\kappa_e$ be the Lebesgue number for $\mathcal U_0 \cap X$ and the metric $\rho_e^X$.
Choose $N$ so that $\diam_e \pi(W(i))\le\frac13 \kappa_e$ for $i=1,2,3$.
Additionally we assume that
$N$ is so large that $\diam_{e}\pi(W(i))< r'$ and
$\diam_{e'}\pi(W(i)\cap U_0(p)) < r'$ for a constant $r' \ll r_\star$.

\medskip

Consider $W :=\bigcup_{i=1,2,3} \pi(W(i))$. By definition of $\kappa_e$, the set $\pi(W) \cap X$ is
contained in some element of $\mathcal{U}_0$.

If $W$ lies in one $U_0\in \mathcal U_0$ different from $U_0(p)$, then we are done when we lift these objects to a neighborhood of $Y$.

So suppose $W$ lies in
$U_0(p)$. Fix $r' \ll  r_1 \ll r_\star$.
We consider two cases.

\medskip
1. $W \cap X$ is not contained in $h(B(0,r_1))$. Then since its diameter in $\rho_{e'}$ is at most $r'$, all $W(i)$ are contained either in $V_0$ or in $V_1$.

\medskip

2. $W\cap X \subset h(B(0,r_1))$.  Let $k>0$ be the largest integer such that $f^j(W)\subset U_0(p)$ for all $j=0, \dots, k$. 
Since $r_1 \ll r_\star$, the number $k$ is large (note that we do not intersect here $W$ with $X$).
There are two sub-cases.

\smallskip

2A. If $W(i)$ is a pullback of some $U$ in $\mathcal{U}_0 \setminus \{ U_0(p) \}$, then $k < N$,
no other elements of $\mathcal{U}_0$ are contained in $U_0(p)$.
Then, since $f^{k+1}(\pi(W(i)))$ belongs to $\mathcal{U}_{N-k-1}$ and by \eqref{eq:1}, we have
$$\Delta(f^{k+1}(\pi(W(i)))) < \pi.$$
In the case $g^k(W(i))=V_0$   or $V_1$, we obviously also have $\Delta(g^k(W(i))) < \pi$.

\smallskip

2B. Otherwise, $W(i)$ is a pullback of either $V_0$ or $V_1$
 for $g^{-\ell}$ with $\ell>0$. Then $\ell>N_0$ and by \eqref{eq:2}
the set $f^{k+N_0}(W)$ lies in the annulus
$$h(\{ \lambda^{-2} \leq r \leq 1 \})$$
hence by \eqref{eq:2} we obtain
$\Delta f^{k+N_0}(W)<\pi$.

Our $k$ is large, so $\Delta(W)\le \pi d^{-k}$, smaller than $\pi/2$.
In 2B. we have used the fact that each $\pi(W(i))$ is a pullback of $f^{k+N_0}(\pi(W(i)))\subset h(\overline{\mathbb{D}})$ for $f^{-(k+N_0)}$. (We would have troubles if we intersected with $X$ without invoking full pullbacks.)
 Hence all three $W(i)\cap X$ are entirely in $V_0$ or in $V_1$.
 This ends the proof of the claim for the case $n = 0$.

Finally, by taking preimages $\widetilde{\mathcal W}_{n+N}=g^{-n}(\widetilde{\mathcal W}_{N})$ in $\widetilde{\mathcal W}_n$ we obtain (a) 
for all $n \geq 0$.

\medskip

(b) It follows from the Hausdorff property of the topology on $\widetilde{W}_0$ and the fact that the covers $\widetilde{\mathcal{W}}_{n N}$ are a basis of the topology at $Y$, as established in Lemma \ref{L:basis} and Remark \ref{R:3}.





\end{proof}

Now, we can apply Frink's lemma (Lemma \ref{L:Frink-lemma}) to the sets 
$\Omega_0 = Y \times Y$ and $\Omega_n := \widetilde{\mathcal W}^{(2)}_{nN}$, for $n \geq 1$, obtaining a metric $\rho'$ on $Y$,
which is compatible with the topology introduced above, such that
$$\mesh_{\rho'} \left( \widetilde{\mathcal W}_{nN} \right) \leq 2^{-n}$$
for any $n \geq 0$. This shows that the metric $\rho'$ is exponentially contracting with respect to the cover $\widetilde{\mathcal W}_N$.

\begin{remark} \label{R:4}
\textup{If there are more than one periodic critical point, with period possibly higher than $1$, one extends the cover 
$\widetilde{\mathcal{U}}_0 = \pi^{-1}(\mathcal{U}_0)$ by adding two sets similar to $V_0, V_1$ for each $p \in \mathcal{E}^*$, and checks the hypotheses of Frink's lemma by applying the previous proof to $f^m$, where $m$ is the least common multiple of all periods of elements in $\mathcal{E}^*$. }
\end{remark}

\noindent \textbf{The [Expansion] axiom.} Finally, we prove that the cover $\widetilde{\mathcal W}_0$ satisfies [Expansion]. 

Let $\mathcal{V}$ be an arbitrary finite cover of $Y$. By Remark \ref{R:3}, 
there exists a cover of $Y$ by the sets
$U(x,n) \in \widetilde{\mathcal{W}}_n$, each being a subset of an element $V$ of $\mathcal{V}$ for $n\ge N(V,x)$.
Then, by \cite[Theorem 3.1.6]{Engelking}, together with compactness and the Hausdorff property, we can find, for each $V \in \mathcal{V}$, a compact set $V^*\subset V\cap Y$ so that the union of the $V^*$ covers $Y$. 

Let us now fix $V \in \mathcal{V}$. We shall prove that over all $x \in V^*$, the values $N(V,x)$ are uniformly bounded. Let  $V^c:= \overline{ \widetilde{W_1}} \setminus V$.
Recall that for each $q \in \mathcal{E}$ we denote as $m(q)$ the smallest integer $m \geq 1$ such that $f^m(q)$ 
is an element of $\mathcal{E}^*$.
 
\medskip
\textbf{Case 1.} 
Suppose that $\pi(V^*)\cap \pi(V^c)=\emptyset$. Since $\pi$ is continuous, these sets are compact, 
so their Euclidean distance in $S^2$ is positive, say $\delta_V>0$. So if $U\in \widetilde{\mathcal{U}}_n$
intersects both $V^*$ and $V^c$, the Euclidean diameter of $\pi(U)$ is at least $\delta_V$, so by \eqref{E:shrink} $n$ is bounded from above by some $N_V$, independent of $x\in V^*$. In other words $U(x,n)$ cannot intersect both $V^*$ and $V^c$ for $n>N_V$, hence is in $V$.

The same holds for components of $g^{-n}(V_i), i=1,2$, since their projections by $\pi$ are in 
$U_n\in \mathcal{U}_n$ by construction, with diameters tending to 0 by \eqref{E:shrink}. 

\medskip

\textbf{Case 2.} If $\pi(V^*)$ and  $\pi(V^c)$ intersect, then there exists $q\in\mathcal{E}$ such that both $V^*$ and $V^c$ intersect $S_q$. 
In such a case we have the: 

\smallskip
\textbf{Claim.} Among all $q$'s so that $V^*$ and $V^c$ intersect $S_q$, the set of possible $m(q)$ is bounded.

\smallskip
{\textit{Proof of the Claim.}} Otherwise, let $(q_n) \subseteq X$ be a sequence with $m(q_n) \to \infty$, and choose a convergent subsequence $q_n\to q^*$ using the metric $\rho_1$ on $X$. Let $x_n\in S_{q_n}\cap V^*$ and $y_n\in S_{q_n}\cap V^c$. 
If $q^*\notin\mathcal{E}$ then $x_n, y_n$ converge as $n\to\infty$ to the only preimage $\widetilde{q}^*$ of $q^*$ for $\pi$ because they enter a basis of neighborhoods of $\widetilde{q}^*$, which is the preimage of a basis at $q^*$. So $V^*$ and $V^c$ intersect, a contradiction.
If $q^*\in\mathcal{E}$ then let $U$ be an open neghborhood of $q^*$ given by Remark \ref{R:3}, and
let $\pi_{q^*}$ be the map from $\pi^{-1}(U)$ in $\widetilde{W}_0$ to $\hat U:=(U\setminus \{q^*\}) \sqcup S_{q^*}$ 
which blows down every circle except $S_{q^*}$.
The topology and metric on $\hat U$ are pullbacks under $f^{m(q^*)}$ of the Euclidean metric and topology from the chart $h$ on a neighborhood of $p=f^{m(q^*)}(q^*)$, extended to the boundary circles. 
Next choose from $\pi_{q^*}(x_n)=\pi_{q^*}(y_n)$ a subsequence convergent to a point $x$ in $S_{q^*}$.
By definition $\pi_{q^*}$ is continuous, so $\pi_{q^*}(V^*)$ and $\pi_{q^*}(V^c)$ are compact, hence they intersect at $x$. But $\pi_{q^*}^{-1}(x)$ is one point where therefore $V^*$ and $V^c$ intersect, a contradiction. \qed
  
\medskip
Since $m(q)$ is bounded, we need to consider only a finite number of $q$'s. 
In each $S_q$,   $V^*$ and $V^c$ are compact disjoint so for $n$ large the components of $g^{-n}(V_i)$ in $S_q$ have diameters too small to intersect both $V^*$ and $V^c$.
  
\
The conclusion is that $\widetilde{\mathcal{W}}_n$ is subordinated to $\mathcal{V}$ for all $n$ large enough. The \textup[Expansion] axiom has been verified, which completes the proof of Proposition \ref{P:blowup}. 

\end{proof}

Let us remark that the above proof of [Expansion] together with Theorem \ref{T:exp-contr} yields another proof of the 
existence of an exponentially contracting metric $\rho'$ on $Y$.

\subsection{Topologically H\"older functions} \label{S:top-holder}

Given a cover $\mathcal{V}$ of a topological space $X$ and a function $f : X \to \mathbb{R}$,
we define the \emph{total variation} of $f$ with respect to $\mathcal{V}$ as
$$\textup{var}_\mathcal{V}  f := \sup_{V \in \mathcal{V}} \sup_{x, y \in V} |f(x) - f(y)|.$$
Moreover, given a sequence $(\mathcal{V}_n)_{n \geq 0}$ of covers of a topological space $X$, a function $f : X \to \mathbb{R}$ is \emph{topologically H\"older} with respect to $(\mathcal{V}_n)$ if
there exist constants $C, \alpha > 0$ such that
$$\textup{var}_{\mathcal{V}_n} f \leq C e^{-n \alpha}$$
for any $n \geq 0$.

The definition is inspired by Bowen's definition of H\"older continuous potentials for the shift map (see \cite[Theorem 1.2]{bowen}).
Let us collect a few fundamental properties about these functions, whose proofs are straightforward.

\begin{lemma} \label{L:topHolder}
Topologically H\"older functions satisfy the following properties.
\begin{enumerate}
\item
Let $f : W_1 \to W_0$ be a finite branched cover with repellor $X$, let $\mathcal{U}$ be a cover of $W_1$, and
let $\rho$ be an exponentially contracting metric on $X$.
Then a function $\varphi : X \to \mathbb{R}$ which is H\"older continuous with respect to $\rho$ is topologically H\"older
with respect to the sequence of covers $\mathcal{U}_n$, whose elements are the connected components of $f^{-n}(\mathcal{U})$.
\item
If $\varphi : X \to \mathbb{R}$ is topologically H\"older with respect to $(\mathcal{U}_n)$ and $\pi : Y \to X$ is continuous,
then $\varphi \circ \pi$ is topologically H\"older with respect to $(\mathcal{V}_n) := (\pi^{-1} (\mathcal{U}_n))$.
\item
If $X = \Sigma$ is the full shift space and $(\mathcal{U}_n)$ are the partitions in cylinders of rank $n$, then $\varphi : X \to \mathbb{R}$
is topologically H\"older with respect to $(\mathcal{U}_n)$ if and only if it is H\"older continuous with the respect to the standard metric on $\Sigma$.
\end{enumerate}
\end{lemma}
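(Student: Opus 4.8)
The plan is to verify the three items in turn, each by a direct unwinding of the definitions of $\diam$ and of $\textup{var}_{\mathcal{V}}$; nothing deeper is needed. For (1), I would combine the H\"older estimate $|\varphi(x)-\varphi(y)|\le K\rho(x,y)^{\beta}$ with the exponential contraction bound $\sup_{U\in\mathcal{U}_n}\diam_\rho(U)\le C_0\theta^{n}$, $\theta<1$, supplied by Theorem \ref{T:exp-contr}. For $x,y$ lying in a common element $U\in\mathcal{U}_n$ this gives $|\varphi(x)-\varphi(y)|\le K(C_0\theta^{n})^{\beta}$; taking the supremum over such pairs and over $U\in\mathcal{U}_n$ yields $\textup{var}_{\mathcal{U}_n}\varphi\le (KC_0^{\beta})\,e^{-n\alpha}$ with $\alpha=\beta\,|\log\theta|>0$, which is exactly the topologically H\"older inequality.

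For (2), the only observation needed is that every element of $\mathcal{V}_n=\pi^{-1}(\mathcal{U}_n)$ has the form $\pi^{-1}(U)$ for some $U\in\mathcal{U}_n$, so that $x,y\in\pi^{-1}(U)$ forces $\pi(x),\pi(y)\in U$ and hence $|\varphi(\pi(x))-\varphi(\pi(y))|\le\textup{var}_{\mathcal{U}_n}\varphi$; taking suprema gives $\textup{var}_{\mathcal{V}_n}(\varphi\circ\pi)\le\textup{var}_{\mathcal{U}_n}\varphi\le Ce^{-n\alpha}$, so the constants $C,\alpha$ transfer verbatim, with continuity of $\pi$ entering only to guarantee that the $\mathcal{V}_n$ are open covers of $Y$. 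For (3), I would first record that in the standard metric $\rho(x,y)=2^{-r}$, where $r$ denotes the length of the longest common prefix of $x$ and $y$ (and $\rho(x,x)=0$), two sequences lie in a common cylinder of rank $n$ precisely when they agree on the first $n$ symbols, i.e. when $\rho(x,y)\le 2^{-n}$; in particular every rank-$n$ cylinder has $\diam_\rho\le 2^{-n}$, so $\rho$ is exponentially contracting with respect to $(\mathcal{U}_n)$ and one implication is immediate from (1). For the converse, given $\textup{var}_{\mathcal{U}_n}\varphi\le Ce^{-n\alpha}$ and any $x\ne y$ with common prefix of length $r$, both points lie in a common rank-$r$ cylinder, whence $|\varphi(x)-\varphi(y)|\le Ce^{-r\alpha}=C(2^{-r})^{\alpha/\log 2}=C\,\rho(x,y)^{\alpha/\log 2}$, which is H\"older continuity with exponent $\alpha/\log 2$.

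I do not anticipate a genuine obstacle: the argument is entirely bookkeeping. The only points deserving a moment's care are keeping straight the two senses in which the phrase ``exponentially contracting'' is used (with respect to a single cover $\mathcal{U}$ via its iterated preimages $\mathcal{U}_n$, as in Theorem \ref{T:exp-contr}, versus with respect to a prescribed sequence of covers), and fixing the normalization of the metric on $\Sigma$ so that agreement of the first $n$ symbols corresponds to distance at most $2^{-n}$.
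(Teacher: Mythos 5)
Your proof is correct, and it is exactly the routine unwinding of the definitions that the paper has in mind: the paper states Lemma \ref{L:topHolder} with the remark that the proofs are straightforward and omits them, and your three verifications (H\"older plus exponentially decaying mesh, transfer of variation under pullback by $\pi$, and the identification of rank-$n$ cylinders with $2^{-n}$-balls on $\Sigma$ in both directions) supply precisely that argument, with the normalization caveat about the metric on $\Sigma$ appropriately noted.
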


\begin{proposition} \label{P:unique-blowup}
Let $f : W_1 \to W_0$ be a weakly coarse expanding system, with $W_0 \subseteq S^2$, and let $(X, \rho)$ be its repellor, with $\rho$ an exponentially contracting metric.
Then for any H\"older continuous potential $\varphi : (X, \rho) \to \mathbb{R}$ there exists a unique equilibrium state $\mu_X$ on $X$.
\end{proposition}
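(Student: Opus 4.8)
The plan is to reduce the statement about $f$ on $S^2$ (with possible periodic critical points) to the already-established Theorem~\ref{T:main}~(1) for systems without periodic critical points, via the blow-up construction of Proposition~\ref{P:blowup}. First I would invoke Proposition~\ref{P:blowup} to obtain the strongly path connected space $\widetilde W_0$, the weakly coarse expanding system $g : \widetilde W_1 \to \widetilde W_0$ without periodic critical points, its repellor $Y$, the exponentially contracting metric $\rho'$ on $Y$, and the continuous projection $\pi : \widetilde W_0 \to W_0$ with $\pi \circ g = f \circ \pi$. Note that $\pi$ restricts to a continuous surjection $Y \to X$: surjectivity holds because $Y = \pi^{-1}(X)$ by construction, and continuity is part of the Proposition.

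Next I would transport the potential. Given $\varphi : (X,\rho) \to \mathbb{R}$ H\"older, set $\widetilde\varphi := \varphi \circ \pi : Y \to \mathbb{R}$. By Lemma~\ref{L:topHolder}(1), $\varphi$ is topologically H\"older with respect to the cover $\mathcal{U}$ on $X$; by Lemma~\ref{L:topHolder}(2), $\widetilde\varphi$ is topologically H\"older with respect to the pulled-back covers $\pi^{-1}(\mathcal U_n)$ on $Y$. One should check this matches (up to the comparison of covers) the cover $\widetilde{\mathcal W}_n$ with respect to which $\rho'$ is exponentially contracting, so that $\widetilde\varphi$ is H\"older — or rather topologically H\"older — on $(Y, \rho')$; since all results of Theorem~\ref{T:main} work for topologically H\"older potentials (as remarked after Theorem~\ref{T:main-sphere}), it suffices that $\widetilde\varphi$ be topologically H\"older with respect to $\widetilde{\mathcal W}_n$. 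Then Theorem~\ref{T:main}~(1) applied to $g$ yields a unique equilibrium state $\mu_Y$ on $Y$ for $\widetilde\varphi$.

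Now I would push forward: set $\mu_X := \pi_\star \mu_Y$, and argue exactly as in the chain of Lemmas in Section~4 (with $\pi : Y \to X$ in place of $\pi : \Sigma \to X$). The two ingredients needed are: (i) $P_{top}(f,\varphi) = P_{top}(g, \widetilde\varphi)$, which follows because $\pi$ is a semiconjugacy onto (so pressure does not increase under the factor map in one direction, $P_{top}(f,\varphi) \le P_{top}(g,\widetilde\varphi)$) together with a no-entropy-drop statement $h_{\mu}(g) = h_{\pi_\star\mu}(f)$ for $g$-invariant $\mu$ on $Y$; and (ii) lifting of equilibrium states, via Riesz extension as in Section~4. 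For uniqueness, any equilibrium state $\mu_X'$ on $X$ lifts to an equilibrium state on $Y$ for $\widetilde\varphi$, which is unique, forcing $\mu_X' = \pi_\star\mu_Y = \mu_X$.

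The main obstacle is establishing the no-entropy-drop statement $h_\mu(g) = h_{\pi_\star\mu}(f)$ for the factor map $\pi : Y \to X$. Unlike the symbolic case, the fibers of $\pi$ over $X$ are not finite — over each point of the grand orbit $\mathcal{E}$ of a periodic critical point, $\pi$ collapses a whole circle $S_q$ — so one cannot directly quote Lemma~\ref{L:no-drop}. However, $g$ restricted to $\bigcup_{q} S_q$ is a (countable union of) circle expanding maps $\theta \mapsto d\theta$, on which every invariant measure has finite entropy, and more to the point the set $\mathcal{E}$ is countable, so the "extra" part of $Y$ over $X$ carries bounded topological entropy; a Ledrappier--Walters / Bowen-type entropy formula $h_\mu(g) \le h_{\pi_\star\mu}(f) + \sup_{x} h_{top}(g, \pi^{-1}(x))$ together with $\pi^{-1}(x)$ being a single point or a circle (on which $g$ is a homeomorphism or a degree-$d$ expanding map, in any case with finite fiber entropy that is dominated after averaging) gives the needed equality; the reverse inequality $h_\mu(g) \ge h_{\pi_\star\mu}(f)$ is automatic since $g$ is an extension of $f$. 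Making this precise — in particular verifying that the fiberwise contribution genuinely vanishes in the entropy of any $g$-invariant measure, which should follow because $\mu(\bigcup_q S_q) = 0$ unless $\mu$ is supported on a periodic circle orbit, and in that exceptional case both sides are handled directly — is where the real work lies. Everything else is a routine transcription of Section~4.
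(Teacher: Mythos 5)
Your overall route is the paper's: blow up via Proposition \ref{P:blowup}, run the no-periodic-critical-point machinery on $Y$ (the paper does this by going straight to the shift $\Sigma$ through the coding $\Pi$, you do it by quoting Theorem \ref{T:main}(1), which amounts to the same thing), then push the equilibrium state down through $\pi : Y \to X$. But there is a genuine gap exactly at the point you flag. The blanket statement in your ingredient (i) --- $h_\mu(g)=h_{\pi_\star\mu}(f)$ for every $g$-invariant $\mu$ on $Y$ --- is false: if $p$ is a periodic critical point of period $k$ with $D=\deg(f^k;p)\ge 2$, normalized Lebesgue measure on the cycle of circles $S_p,\dots,S_{f^{k-1}(p)}$ is $g$-invariant with entropy $\tfrac1k\log D>0$, while its image under $\pi$ is the periodic orbit measure, of entropy $0$. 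Consequently neither the identity $P_{top}(g,\widetilde\varphi)=P_{top}(f,\varphi)$ nor the claim that $\pi_\star\mu_Y$ is an equilibrium state follows as you derive them: a priori one only gets $P_{top}(g,\widetilde\varphi)=\max\bigl(P_{top}(f,\varphi),\ \max_p \tfrac1k(\log\deg(f^k;p)+S_k\varphi(p))\bigr)$, and if the second term were the larger one, the unique equilibrium state on $Y$ would be carried by the circles and its pushforward (a periodic orbit measure) would not solve the problem. Your proposed fix --- ``$\mu(\bigcup_q S_q)=0$ unless $\mu$ is supported on a periodic circle orbit, and in that exceptional case both sides are handled directly'' --- leaves precisely this exceptional case unaddressed; it cannot be ``handled directly'', it must be \emph{excluded}, and excluding it is the real content of the proof.

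The paper excludes it with an ingredient you never invoke: full support of the equilibrium state inherited from the shift. The measure $\mu_Y$ is $\Pi_\star$ of Bowen's Gibbs state on $\Sigma$, hence ergodic and positive on nonempty open sets of $Y$; an ergodic invariant measure charging $\bigcup_q S_q$ must be carried by the finitely many periodic circles (strictly preperiodic circles are non-recurrent), and these have empty interior in $Y$ since $S_p$ lies in the closure of the lift of $X\setminus\{p\}$ --- contradicting full support. Hence $\mu_Y\bigl(\bigcup_q S_q\bigr)=0$, and then no Ledrappier--Walters formula is needed: $\pi$ is injective off the circles, so it is a measure-theoretic isomorphism, entropy and integral are preserved, the pressures coincide, $\pi_\star\mu_Y$ is an equilibrium state, and uniqueness follows by lifting as you indicate. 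Note also that one cannot hope to dispose of the circle measures by an a priori pressure comparison (proving $\tfrac1k(\log\deg(f^k;p)+S_k\varphi(p))<P_{top}(\varphi)$ directly from the dynamics on $X$): that inequality is only obtained a posteriori from the full-support argument. With the ergodicity-plus-full-support step inserted, your sketch becomes essentially the paper's proof.
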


\begin{proof}
Let $\varphi : X \to \mathbb{R}$ be a H\"older continuous potential with respect to an exponentially contracting metric $\rho$.
By Proposition \ref{P:blowup}, there is a metric space $(Y, \rho')$ with a continuous map $g : Y \to Y$ such that $\rho'$
is exponentially contracting, and a continuous projection $\pi: Y \to X$
such that $\pi \circ g = f \circ \pi$. Moreover, by Proposition \ref{P:semiconjugacy}, there exists a coding map $\Pi : \Sigma \to Y$
with $\Pi \circ \sigma = g \circ \Pi$.

By Lemma \ref{L:topHolder}(1), the map $\varphi : X \to \mathbb{R}$ is topologically H\"older. Then, by Lemma \ref{L:topHolder} (2), the map $\varphi \circ \pi \circ \Pi$
is also topologically H\"older, hence by Lemma \ref{L:topHolder} (3) the map $\varphi \circ \pi \circ \Pi$ is also H\"older continuous with respect to the standard metric
on the shift space $\Sigma$.

Thus, by Bowen \cite[Theorem 1.22]{bowen}, there is a unique equilibrium state $\mu$ on $\Sigma$ for the potential $\varphi \circ \pi \circ \Pi$. This measure is ergodic with respect to the shift map, and  positive on non-empty open sets (as a consequence of the Gibbs property \cite[Theorem 1.2]{bowen}).

Notice that $\Pi_*(\mu)$ is $0$ on $S_p$ and its $g^n$-pre-images. Otherwise, by ergodicity it
would be supported on $S_p$  (if it charged other preimages of $S_p$
it would charge preimages under iterates, therefore being infinite).
So $\mu$ would be 0 on the open nonempty set $\Sigma\setminus \Pi^{-1}(S_p)$, a contradiction.

So $\pi\circ \Pi$ preserves the entropy of $\mu$ since $\Pi$ does (Lemma \ref{L:no-drop})
and $\pi$ does as it is a measurable isomorphism $\pi : Y \setminus \bigsqcup_{p \in \mathcal{E}} S_p \to X \setminus \mathcal{E}$.
Moreover, if we set $\mu_X:=(\pi \circ \Pi)_*(\mu)$, then $\int_\Sigma \varphi\circ \pi \circ \Pi \ d\mu = \int_X \varphi \ d \mu_X$
by definition.

Hence the projection preserves pressure; namely, by the variational principle
$$
P_{top}(f,\varphi) \ge h_{\mu_X}(f) + \int_X \varphi \ d\mu_X = h_\mu(\sigma) + \int_\Sigma \varphi\circ \pi \circ \Pi \ d\mu=
P_{top}(\sigma, \varphi\circ \pi \circ \Pi).
$$
Since the opposite inequality is true in general, following from definitions, this yields the equality $P_{top}(f,\varphi) =
P_{top}(\sigma, \varphi\circ \pi \circ \Pi)$.
In particular, we have proved that $\mu_X$  is an equilibrium state.

This equilibrium state is unique; indeed, any other equilibrium state would have by Lemma \ref{L:measure-lift} a lift to $\Sigma$,
and this would also be an equilibrium state, as the lift cannot decrease either the entropy nor the integral, contradicting the uniqueness on $\Sigma$.
\end{proof}

\begin{proof}[Proof of Theorem \ref{T:main-sphere} (1)-(5)]
By Proposition \ref{P:unique-blowup}, for any H\"older continuous potential $\varphi : X \to \mathbb{R}$ there is a unique equilibrium state $\mu_X$ on $X$, and this equilibrium state is obtained as the pushforward of the unique equilibrium state $\nu$ for the potential
$\varphi \circ \pi \circ \Pi$, where $\pi : Y \to X$ is the blowdown map and $\Pi : \Sigma \to Y$ is the coding map.
Then, as in the case without periodic critical points, for any H\"older continuous observable $\psi$ statistical laws for the sequence $(\psi \circ f^n)_{n \in \mathbb{N}}$ follow from the well-known statistical laws for the shift map with respect to the sequence $(\psi \circ \pi \circ \Pi \circ \sigma^n)_{n \in \mathbb{N}}$.
\end{proof}

\section{The cohomological equation}

Let us now see the proof of \eqref{itm:sigma0} and \eqref{itm:cohom} in Theorems \ref{T:main} and \ref{T:main-sphere}.

\begin{proposition}\label{prop:homology}
Let $\rho$ be a visual metric on $X$. If the measures $\mu_\varphi$, $\mu_\psi$ coincide then there exist a H\"older continuous function $u:X\to \mathbb R$ and a constant $K\in \mathbb R$ such that
\begin{equation} \label{E:cohom}
\varphi-\psi=u\circ f-u+K.
\end{equation}
\end{proposition}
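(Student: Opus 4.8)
The plan is to transport the cohomological equation from the symbolic level down to $X$ via the semiconjugacy $\pi : \Sigma \to X$ (or $\pi \circ \Pi : \Sigma \to X$ in the presence of periodic critical points). First I would observe that the hypothesis $\mu_\varphi = \mu_\psi$, together with the uniqueness of equilibrium states (Proposition \ref{P:unique-blowup}, or the case without periodic critical points), forces the lifted potentials $\widetilde\varphi := \varphi \circ \pi \circ \Pi$ and $\widetilde\psi := \psi \circ \pi \circ \Pi$ on $\Sigma$ to have the same (unique) equilibrium state on $\Sigma$. Both $\widetilde\varphi$ and $\widetilde\psi$ are H\"older continuous on $\Sigma$ by Lemma \ref{L:topHolder}. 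The classical theory of thermodynamic formalism on the full shift (Bowen \cite{bowen}, see also \cite{PU}) then gives a H\"older continuous function $\widetilde u : \Sigma \to \mathbb{R}$ and a constant $K \in \mathbb{R}$ with
$$\widetilde\varphi - \widetilde\psi = \widetilde u \circ \sigma - \widetilde u + K.$$
(This is the standard Liv\v{s}ic-type statement: two H\"older potentials have the same equilibrium state iff they are cohomologous up to a constant; one recovers $\widetilde u$, for instance, from the ratio of Gibbs normalizations, or from summing the telescoping series along the unique backward orbit in a fixed cylinder.)

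The main work is then to descend $\widetilde u$ to a continuous function $u$ on $X$, i.e. to show $\widetilde u$ is constant on the fibers of $\pi \circ \Pi$, so that it factors as $\widetilde u = u \circ (\pi \circ \Pi)$. The key point is that whenever $(\pi\circ\Pi)(\alpha) = (\pi\circ\Pi)(\beta)$, the two forward orbits $\{(\pi\circ\Pi)(\sigma^n\alpha)\}$ and $\{(\pi\circ\Pi)(\sigma^n\beta)\}$ coincide, so $\widetilde\varphi(\sigma^n\alpha) = \widetilde\varphi(\sigma^n\beta)$ and likewise for $\widetilde\psi$ for all $n \ge 0$; hence the two functions $n \mapsto \widetilde u(\sigma^n\alpha)$ and $n \mapsto \widetilde u(\sigma^n\beta)$ satisfy the same inhomogeneous recursion $v(n+1) = v(n) + \widetilde\psi(\sigma^n\cdot) - \widetilde\varphi(\sigma^n\cdot) + K$, so their difference $\widetilde u(\sigma^n\alpha) - \widetilde u(\sigma^n\beta)$ is \emph{constant} in $n$. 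To conclude this constant is zero, I would use that backward orbits contract: by the exponential contraction estimate (as in Lemma \ref{L:h-exp}, $\rho(z_n(\gamma), \pi(\gamma)) \le C\lambda^n$) together with H\"older continuity of $\widetilde u$, one shows that along a suitable sequence $n_k \to \infty$ the points $\sigma^{n_k}\alpha$ and $\sigma^{n_k}\beta$ are driven close in $\Sigma$ — more precisely, one picks preimages making the tails agree on arbitrarily long prefixes — forcing $\widetilde u(\sigma^{n_k}\alpha) - \widetilde u(\sigma^{n_k}\beta) \to 0$, hence the constant vanishes and $\widetilde u(\alpha) = \widetilde u(\beta)$. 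This defines $u : X \to \mathbb{R}$ with $\widetilde u = u \circ \pi \circ \Pi$; surjectivity and continuity (indeed H\"older continuity with respect to the visual metric, which is H\"older-equivalent to the metric produced by the coding) of $\pi\circ\Pi$ then transfer H\"older continuity of $\widetilde u$ to $u$, and composing the symbolic cohomology identity with the semiconjugacy and cancelling $\pi\circ\Pi$ yields $\varphi - \psi = u \circ f - u + K$ on $X$.

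The step I expect to be the main obstacle is precisely showing the fiberwise constant $\widetilde u(\sigma^n\alpha) - \widetilde u(\sigma^n\beta)$ equals zero: one must carefully produce, for two sequences with the same $\pi\circ\Pi$-image, a sequence of times $n_k$ and a genuine pair of symbolic points whose $\sigma^{n_k}$-shifts have long common prefixes, exploiting surjectivity of the coding and the contraction of preimages — this is the analogue of the "closing up" argument in Liv\v{s}ic theory but carried out through a non-injective semiconjugacy, and the bounded-distortion/contraction bookkeeping has to be done with some care (and, in the periodic-critical case, one also needs that $\Pi_*\mu$ charges no blown-up circle, already established in Proposition \ref{P:unique-blowup}, so that the fiber structure of $\pi$ off the circles is a genuine measurable isomorphism). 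Everything else — the symbolic Liv\v{s}ic statement, the H\"older transfer via Lemma \ref{L:topHolder}, and the final cancellation — is routine.
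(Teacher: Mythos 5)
Your reduction to the symbolic level is where the paper also starts (the coincidence of the lifted equilibrium states on $\Sigma$ is exactly how Proposition \ref{P:periodic} begins), but from there the paper does \emph{not} solve the cohomological equation upstairs and push the transfer function down; it only extracts from Bowen the boundedness of $S_n(\eta\circ\pi)$ on $\Sigma$, hence of $S_n\eta$ on $X$ by surjectivity, deduces $S_k\eta(p)=0$ at every $f$-periodic point, and then runs the Liv\v{s}ic construction entirely downstairs: $u$ is defined on a dense forward orbit by Birkhoff sums, and H\"older continuity is proved via a closing lemma (Lemma \ref{lem:spec}) that uses the uniform expansion of balls in the visual metric to shadow a near-return by a genuine periodic orbit. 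Your route instead hinges on descending the symbolic transfer function $\widetilde u$, and that is where there is a genuine gap.

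The problem is the step you yourself flag: showing that the constant $\widetilde u(\sigma^n\alpha)-\widetilde u(\sigma^n\beta)$ vanishes when $(\pi\circ\Pi)(\alpha)=(\pi\circ\Pi)(\beta)$. Your mechanism --- finding times $n_k$ at which $\sigma^{n_k}\alpha$ and $\sigma^{n_k}\beta$ share long common prefixes --- is not available: the pair $(\alpha,\beta)$ is fixed, so you cannot ``pick preimages'' to rearrange their tails, and for two distinct codings of the same point the shifted sequences need never come close in $\Sigma$. Already for the doubling map the two codings of a point in the grand orbit of the fixed point shift to the constant sequences $\overline{0}$ and $\overline{1}$, which stay at maximal symbolic distance for every $n$; analogous fibers occur for the coding tree here whenever a backward orbit meets the branch set. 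So the contraction of backward orbits in $X$ (your citation of the estimate $\rho(z_n(\gamma),\pi(\gamma))\le C\lambda^n$) says nothing about proximity of the shifted \emph{symbolic} points, and the limit $\widetilde u(\sigma^{n_k}\alpha)-\widetilde u(\sigma^{n_k}\beta)\to 0$ cannot be extracted. Fiber-constancy of $\widetilde u$ is in fact true, but the only evident proof is a posteriori: once the proposition is known, $u\circ\pi\circ\Pi$ solves the same coboundary equation and solutions on the transitive shift are unique up to an additive constant --- so arguing this way is circular. A secondary gap: even granting fiber-constancy, the quotient-map argument only gives continuity of $u$; H\"older continuity of $u$ with respect to the visual metric does not follow from H\"older continuity of $\widetilde u$ on $\Sigma$, because nearby points of $X$ may admit only codings that are far apart in $\Sigma$ (there is no ``H\"older equivalence'' between the visual metric and a metric induced by the coding). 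The paper obtains the H\"older bound directly on $X$ from the closing-lemma estimate together with $S_l\eta=0$ along the shadowing periodic orbit, and that, or some substitute for it, is what your argument is missing.
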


Assume that $\mu_\varphi=\mu_\psi=\mu$. We know that
$\mu_\varphi=\pi_*\nu_{\varphi\circ \pi}$ and $\mu_\psi=\pi_*\nu_{\psi\circ \pi}$ where
$\nu_{\varphi\circ \pi}$, $\nu_{\psi\circ \pi}$ are, respectively, the equilibrium states for $\varphi\circ \pi$ and $\psi\circ\pi$.
Put $\eta=\varphi-\psi$. Adding constants to $\varphi, \psi$, we may assume that $\int\varphi \ d\mu=0$ and $\int\psi \ d\mu=0$, so that $\int\eta \ d\mu=0$.
Denote by $S_k\eta(x)$ the sum
$$S_k\eta(x)=\eta(x)+\dots+\eta(f^{k-1}(x)).$$

We can detect equality between invariant measures by looking at periodic points:

\begin{proposition} \label{P:periodic}
If for H\"older continuous $\varphi, \psi:X\to \mathbb R$ the equilibrium states satisfy $\mu_\varphi=\mu_\psi$
(denote them by $\mu$) and if
$\int \eta \ d\mu=0$ for $\eta=\psi-\varphi$,
then there exists $C>0$ such that
\begin{equation} \label{E:per-bounded}
|S_n \eta (x)|<C
\end{equation}
for all $x\in X$  and $n \in \mathbb{N}$.
\end{proposition}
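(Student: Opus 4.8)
The plan is to transfer the estimate to the full shift $\Sigma$, where it becomes a routine consequence of the Gibbs property, and then push the resulting bound back down to $X$. Write $\varpi\colon\Sigma\to X$ for the coding semiconjugacy: $\varpi=\pi$ from Proposition~\ref{P:semiconjugacy} when there are no periodic critical points, and $\varpi=\pi\circ\Pi$ (the coding map of the blown-up system composed with the blowdown $\pi\colon Y\to X$) in the situation of Proposition~\ref{P:blowup}. In either case $\varpi$ is surjective and $\varpi\circ\sigma=f\circ\varpi$, so by Lemma~\ref{L:topHolder} the functions $\widetilde\varphi:=\varphi\circ\varpi$, $\widetilde\psi:=\psi\circ\varpi$ and $\widetilde\eta:=\eta\circ\varpi=\widetilde\psi-\widetilde\varphi$ are H\"older continuous on $\Sigma$, and $S_n\eta(x)=S_n\widetilde\eta(\alpha)$ whenever $\varpi(\alpha)=x$. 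Since $\varpi$ is onto, it therefore suffices to bound $|S_n\widetilde\eta(\alpha)|$ uniformly over $\alpha\in\Sigma$ and $n\in\mathbb N$.

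The heart of the argument is to show that the unique equilibrium states $\nu_{\widetilde\varphi}$ and $\nu_{\widetilde\psi}$ on $\Sigma$ (uniqueness by Bowen \cite[Theorem 1.22]{bowen}) in fact coincide. For this I would verify that \emph{any} $\sigma$-invariant probability $\nu'$ on $\Sigma$ with $\varpi_*\nu'=\mu$ is already an equilibrium state for $\widetilde\varphi$. Indeed $\int\widetilde\varphi\,d\nu'=\int\varphi\,d\mu$, and $h_{\nu'}(\sigma)=h_\mu(f)$: when there are no periodic critical points this is exactly Lemma~\ref{L:no-drop}, while in the blown-up case Lemma~\ref{L:no-drop} applied to $g$ and the coding $\Pi$ gives $h_{\nu'}(\sigma)=h_{\Pi_*\nu'}(g)$, and since $\Pi_*\nu'$ gives mass $\mu(\mathcal E)=0$ to $\bigsqcup_{p\in\mathcal E}S_p$ — the vanishing $\mu_\varphi(\mathcal E)=\mu_\psi(\mathcal E)=0$ being precisely what was shown in the proof of Proposition~\ref{P:unique-blowup} — the blowdown $\pi$ restricts to a measure isomorphism of the $g$-invariant full-measure set $Y\setminus\bigsqcup_{p\in\mathcal E}S_p$ onto $X\setminus\mathcal E$, conjugating $g$ and $f$, so that $h_{\Pi_*\nu'}(g)=h_\mu(f)$. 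Consequently $h_{\nu'}(\sigma)+\int\widetilde\varphi\,d\nu'=h_\mu(f)+\int\varphi\,d\mu=P_{top}(\varphi)=P_{top}(\widetilde\varphi)$, so $\nu'$ is an equilibrium state for $\widetilde\varphi$ and hence $\nu'=\nu_{\widetilde\varphi}$. Taking $\nu'=\nu_{\widetilde\psi}$, which pushes forward to $\mu_\psi=\mu$, yields $\nu_{\widetilde\psi}=\nu_{\widetilde\varphi}=:\nu$.

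Next I would invoke the Gibbs property (Bowen \cite[Theorem 1.2]{bowen}): there is a constant $C_0\ge1$ such that for every $\alpha\in\Sigma$, every $n\ge1$, and the depth-$n$ cylinder $C_n(\alpha)$ through $\alpha$,
$$
C_0^{-1}\le\nu\bigl(C_n(\alpha)\bigr)\exp\bigl(nP_{top}(\widetilde\varphi)-S_n\widetilde\varphi(\alpha)\bigr)\le C_0,
$$
and likewise with $\widetilde\psi$ in place of $\widetilde\varphi$ — both estimates for the \emph{same} measure $\nu$. Dividing the two and taking logarithms gives $\bigl|S_n\widetilde\eta(\alpha)-n\bigl(P_{top}(\widetilde\psi)-P_{top}(\widetilde\varphi)\bigr)\bigr|\le 2\log C_0$ for all $\alpha$ and $n$. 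Finally, the coding preserves pressure, $P_{top}(\widetilde\varphi)=P_{top}(\varphi)$ and $P_{top}(\widetilde\psi)=P_{top}(\psi)$ (the second half of the no-entropy-drop computation, resp.\ the pressure identity in the proof of Proposition~\ref{P:unique-blowup}), while $P_{top}(\psi)-P_{top}(\varphi)=\int\psi\,d\mu-\int\varphi\,d\mu=\int\eta\,d\mu=0$ by hypothesis (using that $\mu$ is the equilibrium state for both $\varphi$ and $\psi$). Hence $|S_n\widetilde\eta(\alpha)|\le 2\log C_0=:C$ for all $\alpha\in\Sigma$ and $n\in\mathbb N$, and by the reduction of the first paragraph $|S_n\eta(x)|\le C$ for all $x\in X$ and $n\in\mathbb N$, which is the claim.

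The step I expect to be the main obstacle is the equality $\nu_{\widetilde\varphi}=\nu_{\widetilde\psi}$: it amounts to ruling out that an invariant lift of $\mu$ to $\Sigma$ could change entropy, and in the presence of periodic critical points this hinges on the fact — already secured in Proposition~\ref{P:unique-blowup} — that the relevant equilibrium states charge none of the blown-up circles. Everything else is routine, given the Gibbs property and the no-entropy-drop results of Sections~3 and~5.
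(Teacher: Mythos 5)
Your proposal is correct and follows essentially the same route as the paper: equality $\mu_\varphi=\mu_\psi$ is lifted to equality of the equilibrium states on the shift (any invariant lift of $\mu$ is an equilibrium state, plus uniqueness on $\Sigma$), the uniform bound on $S_n\widetilde\eta$ is obtained from Bowen's theory, and surjectivity of the coding pushes it down to $X$. The only cosmetic difference is that you re-derive the bounded Birkhoff-sum estimate by dividing the two Gibbs inequalities (and kill the pressure difference via the common equilibrium state), whereas the paper simply cites Bowen's Theorem 1.28, and you spell out the blown-up case which the paper defers to a later remark.
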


\begin{proof}[Proof of Proposition~\ref{P:periodic}]
If $\mu_\varphi$ and $\mu_\psi$ coincide, then also $\nu_{\varphi\circ \pi}$ and $\nu_{\psi\circ\pi}$ coincide.
Indeed, the proof of Lemma \ref{L:lift-measure} shows that any lift of $\mu_\varphi$ is an equilibrium state for $\varphi \circ \pi$. Hence, $\nu_{\psi \circ \pi}$ is
an equilibrium state for $\varphi \circ \pi$, but equilibrium states are unique on the shift space, hence $\nu_{\psi \circ \pi} = \nu_{\varphi \circ \pi}$.
But then (see, e.g. \cite{bowen}, Theorem 1.28) there exists a
constant $C$ such that  for every $\omega \in \Sigma$ and every $n$,
$$|S_n(\eta\circ \pi)(\omega)|\le C.$$
Since the coding $\pi$ is onto, this also implies \eqref{E:per-bounded}.
\end{proof}

Note that $\pi^{-1}(p)$ is not necessarily finite, so it may not be a periodic orbit of $\sigma$.
So, we conclude the following:

\begin{corollary} \label{C:periodic-zero}
If $\mu_\varphi$, $\mu_\psi$ coincide and the integral $\int\eta d\mu=\int(\varphi-\psi)d\mu=0$ then for every periodic point $p$
with $f^k(p)=p$ we have that
\begin{equation}\label{sum_periodic_zero}
S_k\eta(p)= 0.
\end{equation}
\end{corollary}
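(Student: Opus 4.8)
The plan is to deduce Corollary~\ref{C:periodic-zero} from Proposition~\ref{P:periodic} by a standard iteration/averaging argument along the periodic orbit. Let $p$ be a periodic point with $f^k(p)=p$. The key observation is the cocycle identity: for any $m\ge 1$, applying $S_{mk}$ at $p$ and using periodicity $f^{jk}(p)=p$ gives
\[
S_{mk}\eta(p)=\sum_{j=0}^{m-1} S_k\eta\big(f^{jk}(p)\big)=\sum_{j=0}^{m-1} S_k\eta(p)=m\,S_k\eta(p).
\]
On the other hand, Proposition~\ref{P:periodic} gives $|S_n\eta(x)|<C$ for all $x$ and all $n$; in particular $|S_{mk}\eta(p)|<C$ for every $m$. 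Combining, $m\,|S_k\eta(p)|<C$ for all $m\ge 1$, which forces $S_k\eta(p)=0$. That is exactly \eqref{sum_periodic_zero}.

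The hypotheses of Proposition~\ref{P:periodic} are met: we are assuming $\mu_\varphi=\mu_\psi$ and $\int\eta\,d\mu=0$ (after normalizing, or it is assumed in the corollary's statement), so the bounded-coboundary conclusion \eqref{E:per-bounded} applies. The only mild subtlety is that, as the remark preceding the corollary points out, $\pi^{-1}(p)$ need not be a single periodic orbit of $\sigma$, so one cannot directly invoke a ``periodic point'' statement on the symbolic side; but we do not need to — Proposition~\ref{P:periodic} already delivers the uniform bound $|S_n\eta(x)|<C$ directly on $X$ for every $x\in X$, and the telescoping argument above takes place entirely downstairs on $X$.

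I do not anticipate any genuine obstacle here: the corollary is essentially an immediate formal consequence of the uniform boundedness of Birkhoff sums of $\eta$, via the elementary fact that a bounded sequence which grows linearly must vanish identically. The only thing to be careful about is the bookkeeping of the cocycle/telescoping identity $S_{mk}\eta(p)=m\,S_k\eta(p)$, which uses nothing more than the definition $S_n\eta(x)=\sum_{i=0}^{n-1}\eta(f^i(x))$ together with $f^k(p)=p$. So the write-up is short: state the telescoping identity, invoke Proposition~\ref{P:periodic}, and let $m\to\infty$.

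\begin{proof}[Proof of Corollary~\ref{C:periodic-zero}]
Let $p\in X$ satisfy $f^k(p)=p$. For any integer $m\ge 1$, since $f^{jk}(p)=p$ for all $j\ge 0$, the definition of Birkhoff sums yields
\[
S_{mk}\eta(p)=\sum_{i=0}^{mk-1}\eta\big(f^i(p)\big)=\sum_{j=0}^{m-1}\sum_{i=0}^{k-1}\eta\big(f^{jk+i}(p)\big)
=\sum_{j=0}^{m-1} S_k\eta\big(f^{jk}(p)\big)=m\,S_k\eta(p).
\]
By Proposition~\ref{P:periodic}, there is a constant $C>0$ with $|S_n\eta(x)|<C$ for all $x\in X$ and all $n\in\mathbb{N}$; in particular $|S_{mk}\eta(p)|<C$ for every $m\ge 1$. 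Hence $m\,|S_k\eta(p)|<C$ for all $m\ge 1$, which is possible only if $S_k\eta(p)=0$.
\end{proof}
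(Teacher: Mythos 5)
Your proof is correct and follows essentially the same route as the paper: the telescoping identity $S_{mk}\eta(p)=m\,S_k\eta(p)$ combined with the uniform bound from Proposition~\ref{P:periodic} forces $S_k\eta(p)=0$. The paper phrases it as a proof by contradiction, but the argument is identical in substance.
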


\begin{proof}
Suppose $S_k\eta(p) \neq 0$.
Then for any $n$
$$S_{nk}\eta(p)=n\cdot S_k\eta(p),$$
so that the sequence  $(S_{nk} \eta)_{n \geq 0}$ is not bounded, a contradiction.
\end{proof}

Using the above characterization, we can solve the cohomological equation \eqref{E:cohom}.

\begin{proof}[Proof of Proposition~\ref{prop:homology}]
We can assume that the integral of $\eta=\varphi-\psi$ is equal to zero.  As in the classical proof (see \cite{bowen}, proof of Theorem 1.28), begin with choosing a point $x\in X$ with dense trajectory, and define the function $u$ on the set
$$S:=\{f^n(x)\}_{n\ge0}$$
by putting
$$u(x) := 0, \quad u(f^n(x)) :=\eta(x)+\dots +\eta(f^{n-1}(x))\quad \text{for}\quad  n>0.$$
Then, clearly, for every $z\in S$, $z=f^n(x)$
the equation
\begin{equation}\label{eq:homology}
u(f(z))-u(z)=\eta(z)=\varphi(z)-\psi(z)
\end{equation}
holds.

Now, it is enough to prove that the function $u: S \to \mathbb R$ is H\"older continuous, i.e.,  that there exist $C>0$, $\alpha>0$ such that
\begin{equation}\label{eq:holder}
|u(f^k(x)) - u(f^m (x))| <C \rho(f^m(x), f^k(x))^\alpha.
\end{equation}
Then  $u$ extends to a H\"older continuous function on $X$ and  the equation \eqref{eq:homology} extends to $X$.

To prove that $u$ is H\"older continuous, consider two points $z = f^k(x), y = f^m(x)$, $k<m$.
Let $n$ be the largest integer such that $y, z$ are in the same component of $\mathcal U_n$. Denote this component by $U_n$.

Let $l := m -k$ so $y = f^l(z)$, and for $r=1,\dots l$ denote, by induction  by $U_{r+n}$ the connected component of $f^{-1}(U_{r-1+n})$ containing the point $f^{m-r}(x)$. To conclude the proof, we need the following

\begin{lemma}\label{lem:spec}
 There exists a periodic point $p$ in $U_{l+n}$, of period $l$,  such that $f^{l-r}(p)\in U_{r+n}$, $r=0,1,\dots, l$, and
\begin{equation}\label{eq:spec}
|S_l\eta(p)-S_l\eta(z)|<C \rho(z, f^l(z))^\alpha.
\end{equation}
\end{lemma}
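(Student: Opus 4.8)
The plan is to use a specification-type argument (the shadowing/closing lemma available for the expanding model) to produce the desired periodic point, and then to exploit the exponential contraction of the covers together with topological Hölder continuity of $\eta$ to get the estimate \eqref{eq:spec}. First I would observe that, by construction, the nested sequence of components $U_{n}\supseteq$-pullbacks $U_{n+1},\dots,U_{n+l}$ together with the fact that $z=f^k(x)\in U_{l+n}$ via the chain $f^{l-r}(z)\in U_{r+n}$ means we have an ``almost-periodic'' pseudo-orbit of length $l$ whose endpoints $z$ and $f^l(z)$ both lie in the small set $U_n$; since the system is locally eventually onto and the covers $\mathcal U_{n}$ shrink exponentially (Theorem~\ref{T:exp-contr}), there is $N_0$ (depending only on the cover, not on $l$) such that once we are at depth $n\geq N_0$ the usual closing argument applies: inside $U_{n}$ the map $f^{l}$ restricted to the appropriate pullback component is onto $U_{n}$ (or a definite subset of it), so by a degree/connectedness argument it has a fixed point $p\in U_{l+n}$. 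Pulling $p$ back along the same chain of components gives $f^{l-r}(p)\in U_{r+n}$ for all $r=0,1,\dots,l$, which is exactly the orbit-tracking we want.

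The second step is the estimate \eqref{eq:spec}. Here I would write
$$S_l\eta(p)-S_l\eta(z)=\sum_{r=1}^{l}\bigl(\eta(f^{l-r}(p))-\eta(f^{l-r}(z))\bigr),$$
and note that for each $r$ the two points $f^{l-r}(p)$ and $f^{l-r}(z)$ both lie in $U_{r+n}$, an element of the cover $\mathcal U_{r+n}$. Since $\eta=\varphi-\psi$ is a difference of Hölder (hence, by Lemma~\ref{L:topHolder}(1), topologically Hölder) functions, $\mathrm{var}_{\mathcal U_{r+n}}\eta\leq C'e^{-(r+n)\alpha'}$ for suitable constants. Summing the geometric series in $r$ gives
$$|S_l\eta(p)-S_l\eta(z)|\leq \sum_{r=1}^{l}C'e^{-(r+n)\alpha'}\leq C'' e^{-n\alpha'}.$$
Finally, by the maximality of $n$ (the largest integer with $y,z$ in the same component of $\mathcal U_n$) and exponential contraction, $e^{-n\alpha'}$ is comparable to a power of $\rho(z,y)=\rho(f^k(x),f^m(x))$; hence $e^{-n\alpha'}\leq C\,\rho(z,f^l(z))^{\alpha}$ for an appropriate $\alpha>0$, which is \eqref{eq:spec}. (One should be slightly careful that $\rho(z,f^l(z))=\rho(z,y)$ is being measured, and that $n$ large forces this distance to be small, so that raising to a power is legitimate.)

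I expect the main obstacle to be the first step: producing the periodic point $p$ with the full orbit-tracking property $f^{l-r}(p)\in U_{r+n}$ for \emph{all} $r$, uniformly in $l$. The subtlety is that the branched-covering map $f$ need not be injective on the relevant pullback components (there may be critical points), so the ``fixed point in a component mapped over itself'' argument has to be run at the topological level: one needs that the component $U_{l+n}$ is mapped by $f^{l}$ onto a set containing $U_{l+n}$ (or onto $U_n\supseteq U_{l+n}$), which follows from locally-eventually-onto combined with the construction of the pullback chain, and then a connectedness/degree argument (as in the proof of the semiconjugacy, Proposition~\ref{P:semiconjugacy}, or via the lifting Lemma~\ref{L:lift}) yields a fixed point. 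Making sure the constants $N_0$, $C$, $\alpha$ are uniform in $l$ — i.e. that ``$n$ large enough'' depends only on the cover and not on the length of the excursion — is the delicate bookkeeping point, but it is forced by the exponential contraction bound of Theorem~\ref{T:exp-contr} and the fact that the estimate in step two only ever uses depths $\geq n$.
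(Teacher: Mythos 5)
Your overall strategy (close up the orbit segment by a shadowing-type periodic point, then sum a geometric series of variations of $\eta$) is the same in spirit as the paper's, but both of the steps you flag as delicate contain genuine gaps as written. For the periodic point: you never establish the containment $U_{l+n}\subseteq U_n$ (both sets contain $z$, and $U_{l+n}$ is small, but $z$ may sit near $\partial U_n$, so smallness gives no containment), and more seriously, in this setting there is no ``degree/connectedness'' fixed-point principle to invoke: the repellor $X$ is just a compact, locally connected metrizable space, not a disk or manifold, and even on the circle the statement ``a connected set is mapped by $f^l$ onto a set containing it'' does not force a fixed point (the doubling map sends the arc $\{e^{i\theta}:\pi/2\le\theta\le 3\pi/2\}$ onto the whole circle, yet the arc contains no fixed point). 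The paper's proof avoids this entirely: it constructs $p$ by hand as $\lim_{n\to\infty}z_{nl}$, choosing successive preimages $z_s$ of $z$ with $\rho(y_s,z_s)\le e^{-\epsilon}\rho(y_{s-1},z_{s-1})$, which is possible because the \emph{visual} metric satisfies the exact ball-mapping property $f(B(\xi,re^{-\epsilon}))=B(f(\xi),r)$ (\cite[Theorem 3.2.1, Proposition 3.2.2]{HP}); the same contraction makes $(z_{nl})$ Cauchy, yields $f^l(p)=p$, and gives the quantitative shadowing $\rho(f^h(p),f^h(z))\le \tfrac{e^{-\epsilon(l-h)}}{1-e^{-1}}\rho(y,z)$.

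The second gap is your final conversion $e^{-n\alpha'}\le C\rho(z,y)^{\alpha}$. Maximality of $n$ only tells you that $y,z$ do \emph{not} lie in a common element of $\mathcal U_{n+1}$; to deduce a lower bound $\rho(y,z)\gtrsim e^{-n\alpha''}$ you need a lower bound on the Lebesgue number of $\mathcal U_{n+1}$ that decays at most exponentially. Theorem \ref{T:exp-contr} (exponential contraction) gives only the upper bound on diameters, so this comparability is false for a general exponentially contracting metric and must be argued from the specific two-sided estimates of the visual metric of \cite{HP} -- which you assert but do not prove. The paper sidesteps the issue altogether: its shadowing estimates are expressed directly in terms of $\rho(y,z)$, so no translation between cover depth $n$ and distance is ever needed, and the H\"older sum immediately gives $|S_l\eta(p)-S_l\eta(z)|\le C\rho(y,z)^{\beta}$. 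To repair your argument you would either have to import those visual-metric estimates explicitly, or simply run the paper's backward-contraction construction, at which point the fixed-point step disappears as well.
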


From Lemma~\ref{lem:spec} the estimate   \eqref{eq:holder} follows easily. We have:
$$
\aligned
&u(f^m(x))-u(f^k(x))= \eta(f^{k}(x))+\dots + \eta(f^{m-1}(x))\\
&=S_l\eta(z)-S_l\eta(p)
\endaligned
$$
since $S_l\eta(p)=0$ by Corollary \ref{C:periodic-zero}.
By Lemma~\ref{lem:spec},
$$|S_l\eta(p) - S_l\eta(z)|<C \rho(z, f^l(z))^\alpha,$$
so finally
$$|u(f^m(x))-u(f^k(x))|<C \rho(f^k(x),f^m(x))^\alpha$$
and \eqref{eq:holder} holds, completing the proof of Proposition~\ref{prop:homology}.
\end{proof}

\begin{proof}[Proof of Lemma~\ref{lem:spec}]

We use the fact that the map $f$ uniformly expands balls with respect to the visual metric.
Indeed, let us fix $\epsilon > 0$, and let $\rho$ be its corresponding visual metric. Then, by \cite[Theorem 3.2.1]{HP} and \cite[Proposition 3.2.2]{HP},
there exists $r_0 > 0$ such that for any $r < r_0$ and for any $\xi \in X$  one has $f(B(\xi, r e^{-\epsilon})) = B(f(\xi), r)$
(in the language of \cite{HP}, $r_0 = |F(\xi)|_\epsilon$, and that is constant for any $\xi \in \partial_\epsilon \Gamma$, which is our repellor $X$).

Denote $y = f^l(z)$ and assume $\rho(y,z)<r$ sufficiently small so that we can apply \cite[Proposition 3.2.2]{HP}.
Denote $y_s=f^{m-s}(x)$. Choose consecutive preimages $z_s$ of $z$ such that
$\rho(y_s,z_s)\leq e^{-\epsilon} \rho(y_{s-1}, z_{s-1})$,
for $s=0,1, \dots, l = m-k$. Thus, $\rho(z, z_l) \leq e^{-l \epsilon} \rho(y, z)$.

(Notice that we do not choose just any $z_s$ being a  preimage of $z_{s-1}$
in the pullback $U_s$ of $B_{s-1}:=B(y_{s-1}, r e^{-(s-1)\epsilon})$
containing $y_s$ because $U_s$ can be strictly bigger than $B_s$.
We choose $z_s\in B_s$.)

Now, continue choosing preimages $(z_s)$ for $s \geq l$ close to the previously chosen $z_{s}$.
This way, we obtain $\rho(z_s, z_{s+l}) \leq e^{- s \epsilon} \rho(z, z_l)$ for any $s \geq 0$, hence $p := \lim_{n \to \infty} z_{nl}$ is the required
periodic point.

Indeed we have, for any $h$ with $0 \leq h < l$,
\begin{align*}
\rho(f^h(p), f^h(z)) & = \lim_{n \to \infty} \rho(z_{nl-h}, y_{l-h}) \leq  \\
& \leq \rho(z_{l-h}, y_{l-h}) + \sum_{n = 1}^\infty \rho(z_{nl-h}, z_{(n+1)l-h}) \leq \\
& \leq e^{-\epsilon(l-h)} \rho(y, z) + \sum_{n = 1}^\infty e^{-(nl-h)\epsilon} e^{-l \epsilon} \rho(y, z) = \\
& = \frac{e^{-\epsilon(l-h)}}{1-e^{-1}} \rho(y, z)
\end{align*}
hence, using the H\"older continuity of $\eta$, there exist $K, \beta > 0$ such that
$$|S_l \eta(p) - S_l \eta(z)| \leq \sum_{h = 0}^{l-1} K \rho(f^h (p), f^h(z))^\beta \leq K \sum_{h = 0}^{l-1} \left( \frac{e^{-\epsilon(l-h)}}{1-e^{-1}} \rho(y, z) \right)^\beta \leq C \rho(y, z)^\beta$$
with $C := K (1-e^{-1})^{-\beta} (1 - e^{-\beta \epsilon})^{-1}$.
This completes the proof.
\end{proof}

Proposition~\ref{prop:homology} concludes the proof of Theorem~\ref{T:main} \eqref{itm:cohom} in the introduction. Indeed, given any exponentially contracting metric $\rho$ on $X$ there exists a visual
metric $\rho'$ which induces the same topology, and, by \cite[proof of Theorem 3.2.1, page 58]{HP}, such that the identity map $(X, \rho') \to (X, \rho)$
is Lipschitz.

Then, $\varphi$ and $\psi$ are also H\"older continuous with respect to $\rho'$, and we can apply Proposition~\ref{prop:homology} to $\rho'$. Thus, we obtain
$u$ which is H\"older continuous with respect to the visual metric, which means it is continuous with respect to the original metric $\rho$. Note, however, that we do not know anything about the modulus of continuity of $u$.

A similar argument also yields the proof of Theorem~\ref{T:main} \eqref{itm:sigma0}.
If $\sigma = 0$ and we normalize $\varphi$ so that $\int \varphi \ d\mu = 0$, then by \cite[Lemma 1]{PUZ} we have
$$|S_n (\varphi \circ \pi) (\omega)|<C$$
for any $\omega \in \Sigma$ and any $n \geq 0$, hence by surjectivity also
$$|S_n \varphi (x)|<C$$
for any $x \in X$ and any $n \geq 0$.
Now, as in the proof of Proposition~\ref{prop:homology} (with $\psi = 0$) there exists $u : X \to \mathbb{R}$ which is H\"older continuous with respect to the visual metric on $X$ such that
$$\varphi = u \circ f - u.$$
Thus, $u$ is continuous with respect to the original metric, completing the proof.

Finally, the proofs of Theorem~\ref{T:main-sphere} \eqref{itm:sigma0} and \eqref{itm:cohom} proceed exactly in the same way, by considering a visual metric on $X$ and running the same argument as in the proof of Proposition~\ref{P:periodic} and Proposition~\ref{prop:homology}, just replacing $\pi$ with $\pi \circ \Pi$ where $\Pi : \Sigma \to Y$
is the coding map, $Y$ is the repellor for the blown up space as in Section \ref{S:periodic-crit}, and $\pi : Y \to X$ is as constructed in Proposition~\ref{P:blowup}.

\section{Appendix A. Exponential contraction}

Let us now present a direct construction of an exponentially contracting metric on the blown-up repellor $Y$, 
alternative to the approach via Frink's lemma (Lemma \ref{L:Frink-Y}). 

Let $(X, \rho)$ be the repellor with a metric $\rho$ which is exponentially contracting with respect to a cover $\mathcal{U}$, let $Y$
be the repellor in the blown up space, let $\pi : Y \to X$ be the blowdown map and let $g : Y \to Y$ be the dynamics on $Y$, so that $\pi \circ g = f \circ \pi$.

Let us assume for simplicity that there is only one periodic critical point $p$ in $X$, that $f(p) = p$, and there is no other critical point in the grand orbit of $p$. Let $d$ be the local degree of $f$ at $p$.

According to Lemma \ref{L:conj}, let $U$ be a neighborhood of $p$ in $X$ which is homeomorphic to the unit disc and such that $f : U \cap f^{-1}(U) \to U$ is topologically conjugate to $h(r e^{i\theta}) = \lambda r e^{i d \theta}$ for some $\lambda > 1$.

In the blown up space $Y$, the disc $U$ is replaced by an annulus $A$ which is homeomorphic to $[0, 1) \times \mathbb{R}/\mathbb{Z}$,
and the map $g$ is topologically conjugate to $(r, \theta) \mapsto (\lambda r, d \theta)$. Let $p: A \to \mathbb{R}/\mathbb{Z}$ be the projection to
the angular coordinate.

Since we know a local model for $g$ on $A$, we can define in polar coordinates for each $n \geq 0$ the annulus
$A_n := \{ (r, \theta) \in A \ : \ \lambda^{-n-2} < r < \lambda^{-n} \}$. Then we know that $g(A_{n+1}) = A_n$.  Moreover, let
$B_n := \{ (r, \theta) \in A \ : \ r < \lambda^{-n} \}$.

(Note that all our metrics are only defined on $X$. When we write $\textup{diam}_\rho(A)$, we mean $\textup{diam}_\rho(A \cap X)$.)
By replacing $f$ with a power, we can assume that
\begin{equation}
\label{E:diam}
\textup{diam}_\rho(A_1) < \rho(\partial A_1, Y \setminus A).
\end{equation}
Now, let us define for $0 \leq j \leq d^{n+1} - 1$ the open sets
$$A_{n, j} := \{ (r, \theta) \in A \ : \ \lambda^{-n-2} < r < \lambda^{-n} \textup{ and } j/d^{n+1} < \theta/2\pi < (j+2)/d^{n+1} \}.$$
Note that for any $n > 0$ and any $j$ the map $g^n$ is a homeomorphism from $A_{n, j}$ to some $A_{0, r}$ for $0 \leq r \leq d -1$.
Moreover, let $A^* := Y \setminus \overline{B_1}$, which is an open neighborhood of the complement of $A$ in $Y$.

Let $\rho$ be an exponentially contracting metric on $X$, with respect to an open cover $\mathcal{U}$. Consider the projection map $\pi : Y \to X$, and let $\rho_1(x, y) := \rho(\pi(x), \pi(y))$ the pullback of the metric $\rho$ on $Y$.

On each open set $A_{n,j}$ let us define the pseudo-metric (for $x, y \in Y$)
$$\rho_{A_{n,j}}(x, y) := \lambda^{-n} \rho_1(g^n x, g^n y)$$
with $\lambda = d$. Moreover, on $A^*$ define $\rho_{A^*}(x, y) := 0$. Let $\mathcal{A}$ be the cover which is the union of the $A_{n,j}$ and $A^*$.
Now, given $x,y$, we say an \emph{admissible chain} $\gamma$ between $x, y$ is a sequence $(x_i)_{i = 0}^k$ of points such that
$x = x_0$, $y = x_k$ and for any $0 \leq i \leq k-1$ both points $x_i$ and $x_{i+1}$ lie in the same element $\alpha_i$ of $\mathcal{A}$.
We define the \emph{length} of such an admissible chain as
$$L(\gamma) := \sum_{i = 0}^{k-1} \rho_{\alpha_i}(x_i, x_{i+1}).$$
Finally, we define the pseudo-metric $\rho_A$ on $Y \setminus S_p$ as
$$\rho_A(x,y) := \inf_{\gamma \in \Gamma_{x,y}} L(\gamma)$$
where $\Gamma_{x,y}$ is the set of all admissible chains between $x$ and $y$. Finally, extend $\rho_A$ to $S_p$ by taking its completion.

\medskip
\noindent \textbf{Properties of $\rho_A$.} Since every concatenation of admissible chains is admissible, $\rho_A$ satisfies the triangle inequality.

Moreover, if $x, y \in A_{n,j}$, then by definition
\begin{equation} \label{E:disjoint}
\rho_A(x, y) \leq \rho_{A_{n,j}}(x, y).
\end{equation}

Finally, if $V_1 \subseteq B_2$ is connected and $V_0 = g(V_1) \subseteq B_1$, then
\begin{equation}
\label{E:contr}
\textup{diam}_{\rho_A}(V_1) \leq \lambda^{-1} \textup{diam}_{\rho_A}(V_0).
\end{equation}

\begin{proof}
Let $x, y \in V_0$ and $x', y' \in V_1$ with $g(x') = x, g(y') = y$. Now, fix $\epsilon > 0$ and consider an admissible chain $\gamma$ given by $x = x_0, \dots, x_{k-1}, x_k = y$ between $x$ and $y$,
and such that
$$L(\gamma) \leq \rho_A(x, y) + \epsilon.$$
By \eqref{E:diam}, for $\epsilon$ small such a chain lies entirely in $A$. Hence, by lifting $x_i$ we find a chain
$(y_i)_{i = 0}^k$ with $g(y_i) = x_i$. Then each pair $y_i, y_{i+1}$ lies in a set $A_{n_i + 1, j'_i}$ with $g(A_{n_i + 1, j'_i}) = A_{n_i, j_i}$,
and by definition
$$\rho_{n_i+1, j'_i}(z, w) = \lambda^{-1} \rho_{n_i, j_i}(g(z), g(w)) \qquad \forall z, w \in A_{n_i + 1, j'_i}$$
hence
$$\rho_A(x', y') \leq \sum_{i = 0}^{k-1} \rho_{n_i+1, j'_i}(y_i, y_{i+1}) = \lambda^{-1} \sum_{i = 0}^{k-1}\rho_{n_i, j_i}(x_i, x_{i+1}) \leq
\lambda^{-1} \rho_A(x, y) + \epsilon$$
and the claim follows as $\epsilon \to 0$.
\end{proof}

(Note that \eqref{E:diam} is necessary since the shortest admissible chain between $x$ and $y$ might not lie entirely in $A$,
using a ``shortcut" through the complement of $A$, where $\rho_A$ is zero).

Now, for any $q$ in the grand orbit of $p$, let $k$ be the smallest integer such that $f^k(q) = p$. Let $A(q)$ be the pullback of $A$ by $g^k$.
Define for each $k$ the set $E_k := f^{-k}(p) \setminus f^{-k+1}(p)$, and let $q \in E_k$. If $g^k : A(q) \to A$ is injective,
we define
the pseudo-metric
$$\rho_{A(q)}(x, y) := \rho_A(g^k x, g^k y)$$
which is supported on $A(q)$, and given by pulling back the metric $\rho_A$ on $A = A(p)$.
If $g^k : A(q) \to A$ is not injective, we define
$$\rho_{A(q)}(x, y) := \inf \left( \sum_{i = 1}^r \rho_A(g^k x_i, g^k x_{i+1}) \right)$$
where the infimum is taken over all finite sets $x = x_0, x_1, \dots, x_r = y$ of points such that $x_i$ and $x_{i+1}$ are both contained
in a connected, open set over which $g^k$ is injective.

\medskip
\noindent \textbf{Definition of the metric $\widetilde{\rho}$.} We finally define the metric on $Y$ to be
\begin{equation} \label{E:def-rho}
\widetilde{\rho}(x, y) := \rho_1(x, y) + \sum_{n \geq 0} \sum_{q \in E_n} c^n \rho_{A(q)}(x, y)
\end{equation}
where $c < 1$ is a constant which will be determined later.

Let $\mathcal{U}_n$ be the collection of all connected components of preimages $f^{-n}(U)$ for $U \in \mathcal{U}$.
Since $\rho$ is exponentially contracting, then there exist $C_1, \alpha_1 > 0$ such that
\begin{equation} \label{E:exp-U}
\sup_{U \in \mathcal{U}_n} \textup{diam}_{\rho}(U) \leq C_1 e^{-n \alpha_1}.
\end{equation}
By substituting $\mathcal{U}$ with $\mathcal{U}_{n_0}$ for some $n_0$, we can assume $\textup{diam}_{\rho}(U) < r$ for any $U \in \mathcal{U}_n$
and any $n$.

Choose $k > 0$ such that $\textup{diam}_{\rho_1} A_k < r$.
Moreover,
denote $W_p^{(1)} := N_\epsilon( (r, \theta) \in A \ : \ 0 \leq r \leq \lambda^{-k}, 0 \leq \theta \leq 1/2)$ and $W_p^{(2)} := N_\epsilon( (r, \theta) \in A \ : \
0 \leq r \leq \lambda^{-k}, 1/2 \leq \theta \leq 1)$
where $N_\epsilon$ denotes the $\epsilon$-neighborhood in the metric $\widetilde{\rho}$ and $\epsilon > 0$ is small.

Let us consider the open cover $\mathcal{V} := \{ \pi^{-1}(U) \ : \ U \in \mathcal{U} \} \cup W_p^{(1)} \cup W_p^{(2)}$. We want to
prove that $\widetilde{\rho}$ is exponentially contracting with respect to the cover $\mathcal{V}$.  Let $\mathcal{V}_n$ denote
the set of connected components of the preimages $g^{-n}(V)$ of elements $V$ of $\mathcal{V}$.

\begin{proposition}
There exist constants $C, \alpha > 0$ such that for any $V \in \mathcal{V}_n$ we have
$$\textup{diam}_{\widetilde{\rho}} (V) \leq C e^{-n \alpha}.$$
\end{proposition}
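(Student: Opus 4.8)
The plan is to estimate $\textup{diam}_{\widetilde\rho}(V)$ for $V\in\mathcal{V}_n$ by treating separately each pseudo-metric in \eqref{E:def-rho}: since $\widetilde\rho$ is a sum of pseudo-metrics we have $\textup{diam}_{\widetilde\rho}(V)\le\textup{diam}_{\rho_1}(V)+\sum_{m\ge0}\sum_{q\in E_m}c^m\,\textup{diam}_{\rho_{A(q)}}(V)$, so it is enough to bound each summand. First I would observe that $V$ is a connected component either of $g^{-n}(\pi^{-1}(U))$ for some $U\in\mathcal{U}$ --- which we may assume not to contain $p$, the element of $\mathcal U$ around $p$ having been replaced by $W_p^{(1)},W_p^{(2)}$, so that no element of $\mathcal V$ contains a whole neighbourhood of a circle $S_q$ --- or of $g^{-n}(W_p^{(i)})$. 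In the first case $\pi(V)$ lies in an element of $\mathcal{U}_n$; in the second, since $\pi(W_p^{(i)})\subseteq h(B(0,\lambda^{-k}))\in\mathcal{U}_k$, it lies in an element of $\mathcal{U}_{n+k}$. Either way \eqref{E:exp-U} gives $\textup{diam}_{\rho_1}(V)=\textup{diam}_\rho(\pi(V))\le C_1e^{-\alpha_1 n}$.

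For the annulus terms I would note first that $\rho_{A(q)}$ vanishes on $V$ unless $V$ reaches into the core $B_1$ of the $q$-th annulus, for otherwise $g^m(V\cap A(q))\subseteq A\setminus\overline{B_1}=A^*$ and $\rho_{A^*}\equiv0$. When it does reach in, $\textup{diam}_{\rho_{A(q)}}(V)\le\textup{diam}_{\rho_A}(g^m(V\cap A(q)))$, and the heart of the matter is the uniform bound
$$\textup{diam}_{\rho_A}\big(g^m(V\cap A(q))\big)\le C'\mu^{\,n-m},\qquad \mu:=\max(\lambda^{-1},e^{-\alpha_1})<1,$$
for $0\le m\le n$ (for $m>n$ a cruder estimate, using $\textup{diam}_{\rho_A}(A)<\infty$ and that $V$ meets at most $e^{O(m-n)}$ of the $A(q)$ at level $m$, contributes only $O(c^n)$ to the sum when $c$ is small). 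To prove the bound I would push the connected set $S:=g^m(V\cap A(q))$ forward inside the chart around $p$: if $S$ sits at radial depth $t$, each of the next $\approx t$ applications of $g$ multiplies its $\rho_A$-diameter by $\lambda^{-1}$ via the self-similar contraction \eqref{E:contr}, bringing it to radial depth $O(1)$; there, using that $S$ --- a component of a $g^{-(n-m)}$-preimage that exits the chart after $\approx t$ steps --- has angular width $O(d^{-t})$, so that its pushforward has bounded angular width and meets only $O(1)$ of the cells $A_{0,j}$, on which $\rho_{A_{0,j}}$ equals $\rho_1$, one bounds its $\rho_A$-diameter by $O(1)\,\textup{diam}_{\rho_1}(\,\cdot\,)=O(1)\,\textup{diam}_\rho(f^{\,m+t}(\pi(V)))\le O(1)\,C_1e^{-\alpha_1(n-m-t)}$ again by \eqref{E:exp-U}. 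Multiplying and using $(\lambda^{-1})^{t}(e^{-\alpha_1})^{n-m-t}\le\mu^{\,n-m}$ gives the claim.

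It then remains to sum. Since the sets $A(q)$ with $q\in E_m$ are pairwise disjoint and the covers $\mathcal{U}_m$ have multiplicity bounded uniformly in $m$, a connected $V$ meets at most a fixed number $N$ of them at each level $m\le n$, so
$$\textup{diam}_{\widetilde\rho}(V)\ \le\ C_1e^{-\alpha_1 n}+NC'\sum_{m=0}^{n}c^m\mu^{\,n-m}\ =\ C_1e^{-\alpha_1 n}+NC'\mu^{\,n}\sum_{m=0}^{n}(c/\mu)^m.$$
If $c<\mu$ the geometric sum is bounded and the right side is $O(\mu^n)$; if $c\ge\mu$ one bounds $\sum_{m=0}^n(c/\mu)^m\le(n+1)(c/\mu)^n$ and the right side is $O((n+1)c^n)$. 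Together with the $O(c^n)$ tail from $m>n$, this gives $\textup{diam}_{\widetilde\rho}(V)\le Ce^{-\alpha n}$ for a suitable $\alpha>0$, as claimed; incidentally the argument shows that any $c<1$ is admissible in \eqref{E:def-rho}, provided $c$ is taken small enough (e.g.\ $c<\mu/\textup{deg}(f)$) for the summation to absorb the multiplicities involved.

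The step I expect to be the main obstacle is the uniform estimate on $\textup{diam}_{\rho_A}(g^m(V\cap A(q)))$: it forces one to reconcile the exact self-similarity \eqref{E:contr}, which helps only deep inside a core annulus, with the exponential shrinking of the original metric $\rho$, which helps only away from the blown-up part, and in particular to handle the regime where $V$ lies near a circle $S_q$ but its forward orbit leaves that core after only boundedly many steps --- there \eqref{E:contr} alone yields merely the useless bound $\le\lambda^{-O(1)}\textup{diam}_{\rho_A}(A)$, and one must instead exploit that the shallow part of a core carries a pseudo-metric comparable to $\rho_1$, together with the angular thinness of preimage components near a circle. A subsidiary point, needed just to get started, is that the half-annuli $W_p^{(1)},W_p^{(2)}$ and their preimages (of angular width $O(d^{-n})$) are precisely what keeps a component of $\mathcal{V}_n$ from containing a macroscopic arc of a circle $S_q$, on which $\widetilde\rho$ coincides with the non-degenerate metric $\rho_A$ and could not contract.
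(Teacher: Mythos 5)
Your proposal is correct and follows essentially the same route as the paper: bound the $\rho_1$-part by exponential contraction of $\rho$; bound the $\rho_A$-diameter at level $n-m$ by combining the self-similar contraction \eqref{E:contr} in the deep annuli with the comparison to $\rho_1$ via \eqref{E:disjoint} in the shallow ones (the paper organizes this as an induction over $\mathcal{V}_n$, you track the radial depth directly); push forward by $g^m$ for the $\rho_{A(q)}$-terms; and absorb degree/multiplicity factors by taking $c$ small. The only bookkeeping differences are that the paper works with the connected ancestor $V_{n-k}\in\mathcal{V}_{n-k}$ rather than the possibly disconnected set $g^m(V\cap A(q))$ (which is what makes \eqref{E:contr} directly applicable), and keeps the explicit factor $d^k$ from the chain definition of $\rho_{A(q)}$ together with the crude count $|E_k|\le d^k$ instead of your bounded-multiplicity claim -- all of which is absorbed, as in your closing remark, by the choice $c\,d^2 e^{\alpha_2}<1$.
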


\begin{proof}

The proof is given in several steps.

\textbf{Step 1.} First of all, since $\rho$ is exponentially contracting and $\mathcal{V}$ is a refinement of $\pi^{-1}(\mathcal{U})$, we obtain
\begin{equation} \label{E:exp-rho1-V}
\sup_{V \in \mathcal{V}_n} \textup{diam}_{\rho_1}(V) \leq C_1 e^{-n \alpha_1}.
\end{equation}

\textbf{Step 2.} Now, we prove by induction that there exist $C_2, \alpha_2 > 0$ such that for each $V \in \mathcal{V}_n$,
\begin{equation} \label{E:exp-rhoA-V}
\textup{diam}_{\rho_A} (V) \leq C_2 e^{-n \alpha_2}
\end{equation}
and $p(V \cap A_0) \neq \mathbb{R}/\mathbb{Z}$.
The case $n = 0$ is trivial, as one can just choose the appropriate $C_2$ since the cover $\mathcal{V}$ is finite.

Now, let $V \in \mathcal{V}_n$ and $V' = g(V) \in \mathcal{V}_{n-1}$. There are two cases.
\begin{enumerate}
\item

Let us suppose that $V \subseteq B_2$, so that $V' \subseteq B_1$.
Then by \eqref{E:contr}  we obtain
$$\textup{diam}_{\rho_{A}} (V) \leq \lambda^{-1} \textup{diam}_{\rho_{A}} (V') \leq \lambda^{-1} C_2 e^{-(n-1) \alpha_2} \leq C_2 e^{-n \alpha_2}$$
by taking $\alpha_2 < \log \lambda$.
\item
In the other case, $V$ must be disjoint from $B_{k_0}$ with $k_0 = 3$, hence by \eqref{E:disjoint} and \eqref{E:exp-rho1-V},
for any $k \leq k_0$ we have
$$\textup{diam}_{\rho_{A}} (V \cap A_{k, j}) \leq \textup{diam}_{\rho_{k,j}} (V \cap A_{k,j}) = $$
$$ =  \lambda^{-k} \textup{diam}_{\rho_1}(g^k(V) \cap A_{0, r})  \leq \lambda^{-k} \textup{diam}_{\rho_1}(g^k(V)) \leq \lambda^{-k} C_1 e^{-(n-k) \alpha_1}$$
hence, by considering all $A_{k, j}$ for $k \leq k_0$,
$$\textup{diam}_{\rho_{A}} (V) \leq d^{k_0} \lambda^{-k} C_1 e^{-(n-k) \alpha_1}$$
which is less than $C_2 e^{-n \alpha_2}$ if one takes  $\alpha_2 < \alpha_1$.
\end{enumerate}

\medskip

\textbf{Step 3.} Now, let us consider all other contributions to the sum in \eqref{E:def-rho}, i.e. the terms relative to $\rho_{A(q)}$ for $q \neq p$.
Consider $V \in \mathcal{V}_n$, and let $V_0, V_1, \dots, V_{n-1}, V_n = V$ be a chain of open sets with $V_k \in \mathcal{V}_k$ and $V_{k}$
the connected component of $g^{-1}(V_{k-1})$.

Consider $q \in E_k$. Then, since the degree of $g^k$ is at most $d^k$, where $d$ is the global degree of $f$, by definition of $\rho_{A(q)}$ we have
$$\textup{diam}_{\rho_{A(q)}}(V) \leq d^k \textup{diam}_{\rho_A}(V_{n-k}) \leq C_2 d^k e^{-(n-k) \alpha_2}$$
Note moreover that the set $f^{-k}(p)$ contains at most $d^k$ elements.
Thus, we have for $k \leq n$,
\begin{equation} \label{E:first-part}
\sum_{k = 0}^n  \sum_{q \in E_n} c^k \textup{diam}_{\rho_{A(q)}} (V)  \leq  \sum_{k = 0}^n c^{k} d^{2 k} C_2  e^{-(n-k) \alpha_2} \leq
C_2 e^{-n \alpha_2} \sum_{k = 0}^n (c d^2 e^{\alpha_2})^k
\end{equation}
which is exponentially bounded if $cd^2 e^{\alpha_2} < 1$.

Finally, let $M := \textup{diam}_{\rho_A}(Y)$. Then
\begin{equation} \label{E:second-part}
\sum_{k > n } \sum_{q \in E_n} c^k \textup{diam}_{\rho_{A(q)}} (V)  \leq \sum_{k > n} c^k d^{2k} M \leq C_4 (cd^2)^n
\end{equation}
which is also exponentially bounded if $cd^2 < 1$.

The claim now follows by combining \eqref{E:exp-rho1-V}, \eqref{E:first-part} and \eqref{E:second-part}, by choosing an appropriate value of $c < 1$.
\end{proof}

\section{Appendix B. Embedding into the sphere}

\begin{proposition}
The weakly coarse expanding system $g:\widetilde W_1 \to \widetilde W_0$ in the assertion of Proposition \ref{P:blowup} can be continuously embedded into the sphere $S^2$, where $Y$ becomes a repellor for the extended system.
More precisely, there exist a continuous embedding $\iota : \widetilde{W}_0 \to S^2$, a continuous map $g' : S^2 \to S^2$ with $g' \circ \iota = \iota \circ g$ and an open set $W_0'$ which contains $\iota(\widetilde{W}_0)$ so that $\iota(Y) = \bigcap_{n \geq 0} (g')^{-n}(W_0')$.
\end{proposition}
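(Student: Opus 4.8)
The plan is to realize the abstract blow-up of Proposition~\ref{P:blowup} concretely inside the sphere: rather than adjoining to each $q\in\mathcal{E}$ an abstract circle $S_q$, I will \emph{delete} from $S^2$ a small open topological disk $D_q^\circ$ centred at $q$ and let the boundary circle $\partial D_q$ play the role of $S_q$. What makes this work is the local model of Lemma~\ref{L:conj}: near a periodic critical point $p$ the map $f$ is, in the chart $h_p\colon\overline{\D}\to\overline{U_p}$, \emph{exactly} the linear model $re^{i\theta}\mapsto\lambda_p r e^{id_p\theta}$, so the blown-up dynamics near $S_p$ is given by explicit formulae which one can match across $\partial D_p$ when filling the hole back in.

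First I would choose a dynamically compatible family of disks $\{D_q\}_{q\in\mathcal{E}}$. For a periodic critical point $p$ set $D_p:=h_p(\{|w|\le\frac12\})$; having chosen $D_{q'}$, for each $q\in f^{-1}(q')$ let $D_q$ be the connected component of $f^{-1}(D_{q'})$ containing $q$, so that $f\colon\partial D_q\to\partial D_{q'}$ has degree $\deg(f;q)$. By Theorem~\ref{T:exp-contr} the disks with $q$ deep in the grand orbit have diameter tending to $0$ exponentially fast; after shrinking the finitely many ``initial'' disks and discarding overlaps we may assume the $\overline{D_q}$ are pairwise disjoint, are contained in $W_1$, and form a null family (locally finite off $\overline{\mathcal{E}}$). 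I then define $\iota\colon\widetilde W_0\to S^2$ to agree with the projection $\pi$ (the identity on $W_0\setminus\mathcal{E}$) outside $\bigcup_q D_q$, and inside each $D_q$ to push $D_q\setminus\{q\}$ radially \emph{outward} onto the half-open annulus $D_q\setminus D_q^\circ$ — in chart coordinates around $p$, by $h_p(w)\mapsto h_p\bigl((\frac12+\frac{|w|}2)\frac{w}{|w|}\bigr)$ — so that the blow-up circle $S_q$ is carried homeomorphically onto $\partial D_q$. Then $\iota$ is a continuous injection; since $\overline{\widetilde W_1}$ is compact (as shown in the proof of Proposition~\ref{P:blowup}), $\iota$ restricted to it is a homeomorphism onto its image, so $\iota$ is an embedding, with $\iota(\widetilde W_0)=W_0\setminus\bigcup_q D_q^\circ$.

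Next I would build $g'\colon S^2\to S^2$. On $\iota(\widetilde W_1)=W_1\setminus\bigcup_q D_q^\circ$ the identity $g'\circ\iota=\iota\circ g$ forces $g':=\iota\circ g\circ\iota^{-1}$, which is continuous there. On each deleted disk $D_q^\circ$ I extend $g'$ by an explicit branched covering compatible with the circle map $g'|_{\partial D_q}$: for a fixed critical point $p$ of local degree $d=d_p$, in chart coordinates the map $w\mapsto(\frac12)^{1-d}w^{d}$ sends $D_p^\circ=\{|w|<\frac12\}$ onto itself with a single branch point of degree $d$ and restricts on $\{|w|=\frac12\}$ to $\frac12 e^{i\theta}\mapsto\frac12 e^{id\theta}$, and a short computation with the linear model of Lemma~\ref{L:conj} shows this agrees with the one-sided limit of $\iota\circ g\circ\iota^{-1}$ coming from the side of $\partial D_p$ outside $D_p$; the remaining holes are filled by transporting this along the grand orbit. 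Finally I extend $g'$ continuously over the compact surface-with-boundary $S^2\setminus W_1$ — there is no obstruction since $\pi_1(S^2)=0$ — choosing the extension so that the ``escape region'' $W_0\setminus\overline{W_1}$ is driven towards $\partial W_0$ and then out of $W_0$ into the interior of a very small hole.

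It remains to choose $W_0'$ and verify $\iota(Y)=\bigcap_{n\ge0}(g')^{-n}(W_0')$. Take $W_0'$ to be $\iota(\widetilde W_0)=W_0\setminus\bigcup_q D_q^\circ$ enlarged by a thin inward collar $\{\frac12-\epsilon<|w|_q<\frac12\}$ in each hole; then $W_0'$ is open in $S^2$, contained in $W_0$, and contains $\iota(\widetilde W_0)$. Since $Y=\pi^{-1}(X)=\bigcap_n g^{-n}(\widetilde W_1)$ and $\iota$ is an embedding with $g'\circ\iota=\iota\circ g$, one has $\iota(Y)\subseteq\bigcap_n(g')^{-n}(W_0')$. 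For the reverse inclusion: a point of a hole $D_q^\circ$ lies either in its deleted core $\{|w|_q\le\frac12-\epsilon\}$, which is forward-invariant and disjoint from $W_0'$, or in the thin collar, and because $\deg(f;q)\ge2$ at a periodic branch point the branched-cover formula \emph{expands} this collar (the fixed point $\{|w|=\frac12\}$ is repelling, iterating $w\mapsto(\frac12)^{1-d}w^{d}$ moves $\frac12-\delta$ to a smaller radius), so every collar point leaves $W_0'$ in finitely many steps; a point of $W_0\setminus\overline{W_1}$ escapes $W_0'$ by the arranged extension of $g'$ over $S^2\setminus W_1$; and a point $z\in\iota(\widetilde W_0)$ with $\iota^{-1}(z)\notin Y$ has a $g$-orbit leaving $\widetilde W_1$, so $(g')^k(z)$ enters $(W_0\setminus W_1)\setminus\bigcup_q D_q^\circ$ for some $k$ and then escapes. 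Hence $\bigcap_n(g')^{-n}(W_0')=\iota(Y)$. The main obstacle I expect is precisely this last bookkeeping: one must simultaneously keep the deleted disks pairwise disjoint and exponentially shrinking, match the filled-in branched covers to the collar dynamics continuously, and design the extension of $g'$ over $S^2\setminus W_1$ so that nothing outside $\iota(Y)$ survives all backward iterates — and it is exactly the hypothesis that the local degree is $\ge2$ at periodic branch points that forces the filled-in holes to be repelling rather than attracting.
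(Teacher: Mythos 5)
Your construction breaks down at its very first step: you cannot choose pairwise disjoint closed discs $\overline{D_q}$, one about each $q\in\mathcal{E}$, inside the original sphere. Since the system is locally eventually onto, every relatively open subset of $X$ maps onto $X$ under some iterate, so the grand orbit $\mathcal{E}=\bigcup_{n\ge0}f^{-n}(\mathcal{C})$ is \emph{dense} in $X$, and $X$ is perfect. Hence any disc $D_q$ with $q$ in its interior meets $X$ in a relatively open set and therefore contains infinitely many other points $q'\in\mathcal{E}$; since $D_{q'}$ also contains $q'$, we get $D_q\cap D_{q'}\neq\emptyset$. ``Discarding overlaps'' is not available to you, because every point of $\mathcal{E}$ is blown up in $\widetilde W_0$ and so needs its own boundary circle in the image of $\iota$; for the same reason the radial pushes around nearby points of $\mathcal{E}$ interfere with one another, so the map $\iota$ as you describe it is not even well defined. (A smaller issue: your radial formula moves the whole punctured chart, not just $D_q\setminus\{q\}$, so it is not the identity off $\bigcup_q D_q$; but that is cosmetic next to the disjointness problem.)

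This density is precisely the difficulty the paper's proof is organized around, and it takes a genuinely different route: it never selects discs around the points of $\mathcal{E}$ in the original sphere. Instead it blows up one point at a time, each step being a small perturbation of the previous embedding that inserts a new complementary disc $D'(q_j)$, with the spherical diameters of the inserted discs and the amount by which consecutive embeddings differ both forced to shrink rapidly, so that the embeddings form a Cauchy sequence; the limit embeds the blown-up space onto a Sierpi\'nski-carpet-type set, and only afterwards is $g$ extended over the inserted discs (by the formula symmetric to the one in Lemma \ref{L:conj}, making the inserted discs the basin of an attracting fixed point), which is what makes $Y$ the repellor of the extension. Any repair of your argument would have to introduce such a limiting/successive-surgery scheme, at which point it becomes the paper's proof. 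Two further points: your closing remark that local degree $\ge2$ makes the filled-in holes ``repelling rather than attracting'' is backwards for your own formula $w\mapsto(1/2)^{1-d}w^{d}$ — the centre of each filled hole is a superattracting fixed point and it is the boundary circle that repels into the hole (which is in fact what one wants, and is the same mechanism as in the paper) — and the extension of $g'$ over $S^2\setminus W_1$ ``with no obstruction since $\pi_1(S^2)=0$'' is not an argument; one must actually construct an extension whose orbits leave $W_0'$, as the paper does by making the complementary discs a basin of attraction.
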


\begin{proof}
For simplicity, let us assume that there is only one periodic critical point, and that it is actually fixed.
Let $p$ be the fixed point for $f$, and note that $D(p) :=S^2\setminus\{p\}$ is homeomorphic to the unit disc $\D$,
and, if $U = h(\D)$ is the range of the chart constructed in Lemma \ref{L:conj}, then $U \setminus \{p\}$ is homeomorphic to an annulus.
Thus, let us define a homeomorphism $H: \D \to D(p)$, which we can choose so that
it maps the annulus $\{ z \in \D \ : \ |z|>1/2\}$ onto $U \setminus \{ p \}$.

Now, we replace $p$ by a circle $S_p$ and add an open disc $D'(p)$, which we parameterize by $H' : \widehat{\mathbb{C}} \setminus \overline{\D} \to D'(p)$
and we identify to $D(p)$ by some homeomorphism $\phi: D(p) \to D'(p)$ so that $\phi \circ H(z) = H'(1/ \bar z)$ for any $z \in \D$.

Then, we construct the space
$$
\hat S_p:=D(p) \sqcup S_p \sqcup D'(p)
$$
and define a topology on it by the standard neighborhoods
of points in $D(p)$ and $D'(p)$ and by
$$
V_{\epsilon, \alpha,\beta}:=\{(r,\theta): 1-\epsilon<r<1+\epsilon, \alpha<\theta<\beta\}
$$
for any point $(1,\theta)\in S_p$ (note that here $S_p$ is characterized by $r = 1$, differently from Lemma \ref{L:conj}).
Then the polar coordinates from $H, H'$ glue along $|z|=1$ to yield a homeomorphism $\hat{H}_p: S^2\to \hat S_p$.

\medskip

We conclude that  $D(p) \sqcup S_p$ with the topology defined in Section \ref{S:periodic-crit} embeds by $\hat{H}^{-1}_p$ into $S^2$ with an open hole (open disc) removed.

Similarly, we consecutively blow up the points $q$ in the grand orbit $O(p)$ to open discs, each time embedding the result in $S^2$.  
If $m= m(q)$ is the least integer such that $f^m(q)=p$ and $f^m$ has local degree $1$ at $q$, we use the chart $h_q = f^{-m}\circ h$, 
extended symmetrically to cover $D'(q)$; if the local degree is higher, we apply adequate roots. (See also the 
compatible system of charts from Section \ref{S:periodic-crit}, Remark \ref{R:2}).


\medskip

Consider $S^2\setminus O^-(p)$, 
where $O^-(p)$ is the grand orbit of $p$. We arrange all $q\in O^-(p)$ different from $p$
into a sequence $(q_j)$ and perform a sequence of embeddings of $S^2\setminus O^-(p)$ into $S^2$, blowing up $q$'s as above
(the order need not be compatible with $m(q)$; we may forget about the dynamics).

We care that the complementary  $D'(q_j)$ have diameters
in the spherical metric in $S^2$ quickly shrinking to $0$ and  that
consecutive embeddings differ from the preceding ones
on neighbourhoods of $q$ of  diameters also quickly shrinking to $0$, so that they form a Cauchy sequence.
So the limit embedding $\iota$ exists and extends to the closure
$$
\widetilde{S} =\Bigl(S^2\setminus O^-(p)\Bigr) \sqcup \bigsqcup_{q\in O^-(p)} S_q
$$
with image
$S^2\setminus \bigcup_{q\in O(p)}D'(q)$, which is a Sierpi\'nski carpet.

Crucially, it is possible to perform consecutive perturbations of the embedding not changing the embedding close to the previous $q_i$'s, i.e. not moving already constructed $D'(q)$'s.

\medskip

Notice finally that we can extend $g$ from a neighborhood $\widetilde{W}_0$ of $Y$ in $\widetilde S$ to neighbourhoods of $\iota(S_q)$
in the image $S^2$, using in $D'(p)$ the formula symmetric to the one in Lemma \ref{L:conj}.  We proceed similarly at
$q\in f^{-m}(p)$ using the charts $h_q$. So $Y$ becomes a repeller in $S^2$.

In fact the extension can be easily defined on all the $D'(q)$'s, so that $D'(p)$ is the immediate basin of 
attraction to an attracting fixed point and $\bigcup_{q\in O^-(p)} D'(q)$ the full basin of attraction.


\end{proof}

\end{document}